\documentclass[11pt,onecolumn]{article}
\usepackage[utf8]{inputenc}
\usepackage{latexsym}
\usepackage{framed,multirow,amsmath,amsthm,amssymb,amsfonts}
\usepackage[normalem]{ulem}
\usepackage{tikz}
\usetikzlibrary{arrows.meta,positioning}
\usepackage{algorithm}
\usepackage{algpseudocode}
\usepackage{subcaption}
\usepackage{booktabs}
\usepackage{newunicodechar}
\newunicodechar{，}{,}

\usepackage{booktabs} 
\usepackage{diagbox}  
\usepackage{array}    

\usepackage{tabularx}
\usepackage{array}
\usepackage{makecell}
\usepackage{empheq}
\usepackage{url}
\usepackage{listings}
\usepackage{float}
\usepackage[shortlabels]{enumitem}
\usepackage[dvipsnames]{xcolor}
\usepackage{threeparttable}

\usepackage{titlesec}
\titlespacing*{\section}{0pt}{1.6ex}{1ex}
\titlespacing*{\subsection}{0pt}{1.2ex}{0.8ex}
\titlespacing*{\subsubsection}{0pt}{1.0ex}{0.6ex}

\titleformat{\section}
{\normalfont\large\bfseries}
{\thesection}{1em}{}
\titlespacing*{\section}
{0pt}{2.5ex plus 0.5ex minus 0.2ex}{1.2ex plus 0.2ex}

\titleformat{\subsection}
{\normalfont\normalsize\bfseries}
{\thesubsection}{1em}{}
\titlespacing*{\subsection}
{0pt}{2ex plus 0.5ex minus 0.2ex}{0.8ex plus 0.2ex}

\titleformat{\subsubsection}
{\normalfont\normalsize\bfseries}
{\thesubsubsection}{1em}{}
\titlespacing*{\subsubsection}
{0pt}{1.5ex plus 0.5ex minus 0.2ex}{0.5ex plus 0.2ex}

\definecolor{sintefblue}{RGB/cmyk}{0,60,101/1,.57,0,.4}
\definecolor{sintefgrey}{RGB/cmyk}{235,235,230/0,0,0,.1}
\colorlet{sintefgray}{sintefgrey}
\definecolorset{RGB/cmyk}{sintef}{}{lightgreen, 205,250,225/.23, 0,.20, 0;%
                                    green,       20,185,120/.73, 0,.67, 0;%
                                    darkgreen,    0, 70, 40/.93,.43,.92,.52}
\definecolorset{RGB/cmyk}{sintef}{}{yellow, 200,155,20/20, 36,98, 8;%
                                    red,    190, 60,55/19, 86,77, 8;%
                                    lilla,  120,  0,80/48,100,27,31}
\definecolorset{HTML}{sintef}{}{cyan,      22A7E5;%
                                magenta,   EC008C;%
                                lightgrey, D8D0C7}
\colorlet{sinteflightgray}{sinteflightgrey}



\newtheorem{theorem}{Theorem}[section]

\newtheorem{definition}{Definition}[section]

\newtheorem{lemma}{Lemma}[section]

\newtheorem{remark}{Remark}[section]

\newtheorem{proposition}{Proposition}[section]

\newtheorem{corollary}{Corollary }[section]

\newtheorem{assumption}{Assumption }[section]

\usepackage{bm}

\newcommand\abs[1]{\left|#1\right|}
\newcommand\norm[1]{\left\lVert#1\right\rVert}

\newcommand{\T}{\mathrm{T}}

\newcommand{\mE}{\mathcal{E}}

\newcommand{\Tp}{{\bm{p}}}

\newcommand{\dt}{\mathrm{d}t}
\newcommand{\etaxv}{\eta_{\bm{x},\bm{v}}}
\newcommand{\xixv}{\xi_{\bm{x},\bm{v}}}
\newcommand{\detaxv}{\dot{\eta}_{\bm{x},\bm{v}}}
\newcommand{\dxixv}{\dot{\xi}_{\bm{x},\bm{v}}}
\newcommand{\ddetaxv}{\ddot{\eta}_{\bm{x},\bm{v}}}

\newcommand{\grad}{\mathrm{grad}\,}
\newcommand{\hess}{\mathrm{Hess\,}}

\makeatletter
\@addtoreset{equation}{section}
\@addtoreset{figure}{section}
\@addtoreset{table}{section}
\@addtoreset{lemma}{section}
\makeatother

\titleformat{\part}
{\large\bfseries}
{Part \thepart}
{1em}
{}

\definecolor{newcolor}{rgb}{.2,.3,.7}

\definecolor{newred}{rgb}{0.59, 0.0, 0.09}
\definecolor{newblue}{rgb}{.2,.3,.7}
\usepackage[
  colorlinks=true,
  linkcolor=newblue,
  citecolor=newred,
  urlcolor=newblue
]{hyperref}

\begin{document}

\hypersetup{
  colorlinks=true,
  linkcolor=newblue,
  citecolor=newred,
  urlcolor=newblue
}

\begin{center}
\renewcommand{\thefootnote}{\fnsymbol{footnote}}

{\Large \bf Optimization on Affine-Transversal Hilbert Submanifolds:\\ Part I -- Theoretical Foundations}\\

\bigskip
\medskip

{\bf Yongcun Song}\footnote{\parbox[t]{15.5cm}{
Division of Mathematical Sciences, School of Physical and Mathematical Sciences,
Nanyang Technological University, 21 Nanyang Link, 637371, Singapore. This author was supported by a Start-Up Grant from Nanyang Technological University. Email: {\color{newblue}yongcun.song@ntu.edu.sg}}\vspace{0.5em}}
\quad
{\bf Luhao Xue}\footnote{\parbox[t]{15.5cm}{
Department of Mathematics, The University of Hong Kong, Hong Kong, China. This author was supported by the Hong Kong PhD Fellowship Scheme. Email: {\color{newblue}luhxue214@connect.hku.hk}}\vspace{0.5em}}
\quad
{\bf Xiaoming Yuan}\footnote{\parbox[t]{15.5cm}{
Department of Mathematics, The University of Hong Kong, Hong Kong, China. This author was supported by the Croucher Senior Fellowship. Email: {\color{newblue}xmyuan@hku.hk}}\vspace{0.5em}}
\quad
{\bf Hangrui Yue}\footnote{\parbox[t]{15.5cm}{
School of Mathematical Sciences, Nankai University, Tianjin 300071, China. This author was supported by the National Natural Science Foundation of
China (No. 12301399) and the Natural Science Foundation of Tianjin (No. 25JCZDJC00970). Email: {\color{newblue}yuehangrui@gmail.com}}}

\bigskip
{\today}

\renewcommand{\thefootnote}{\arabic{footnote}}
\setcounter{footnote}{0}

\end{center}

\bigskip

\noindent\rule{\textwidth}{0.4pt}
\par\vspace{6pt}
\noindent\textbf{Abstract:} In this paper, we establish the theoretical foundations for the generic optimization problem in a Hilbert space whose feasible set is an affine-transversal Hilbert submanifold given by the intersection of a nonlinear manifold and an affine subspace. We develop the geometric and analytical toolkit, such as the tangent-space characterization, projection operators, and implicit retraction operators, to essentially ensure the feasibility of iterates for algorithmic design. We also derive weak-form expressions for the derivatives of lifted objective functionals. In particular, we introduce a projection-induced Riemannian metric whose induced norm is uniformly equivalent to the ambient norm, under which the projection-induced gradient becomes an exact Riemannian gradient with an explicit formula. This construction replaces the implicit tangent-space Riesz representation underlying classical Riemannian optimization with directly computable operator evaluations, yielding a practical variable-metric framework for algorithmic design while preserving the geometric structure required for convergence analysis. With these theoretical foundations, it becomes possible to apply standard techniques in Euclidean spaces to design algorithms with strictly feasible iterates for the optimization problem on an affine-transversal Hilbert submanifold. We also propose the theoretical frameworks for algorithmic design on affine-transversal Hilbert submanifolds by showcasing the Riemannian line-search and trust-region algorithms with rigorous convergence analysis.
\par\vspace{5pt}
\noindent\textbf{Keywords:}\ \ Optimization, PDEs and variational models， differential geometry， infinite-dimensional manifolds, affine-transversal submanifolds, projection-induced Riemannian metric
\par\vspace{5pt}
\noindent\textbf{AMS subject classifications:}\ \ 65K10, 58B20, 90C48, 49M37, 35Q93, 46T05, 58C20.
\par\vspace{4pt}
\noindent\rule{\textwidth}{0.4pt}
\par\vspace{12pt}










\setcounter{tocdepth}{2}

	
\section{Introduction}\label{sec:intro}

\subsection{Model}\label{subsec:model}

\noindent Let \(\mathcal{E}\) and \(\mathcal{Y}\) be Hilbert spaces endowed with the inner products \((\cdot,\cdot)_{\mathcal{E}}\) and \((\cdot,\cdot)_{\mathcal{Y}}\), and the induced norms \(\|\cdot\|_{\mathcal{E}}\) and \(\|\cdot\|_{\mathcal{Y}}\), respectively. We consider a generic optimization problem whose feasible set is given by the intersection of a nonlinear manifold in \(\mathcal{E}\) and an affine subspace induced by linear equality constraints
\begin{equation}\label{J}
    \begin{aligned}
        \min&\quad J(\bm{x})\\
        \mbox{s.t.}&\quad \bm{x}\in\mathcal{M}^B := \mathcal{M} \cap \{\bm{x}\in\mathcal{E}\mid B\bm{x}=c\}.
    \end{aligned}
\end{equation}

\noindent More specifically, \(\mathcal{M}\) is assumed to be a \(\mathcal{C}^2\) (twice continuously differentiable) embedded submanifold of \(\mathcal{E}\), \(B:\mathcal{E}\to\mathcal{Y}\) a surjective bounded linear operator,  and \(c\in\mathcal{Y}\) prescribed. Also, the objective functional \(J:\mathcal{E}\to\mathbb{R}\) is assumed to be \(\mathcal{C}^2\), bounded from below, and coercive (which implies that all sublevel sets of \(J\) are bounded in \(\mathcal{E}\)). We further assume that the gradient mapping \(\grad J:\mathcal{E}\to\mathcal{E}\) and the Hessian mapping \(\hess J:\mathcal{E}\to\mathcal{L}(\mathcal{E};\mathcal{E})\) are both bounded mappings, in the sense that each maps bounded subsets of its domain to bounded subsets of its codomain. To alleviate the notation, we denote by \(J^B(\bm{x}):= J|_{\mathcal{M}^B}(\bm{x})\) the restriction of \(J(\bm{x})\) intrinsically on \(\mathcal{M}^B\), and rewrite \eqref{J} as
\begin{equation}\label{J^B}
    \min_{\bm{x}\in \mathcal{M}^B} J^B(\bm{x}).
\end{equation}
Furthermore, the set \(\mathcal{M}^B\) is assumed to be nonempty, and \(B|_{\mathcal M}:\mathcal M\to\mathcal{Y}\), which denotes the restriction of \(B\) to \(\mathcal M\), is assumed to be a submersion at every point of \(\mathcal M^B\).

\subsection{Applications}\label{sec:applications}

\noindent The conceptual model \eqref{J^B} is general enough to capture a wide range of specific applications in areas such as physics, chemistry, materials science, and beyond. To mention a few, in continuum mechanics and computer graphics, the modeling of inextensible elastic rods \cite{bisshopp1945large,bourgat1980large,glowinski1989augmented}, the simulation of hair, cloth, and shell structures \cite{audoly2010elasticity,daviet2023interactive}, and nematic liquid crystals \cite{de1993physics,hardt1986existence,lin1989nonlinear,virga2018variational}, can all be modeled as specific cases of \eqref{J^B}. The model \eqref{J^B} also appears in modern large-scale and real-time simulation scenarios, such as digital twins and interactive environments, where physics-based constrained models play an important role \cite{rasheed2020digital,tao2018digital}. In quantum mechanics and materials science, the model \eqref{J^B} appears for the computation of Bose--Einstein condensates (BECs) \cite{danaila2017computation,griffin1996bose,lieb2006derivation,tian2020ground} and the micromagnetic simulations based on the Landau--Lifshitz equation \cite{alouges1992global,cimrak2007survey,guo2008landau,lakshmanan2011fascinating,visintin1985landau}. It is also relevant to first-order Hamilton--Jacobi type models, such as Eikonal equations arising in geometrical optics, front propagation, shape-from-shading, and seismic traveltime computation \cite{crandall1983viscosity,rawlinson2004wave,rouy1992viscosity,sethian1996fast,tsitsiklis1995efficient,vidale1988finite}. The model \eqref{J^B} is also used to study computational biology problems such as the topological analysis of DNA supercoiling \cite{goriely2017mathematics}.

\noindent For such concrete applications, the nonlinear manifold \(\mathcal{M}\) in \eqref{J^B} typically encodes intrinsic geometric constraints arising from physical invariants, such as pointwise unit-norm, orthogonality, or isometry constraints, while the linear equality constraints encode essential additional requirements, such as boundary conditions, interface conditions, or global conservation laws \cite{audoly2010elasticity,de1993physics,goriely2017mathematics,hardt1986existence,lin1989nonlinear,virga2018variational,zhao2023systematic}. To see how these concrete applications fit into \eqref{J^B}, we illustrate two representative examples.

\noindent The first example is the simulation of inextensible elastic rods in \cite{bourgat1980large}. Let \(\bm{y}(s)\in H^2(0,L;\mathbb{R}^3)\) denote the rod centerline, parameterized by arc length \(s\). Introducing \(\bm{x}=\bm{y}'\), one may take the state space to be \(\mathcal{E}=H^1(0,L;\mathbb{R}^3)\). The functional \(J\) represents the total potential energy, consisting of the elastic bending energy and the work of external forces, and is given by
\begin{equation}\label{eq:rod_model}
\frac{1}{2} \int_0^L EI |\bm{x}'(s)|^2 \, \mathrm{d}s
- \int_0^L \bm{f}(s)\cdot \left(\bm{y}(0)+\int_0^s \bm{x}(t)\,\mathrm{d}t\right)\,\mathrm{d}s.
\end{equation}
The geometric inextensibility condition gives rise to the nonlinear manifold
\[
\mathcal{M}
=
\left\{
\bm{x}\in \mathcal{E}
\;\middle|\;
|\bm{x}(s)|=1 \text{ for } s\in[0,L],\;
\bm{x}(0)=\bm{e}_0,\;
\bm{x}(L)=\bm{e}_L
\right\}.
\]
Here, \(EI > 0\) denotes the flexural rigidity of the rod, and \(\bm{f}\) is the linear density of the external loads. The vectors \(\bm{e}_0, \bm{e}_L \in \mathbb{R}^3\) are the prescribed unit tangent vectors to the centerline at the endpoints \(P_0\) and \(P_L\), respectively, with \(|\bm{e}_0| = |\bm{e}_L| = 1\). The linear constraint \(B\bm{x} = c\), defined by \(\int_0^L \bm{x}(s)\,\mathrm{d}s = c\), dictates the relative position of the clamped endpoints. Specifically, the integral operator \(B\) computes the total displacement along the rod, while \(c\) prescribes the fixed spatial vector pointing from \(P_0\) to \(P_L\).

\noindent The second example concerns the Eikonal-constrained traveltime reconstruction. Let \(\Omega\subset\mathbb{R}^d\) be bounded, let \(\bm{u}:\Omega\to\mathbb{R}\) denote the phase or first-arrival traveltime, and let \(\bm{\sigma}:\Omega\to\mathbb{R}\) denote the slowness field. To obtain a smooth Sobolev-space representative of Eikonal-constrained inverse problems within the abstract framework \eqref{J^B}, consider
\[
\mathcal{E}
=
H^s(\Omega)\times H^{s-1}(\Omega),
\qquad
s>\frac{d}{2}+1,
\]
and assume that the slowness field satisfies $\bm{\sigma}(x)> \bm{\sigma}_{\min}>0$ for all $x\in\Omega$. Motivated by variational formulations of Eikonal inverse problems and traveltime tomography
\cite{deckelnick2011numerical,glowinski2015penalization,leung2006adjoint}, we consider the Tikhonov-type functional
\begin{equation}\label{eq:eikonal_model}
J(\bm{u},\bm{\sigma})
=
\frac12
\|\mathcal{T}\bm{u}-d\|_{\mathcal Y}^2
+
\frac{\alpha}{2}
\|\bm{\sigma}-\bm{\sigma}_{\rm ref}\|_{H^{s-1}(\Omega)}^2,
\end{equation}
where
\(
\mathcal T:H^s(\Omega)\to\mathcal Y
\)
is a bounded linear observation operator, \(d\in\mathcal Y\) denotes the measured arrival-time data, \(\bm{\sigma}_{\rm ref}\) is a reference slowness field, and \(\alpha>0\) is a regularization parameter. The Eikonal relation \(|\nabla \bm{u}|=\bm{\sigma}\) gives the nonlinear equality constraint
\[
\mathcal{M}=\left\{(\bm{u},\bm{\sigma})\in\mathcal{E}\;\middle|\;|\nabla \bm{u}(x)|=\bm{\sigma}(x) \text{ in } \Omega,\quad \bm{\sigma}> \bm{\sigma}_{\mathrm{min}}>0\right\}.
\]
Source or boundary normalization can be imposed by the linear constraint \(B(\bm{u},\bm{\sigma})=c\); for example, \(B(\bm{u},\bm{\sigma})=\gamma_{\Gamma_s}\bm{u}\) and \(c=0\) enforce \(\bm{u}=0\) on a prescribed source boundary \(\Gamma_s\subset\partial\Omega\). If \(\bm{\sigma}\) is fixed, the variable reduces to \(\bm{u}\) and the constraint becomes the fixed-slowness Eikonal equation \(|\nabla \bm{u}|=\bm{\sigma}\). \eqref{eq:eikonal_model} provides a smooth representative instance of \eqref{J^B}; nonsmooth global first-arrival solutions are usually treated in the viscosity-solution sense \cite{crandall1983viscosity,rouy1992viscosity}.

\subsection{Related Works}\label{subsec:related work}

\noindent Despite its broad applicability, the model \eqref{J^B} remains very challenging from both theoretical and numerical perspectives; see, e.g., \cite{alouges1997new,bartels2013approximation,bartels2020numerical,bourgat1980large,glowinski1989augmented,kovnatsky2016madmm,liu2020simple,saad2010numerical,simo1986three}. When discretized counterparts are considered for various applications of \eqref{J^B} arising in PDE and physical domains, a common difficulty is the high stiffness of the underlying energy functionals. Such examples include the bending energy of inextensible elastic rods \cite{audoly2010elasticity,bergou2008discrete}, the Oseen--Frank energy of liquid crystals \cite{de1993physics}, and the Gross--Pitaevskii energy of Bose--Einstein condensates \cite{danaila2017computation}. Hence, solving ill-conditioned Euclidean systems repeatedly becomes inevitable, and the convergence behavior of algorithms designed for the discretized problems may deteriorate as the mesh is refined; see, e.g., \cite{bank1989conditioning,borzi2011computational,graham2006anisotropic}. Moreover, a discretized counterpart of \eqref{J^B} may be largely and structurally simplified, yet it does not necessarily reflect some important geometrical and analytical properties of \eqref{J^B} in the function space, see, e.g., \cite{holst2012geometric,strang1973analysis}\footnote{We also refer to \cite{song2026optimization2} for the inextensible elastic rod problem, in which it is shown that a projection may fail to be orthogonal with respect to the induced inner product of a Hilbert space, even though its discrete counterpart may be orthogonal in Euclidean space.}.

\noindent These facts have inspired some literature working directly on \eqref{J^B} in the function space for algorithmic design, rather than on its discretized counterparts in Euclidean space. In \cite{adler2016constrained}, the classical penalty and Lagrange multiplier methods were proposed for liquid-crystal equilibrium problems; in \cite{bourgat1980large,glowinski1989augmented}, the augmented Lagrangian method was applied to inextensible elastic rod model problems. In \cite{danaila2017computation} and \cite{zhao2023systematic}, the Riemannian gradient and conjugate-gradient methods were applied to the Gross--Pitaevskii ground-state problem and the extreme 3D Euler flows problem, respectively. For the problems considered in \cite{danaila2017computation,zhao2023systematic}, \(\mathcal{M}^B\) can be readily identified as a Hilbert submanifold of \(\mathcal{E}\), and thus the associated geometric operators, such as the retractions and orthogonal projections, admit closed-form expressions through modifications of their counterparts on \(\mathcal{M}\). For the generic model \eqref{J^B}, a Riemannian sequential quadratic programming (SQP) approach, motivated by classical SQP methods, was proposed in~\cite{schiela2021sqp} in the context of the manifold \(\mathcal{M}\), while the affine constraint was relaxed and incorporated into the objective functional.

\subsection{Motivation}\label{subsec:numerical challenges}

\noindent Recall that a strong motivation to consider \eqref{J^B} is because of its broad applications in the PDE and physical domains. For such applications, the feasible set \(\mathcal{M}^B\) usually encodes strict invariants or conservation laws, so any constraint violation may produce physically inconsistent states. For instance, in micromagnetics \cite{lakshmanan2011fascinating} and liquid crystal dynamics \cite{lin1989nonlinear}, the manifold imposes a pointwise unit-norm condition representing a fundamental material property; even infinitesimal violations can lead to nonphysical configurations. Therefore, it becomes crucial to design such an algorithm for \eqref{J^B} that ensures the feasibility of all iterates. Meanwhile, the mandated feasibility requirement excludes the straightforward applications of optimization algorithms in either the standard or manifold contexts. The mentioned relaxation approaches, which commonly treat the manifold \(\mathcal{M}\) and the linear constraints \(B\bm{x}=c\) separately, are not applicable either. Instead, it requires the identification of the feasible directions of each point in \(\mathcal{M}^B\) analytically and geometrically --- which essentially urges the analysis of the retraction and projection operators on \(\mathcal{M}^B\).

\noindent Our target is to establish the theoretical foundations for \eqref{J^B} in the function space, and then provide a comprehensive analytic framework for generalizing the algorithmic design techniques from classical optimization literature to the more complex context of the form \eqref{J^B}.

\subsection{Roadmap}\label{subsec:roadmap}

\noindent Our roadmap for the establishment of theoretical foundations of \eqref{J^B} is to characterize \(\mathcal{M}^B\) as an affine-transversal submanifold of \(\mathcal{E}\) under mild assumptions, construct its intrinsic geometric objects (including tangent spaces, projections, and retractions), derive the derivatives of lifted objective functionals, establish uniform analytic estimates (including retraction radii, Lipschitz continuity properties of lifted objective functionals, and derivative bounds) that hold uniformly on bounded subsets of \(\mathcal{M}^B\), and develop a projection-induced variable-metric framework that facilitates algorithmic design. Ultimately, we lay down the theoretical foundations for algorithmic design at the operator level for \eqref{J^B}.

\noindent First, under some mild conditions (see Assumption~\ref{asmp:Lx invertible}), we show that \(\mathcal{M}^B\) is a Hilbert submanifold, and then derive its tangent space in terms of that of \(\mathcal{M}\) and the operator \(B\). Then, we construct the projection \(\bm{P}_{\bm{x}}^B\) and an implicit \(\mathcal{C}^2\) retraction \(\bm{R}_{\bm{x}}^B\), and derive weak-form expressions for the gradient and Hessian of the pullback objective \(J^B \circ \bm{R}_{\bm{x}}^B\). The projection and retraction operators ensure the feasibility throughout the iteration, while the derivative formulas provide the analytic foundation for algorithmic design.

\noindent Second, under a uniform coercivity assumption (see Assumption~\ref{asmp:Lx bounded below}), we establish the uniform analytic estimates for convergence analysis. These estimates include uniform retraction radii, radial Lipschitz continuity of \(J^B \circ \bm{R}_{\bm{x}}^B\), and uniform bounds on the Riemannian gradient of \(J^B\) and the Hessian of \(J^B \circ \bm{R}_{\bm{x}}^B\). We strengthen the geometric constructions into uniform statements on bounded subsets of \(\mathcal{M}^B\), thereby providing the quantitative foundation for rigorous optimization analysis on the affine-transversal manifold.

\noindent Then, a central component of the framework is the projection-induced variable-metric interpretation developed in Section~\ref{sec:projection-induced gradient}. For the generally non-orthogonal projection \(\bm{P}_{\bm{x}}^B\), a direct projection of the ambient gradient does not, in general, yield the Riemannian gradient under the ambient-induced metric. Indeed, the Riemannian gradient associated with the ambient-induced metric is characterized by a tangent-space Riesz representation problem, whose solution may be computationally expensive. This motivates the introduction of a projection-induced Riemannian metric on \(\T_{\bm{x}}\mathcal{M}^B\). We show that, with respect to this metric, the computable vector $\bm{P}_{\bm{x}}^B(\bm{P}_{\bm{x}}^B)^*\grad J(\bm{x})$ is the exact Riemannian gradient of \(J^B\). In this way, the variable-metric construction provides a fully explicit representation of the Riemannian gradient in terms of $\bm{P_x}^B$, avoiding implicit tangent-space Riesz representation solves. Moreover, by establishing that the projection-induced metric is uniformly equivalent to the ambient-induced metric in the sense of induced norms on bounded subsets, the resulting framework remains compatible with the standard analytical tools used in convergence analysis of first- and second-order methods in Hilbert spaces under the ambient metric. This structure plays an essential role in the subsequent numerical treatment of \eqref{J^B}, motivating, among other developments, the projection-induced CG--Steihaug method.

\noindent Last, with the established geometric and analytical foundations, we focus on the algorithmic design for \eqref{J^B} by leveraging standard optimization techniques in finite-dimensional spaces. We showcase how to design the benchmark Riemannian line-search and trust-region methods (in which a novel projection-induced CG--Steihaug solver is embedded). We reiterate that a distinctive feature of our approach is that the nonlinear manifold constraint \(\mathcal{M}\) and the linear equality constraint \(\{\bm{x}\in\mathcal{E}\mid B\bm{x}=c\}\) are treated as a single coupled geometric object at the operator level, so that the feasibility is guaranteed throughout. At the same time, our framework does not require \(\mathcal{M}^B\) to admit the kind of simple structure for which the projection and retraction are readily available in closed form, as in the special cases discussed in Section~\ref{subsec:related work}; the geometric operators constructed here remain valid even when \(\mathcal{M}^B\) has a more general geometry and the projection \(\bm{P}_{\bm{x}}^B\) is not orthogonal.

\subsection{Organization}

\noindent The rest of this paper is organized as follows. In Section~\ref{sec:geometric-prelim}, some preliminaries are summarized for the Hilbert manifold \(\mathcal{M}\). In Section~\ref{sec:geometry of MB}, some geometric properties of the affine-transversal submanifold \(\mathcal{M}^B\) are derived, and the associated retraction and projection operators are constructed.  In Section~\ref{sec:derivatives computations}, we derive the derivatives of the objective functional, and in Section~\ref{sec:properties}, we establish the uniform analytic estimates required for the convergence analysis of optimization algorithms on \(\mathcal{M}^B\). In Section~\ref{sec:projection-induced gradient}, we introduce the projection-induced Riemannian metric and the corresponding projection-induced gradient. Finally, in Sections~\ref{sec:algorithmLS} and~\ref{sec:algorithmTR}, we showcase how to design the Riemannian line-search and trust-region methods, respectively, together with rigorous convergence analysis. 

\subsection{Remark}

\noindent To implement the theoretical foundations to design specific algorithms for different applications of \eqref{J^B}, meticulous techniques for analysis and computation are required. The present article constitutes the theoretical part of a two-part study. We leave the application part, including the verification and practical implications of the assumptions introduced herein, the specification of implementable algorithms, as well as the numerical results, to the companion article \cite{song2026optimization2} in which the simulation of large displacements in inextensible elastic rods is discussed in detail.

\section{Preliminaries}\label{sec:geometric-prelim}

\noindent In this section, we collect the geometric notation, operators, and standing assumptions on the manifold \(\mathcal{M}\) that will be used throughout the paper. These assumptions are incorporated into the analytical framework and will be invoked in subsequent development without further mention.

\medskip
\noindent\textbf{Tangent spaces and Riemannian metric.}
Following standard texts~\cite{absil2008optimization,lang2012fundamentals}, the tangent space of \(\mathcal{M}\) at a point \(\bm{x} \in \mathcal{M}\) is given by
\[
\mathrm{T}_{\bm{x}}\mathcal{M} := \left\{ \frac{d}{d\alpha} \bm{\gamma}(\alpha) \Big|_{\alpha=0} \,\bigg|\, \bm{\gamma} \colon I \to \mathcal{M} \text{ is \(\mathcal{C}^1\) and } \bm{\gamma}(0) = \bm{x} \right\},
\]
where \(I \subseteq \mathbb{R}\) is an open interval containing \(0\). The tangent space $\mathrm{T}_{\bm{x}}\mathcal{M}$ is endowed with a Riemannian metric $(\cdot, \cdot)_{\mathcal{M}, \bm{x}}$ inherited from the ambient space $\mathcal{E}$ by restriction:
\[
(\bm{u}, \bm{v})_{\mathcal{M}, \bm{x}} = (\bm{u}, \bm{v})_{\mathcal{E}} \quad \text{for all } \bm{u}, \bm{v} \in \mathrm{T}_{\bm{x}}\mathcal{M}.
\]

\noindent Since \(\mathcal{M}\) is a submanifold of \(\mathcal{E}\) equipped with the induced metric, we naturally identify the tangent space \(\T_{\bm{x}}\mathcal{M}\) at any \(\bm{x} \in \mathcal{M}\) with a closed linear subspace of \(\mathcal{E}\). Under this identification, the zero tangent vector \(\bm{0_x} \in \T_{\bm{x}}\mathcal{M}\) coincides with the zero vector \(\bm{0}_{\mathcal{E}}\) of \(\mathcal{E}\). Consequently, we will hereafter use \(\bm{0}_{\mathcal{E}}\) to denote the zero vector of \(\T_{\bm{x}}\mathcal{M}\).

\medskip
\noindent\textbf{Projection operators.}
Throughout our analysis, a linear operator $\bm{F}$ from $\mathcal{E}$ to its closed subspace $\bar{\mathcal{E}}$ is termed a \textit{projection} if it is surjective onto $\bar{\mathcal{E}}$ and satisfies the idempotence property $\bm{F}^2 = \bm{F}$~\cite{rudin1991functional}. An immediate consequence is that $\bm{F}$ acts as the identity map on its range: for any $\bm{v} \in \bar{\mathcal{E}}$, we have $\bm{F}\bm{v} = \bm{v}$.

\noindent We assume the existence of a bounded linear projection operator $\bm{P}_{\bm{x}} \colon \mathcal{E} \to \T_{\bm{x}}\mathcal{M}$ onto the tangent space at each $\bm{x} \in \mathcal{M}$. Note that $\bm{P}_{\bm{x}}$ is not necessarily orthogonal or self-adjoint with respect to $(\cdot, \cdot)_{\mathcal{E}}$. 

\medskip
\noindent\textbf{Retraction mappings.}
For each point \(\bm{x}\in\mathcal{M}\), let \(V_{\bm{x}}\subset \mathrm{T}_{\bm{x}}\mathcal{M}\) be a neighborhood of \(\bm{0}_{\mathcal{E}}\). A \textit{(local) retraction} at \(\bm{x}\) is a mapping \(\bm{R}_{\bm{x}}:V_{\bm{x}}\to \mathcal{M}\) satisfying
\begin{equation}\label{retraction property 1}
\bm{R}_{\bm{x}}(\bm{0}_{\mathcal{E}}) = \bm{x} \quad \hbox{and} \quad D\bm{R}_{\bm{x}}(\bm{0}_{\mathcal{E}})= \mathrm{Id}_{\mathrm{T}_{\bm{x}}\mathcal{M}},
\end{equation}
where \(\mathrm{Id}_{\mathrm{T}_{\bm{x}}\mathcal{M}}\) is the identity operator on \(\mathrm{T}_{\bm{x}}\mathcal{M}\), and \(D\bm{R}_{\bm{x}}\) denotes the Fr\'echet derivative of \(\bm{R}_{\bm{x}}\). When \(V_{\bm{x}}=\mathrm{T}_{\bm{x}}\mathcal{M}\), the retraction is said to be \textit{globally defined}. Throughout this work, we assume that for each $\bm{x}\in \mathcal{M}$, \(\bm{R}_{\bm{x}}\) is a \(\mathcal{C}^{2}\) global retraction on \(\mathcal{M}\) at $\bm{x}$.

\noindent For any given point \(\bm{x} \in \mathcal{M}\), tangent vector \(\bm{v} \in \mathrm{T}_{\bm{x}}\mathcal{M}\), and parameter \(\alpha \in [0, +\infty)\), consider the curve defined by the retraction
\begin{equation}\label{eqn:directional retraction original}
    \bm{c}(\alpha) := \bm{R}_{\bm{x}}(\alpha \bm{v}).
\end{equation}
Since \(\bm{R}_{\bm{x}}\) is of class \(\mathcal{C}^2\), the second-order derivative of \(\bm{c}\) at \(\alpha=0\) is determined by the second-order derivative of \(\bm{R}_{\bm{x}}\) at \(\bm{0}_{\mathcal{E}}\). By Schwarz's theorem, this second-order derivative is represented by a symmetric bilinear mapping. Consequently, we can express the curve's second-order derivative through a mapping \(\ell: \bm{x}\in\mathcal{M} \mapsto \ell_{\bm{x}}\in\mathcal{L}^2_{\mathrm{Sym}}(\mathrm{T}_{\bm{x}}\mathcal{M}; \mathcal{E})\),  satisfying
\begin{equation}\label{second-order derivative Rx}
    \left.\frac{d^2}{d\alpha^2} \bm{c}(\alpha)\right|_{\alpha=0} = \ell_{\bm{x}}(\bm{v}, \bm{v}).
\end{equation}
Here, \(\mathcal{L}^2_{\mathrm{Sym}}(\mathrm{T}_{\bm{x}}\mathcal{M}; \mathcal{E})\) denotes the space of symmetric bilinear mappings from \(\mathrm{T}_{\bm{x}}\mathcal{M} \times \mathrm{T}_{\bm{x}}\mathcal{M}\) to \(\mathcal{E}\). We further assume that $\ell$ is a bounded mapping in the sense that, for every bounded subset
$X\subset\mathcal{M}$, it holds that
\[
    \sup_{\bm{x}\in X}
    \|\ell_{\bm{x}}\|
    <\infty.
\]

\section{Geometry of \texorpdfstring{$\mathcal{M}^B$}{M\^{}B}}\label{sec:geometry of MB}

\noindent This section develops the geometric structures used for optimization on \(\mathcal{M}^B\). We first prove that \(\mathcal{M}^B\) is a Hilbert submanifold of \(\mathcal{M}\). To this end, we formulate the transversality condition for the intersection of \(\mathcal{M}\) with the affine constraint set and show that it is equivalent to the submersion property of \(B|_{\mathcal{M}}\). Under Assumption~\ref{asmp:Lx invertible}, this submersion property follows constructively from the invertibility of \(L_{\bm{x}}:=B\bm{P}_{\bm{x}}B^*\), which yields a bounded right inverse of \(\T_{\bm{x}}(B|_{\mathcal{M}})=B|_{\T_{\bm{x}}\mathcal{M}}\). Consequently, \(\mathcal{M}^B\) is an affine-transversal Hilbert submanifold of \(\mathcal{E}\), with tangent space
\[
\T_{\bm{x}}\mathcal{M}^B = \T_{\bm{x}}\mathcal{M}\cap\ker(B), \qquad \bm{x}\in\mathcal{M}^B.
\]

\noindent We then construct an implicit local retraction \(\bm{R}_{\bm{x}}^B\) by applying the implicit function theorem to the ambient retraction \(\bm{R}_{\bm{x}}\) and the operator \(B\), and derive its second-order directional derivatives. Finally, we construct an explicit projection operator from \(\mathcal{E}\) onto \(\T_{\bm{x}}\mathcal{M}^B\). The retraction and projection operators are the basic geometric ingredients for Riemannian optimization algorithms on \(\mathcal{M}^B\). The projection provides tangent search directions and later induces the projection-induced Riemannian metric, while the retraction maps tangent updates back to \(\mathcal{M}^B\), thereby ensuring that all iterates remain feasible.

\subsection{Manifold Structure of \texorpdfstring{$\mathcal{M}^B$}{M\^{}B}}

\noindent To establish that $\mathcal{M}^B$ is a submanifold of $\mathcal{M}$, we appeal to standard results in infinite-dimensional differential geometry. A sufficient condition is that $\mathcal{M}$ and the affine constraint set intersect \emph{transversally} \cite[Theorem~3.5.12]{abraham2012manifolds}; equivalently, as we show below, the map $B|_{\mathcal{M}}$ is a \emph{submersion} at every point of $\mathcal{M}^B$, which then yields the submanifold structure directly via the submersion theorem \cite[Theorem~3.5.4]{abraham2012manifolds}. Both conditions follow constructively from Assumption~\ref{asmp:Lx invertible}.

\noindent We now make the transversality condition precise. Let \(\mathcal A_c:=\{\bm{x}\in\mathcal E\mid B\bm{x}=c\}\). Since \(B:\mathcal E\to\mathcal Y\) is bounded and surjective, \(\mathcal A_c\) is a closed affine subspace of \(\mathcal E\), and \(\T_{\bm{x}}\mathcal A_c=\ker(B)\) for all \(\bm{x}\in\mathcal A_c\). Let \(B^*:\mathcal Y\to\mathcal E\) denote the adjoint operator of \(B\), so that
\begin{equation}\label{norm of B*}
    \|B^*\|=\|B\|.
\end{equation}

\noindent In the Hilbert manifold setting, the transversality of \(\mathcal M\) and \(\mathcal A_c\) at \(\bm{x}\in\mathcal M^B\) means that
\begin{equation}\label{cond:transversality}
\T_{\bm{x}}\mathcal M+\T_{\bm{x}}\mathcal A_c=\mathcal E,
\qquad
\T_{\bm{x}}\mathcal M\cap\T_{\bm{x}}\mathcal A_c
\text{ splits in }
\T_{\bm{x}}\mathcal M.
\end{equation}
Condition \eqref{cond:transversality} ensures that \(\mathcal M^B\) is a submanifold of \(\mathcal M\), by \cite[Theorem 3.5.12]{abraham2012manifolds}. Since \(\T_{\bm{x}}\mathcal A_c=\ker(B)\), \eqref{cond:transversality} is equivalent to
\[
    \T_{\bm{x}}\mathcal M+\ker(B)=\mathcal E,
    \qquad
    \T_{\bm{x}}\mathcal M\cap\ker(B)
    \ \text{is split in }\
    \T_{\bm{x}}\mathcal M.
\]

\noindent On the other hand, the tangent map of \(B|_{\mathcal M}:\mathcal M\to
\mathcal Y\) at \(\bm{x}\) is
\begin{equation}\label{eqn:Sx}
    S_{\bm{x}}
    :=\T_{\bm{x}}(B|_{\mathcal M})
    =B|_{\T_{\bm{x}}\mathcal M}
    :\T_{\bm{x}}\mathcal M\to\mathcal Y.
\end{equation}
Recall that \(B|_{\mathcal M}\) is a submersion at \(\bm{x}\) if
\begin{equation}\label{cond:submersion}
S_{\bm{x}}\ \text{is surjective}
\qquad\text{and}\qquad
\ker S_{\bm{x}}
\ \text{splits in }\
\T_{\bm{x}}\mathcal M.
\end{equation}

\noindent Because \(B\) is surjective, the condition \(\T_{\bm{x}}\mathcal M+\ker(B)=\mathcal E\) is equivalent to the surjectivity of \(S_{\bm{x}}\). Indeed, the former gives \(B(\T_{\bm{x}}\mathcal M)=B(\mathcal E)=\mathcal Y\); conversely, if \(S_{\bm{x}}\) is surjective, then for any \(\bm e\in\mathcal E\) one can find \(\bm u\in\T_{\bm{x}}\mathcal M\) such that \(B\bm u=B\bm e\), hence \(\bm e-\bm u\in\ker(B)\). Moreover, $\ker S_{\bm{x}}=\T_{\bm{x}}\mathcal M\cap\ker(B)$. Therefore, the transversality condition \eqref{cond:transversality} is equivalent to the submersion condition \eqref{cond:submersion}.

\noindent We now give a convenient sufficient condition, formulated in terms of the projection, under which the submersion condition \eqref{cond:submersion} holds.

\begin{assumption}[Invertibility of \(L_{\bm{x}}\)]
\label{asmp:Lx invertible}
For any \(\bm{x}\in\mathcal M^B\), the linear operator
\[
L_{\bm{x}}:=B\bm P_{\bm{x}}B^*:\mathcal Y\to\mathcal Y
\]
is invertible.
\end{assumption}

\begin{remark}[Physical interpretation]
Assumption~\ref{asmp:Lx invertible} excludes degenerate configurations in many applications. For instance, in the inextensible rod model with fixed endpoints, \(L_{\bm{x}}\) fails to be invertible when the endpoint distance equals the rod length, in which case the rod is forced to be the straight segment between the endpoints and no deformation degrees of freedom remain. Such configurations are typically trivial and are excluded from the non-degenerate regime of computational interest.
\end{remark}

\begin{proposition}[Submersion induced by \(L_{\bm{x}}\)]
\label{prop:submersion-induced-by-Lx}
Under Assumption~\ref{asmp:Lx invertible}, the map
\(B|_{\mathcal M}:\mathcal M\to\mathcal Y\) is a submersion at every
\(\bm{x}\in\mathcal M^B\).
\end{proposition}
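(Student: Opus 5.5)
We verify the two conditions in \eqref{cond:submersion} directly, using $L_{\bm{x}}^{-1}$ to build a bounded right inverse of $S_{\bm{x}}$. Fix $\bm{x}\in\mathcal M^B$ and define
\[
G_{\bm{x}}:=\bm P_{\bm{x}}B^*L_{\bm{x}}^{-1}:\mathcal Y\to\T_{\bm{x}}\mathcal M .
\]
By Assumption~\ref{asmp:Lx invertible}, $L_{\bm{x}}$ is a bounded bijection of $\mathcal Y$. The bounded inverse theorem then makes $L_{\bm{x}}^{-1}$ bounded, so $G_{\bm{x}}$ is a bounded linear operator. Since $\bm P_{\bm{x}}$ maps into $\T_{\bm{x}}\mathcal M$, so does $G_{\bm{x}}$.

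\textbf{Surjectivity.} For every $\bm y\in\mathcal Y$ we have
\[
S_{\bm{x}}G_{\bm{x}}\bm y=B\bm P_{\bm{x}}B^*L_{\bm{x}}^{-1}\bm y=L_{\bm{x}}L_{\bm{x}}^{-1}\bm y=\bm y .
\]
Thus $S_{\bm{x}}G_{\bm{x}}=\mathrm{Id}_{\mathcal Y}$, and $S_{\bm{x}}$ is surjective.

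\textbf{Splitting.} Define $Q_{\bm{x}}:=G_{\bm{x}}S_{\bm{x}}:\T_{\bm{x}}\mathcal M\to\T_{\bm{x}}\mathcal M$. It is bounded, because $S_{\bm{x}}=B|_{\T_{\bm{x}}\mathcal M}$ is bounded. It is idempotent, since
\[
Q_{\bm{x}}^2=G_{\bm{x}}(S_{\bm{x}}G_{\bm{x}})S_{\bm{x}}=Q_{\bm{x}} .
\]
Its kernel is $\ker S_{\bm{x}}$: if $Q_{\bm{x}}\bm v=0$, then $S_{\bm{x}}\bm v=S_{\bm{x}}G_{\bm{x}}S_{\bm{x}}\bm v=S_{\bm{x}}Q_{\bm{x}}\bm v=0$, and the reverse inclusion is immediate.

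Consequently $\mathrm{Id}-Q_{\bm{x}}$ is a bounded projection of $\T_{\bm{x}}\mathcal M$ onto $\ker S_{\bm{x}}=\T_{\bm{x}}\mathcal M\cap\ker(B)$. This gives the topological direct sum
\[
\T_{\bm{x}}\mathcal M=\ker S_{\bm{x}}\oplus \mathrm{range}(G_{\bm{x}}),
\]
in which both summands are closed: the first is the kernel of a bounded operator, and the second is the range of the bounded projection $Q_{\bm{x}}$, since $\mathrm{range}(Q_{\bm{x}})=\mathrm{range}(G_{\bm{x}})$ by surjectivity of $S_{\bm{x}}$. Hence $\ker S_{\bm{x}}$ splits in $\T_{\bm{x}}\mathcal M$.

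Alternatively, $\T_{\bm{x}}\mathcal M$ is a closed subspace of the Hilbert space $\mathcal E$, hence itself a Hilbert space, so every closed subspace splits via its orthogonal complement. The explicit construction above is nevertheless worth recording, because $G_{\bm{x}}$ is presumably the ingredient reused later for the projection $\bm P_{\bm{x}}^B$.

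Both conditions in \eqref{cond:submersion} hold, so $B|_{\mathcal M}$ is a submersion at $\bm{x}$. The argument is essentially algebraic once $G_{\bm{x}}$ is written down. The only point needing care is the boundedness of $L_{\bm{x}}^{-1}$, which comes from the bounded inverse theorem and is needed for the splitting to be topological rather than merely algebraic.
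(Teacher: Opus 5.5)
Your proposal is correct and follows essentially the same route as the paper, which also takes $\mathcal R_{\bm x}=\bm P_{\bm x}B^*L_{\bm x}^{-1}$ as a bounded right inverse of $S_{\bm x}$ (with boundedness from the Bounded Inverse Theorem) and obtains $\T_{\bm x}\mathcal M=\ker S_{\bm x}\oplus\operatorname{Ran}(\mathcal R_{\bm x})$ from the decomposition $\bm v=(\bm v-\mathcal R_{\bm x}S_{\bm x}\bm v)+\mathcal R_{\bm x}S_{\bm x}\bm v$. Your observation that $\mathcal R_{\bm x}S_{\bm x}$ is a bounded idempotent, so that the complement is closed and the sum is topological, makes explicit a point the paper leaves implicit; your remark that closed subspaces of a Hilbert space always split is also valid.
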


\begin{proof}
Fix \(\bm{x}\in\mathcal M^B\) and define
\[
\mathcal R_{\bm{x}}
:=
\bm P_{\bm{x}}B^*L_{\bm{x}}^{-1}
:
\mathcal Y\to\T_{\bm{x}}\mathcal M.
\]
Since \(L_{\bm{x}}\) is a bounded bijective operator on the Hilbert space \(\mathcal Y\), its inverse \(L_{\bm{x}}^{-1}\) is bounded by the Bounded Inverse Theorem. Consequently, \(\mathcal R_{\bm{x}}\), being a composition of the bounded operators \(\bm P_{\bm{x}}\), \(B^*\), and \(L_{\bm{x}}^{-1}\), is itself bounded. For any \(\eta\in\mathcal Y\),
\[
S_{\bm{x}}\mathcal R_{\bm{x}}\eta
=
B\bm P_{\bm{x}}B^*L_{\bm{x}}^{-1}\eta
=
L_{\bm{x}}L_{\bm{x}}^{-1}\eta
=
\eta.
\]
Hence \(\mathcal R_{\bm{x}}\) is a bounded right inverse of \(S_{\bm{x}}\), and therefore \(S_{\bm{x}}\) is surjective. For every \(\bm v\in\T_{\bm{x}}\mathcal M\), it holds that
\[
\bm v=\bigl(\bm v-\mathcal R_{\bm{x}}S_{\bm{x}}\bm v\bigr)+\mathcal R_{\bm{x}}S_{\bm{x}}\bm v.
\]
The first term belongs to \(\ker S_{\bm{x}}\), while the second belongs to
\(\operatorname{Ran}(\mathcal R_{\bm{x}})\). Moreover, if
\(\bm v\in\ker S_{\bm{x}}\cap\operatorname{Ran}(\mathcal R_{\bm{x}})\),
then \(\bm v=\mathcal R_{\bm{x}}\eta\) for some \(\eta\in\mathcal Y\), and
\[
0_{\mathcal Y}=S_{\bm{x}}\bm v=S_{\bm{x}}\mathcal R_{\bm{x}}\eta=\eta,
\]
which implies \(\bm v=0_{\mathcal E}\). Therefore, we have
\[
\T_{\bm{x}}\mathcal M=\ker S_{\bm{x}}\oplus\operatorname{Ran}(\mathcal R_{\bm{x}}).
\]
That is, \(\ker S_{\bm{x}}\) is split in \(\T_{\bm{x}}\mathcal M\). Thus, the condition \eqref{cond:submersion} holds, and therefore
\(B|_{\mathcal M}\) is a submersion at \(\bm{x}\).
\end{proof}

\begin{corollary}[Hilbert Manifold Structure of
\texorpdfstring{$\mathcal{M}^B$}{M\^{}B}]\label{cor:MB manifold}
Under Assumption~\ref{asmp:Lx invertible}, the feasible set
\(\mathcal M^B=(B|_{\mathcal M})^{-1}(c)=\mathcal M\cap\mathcal A_c\) is a \(\mathcal C^2\) Hilbert submanifold of \(\mathcal M\), and hence an embedded Hilbert submanifold of \(\mathcal E\). Moreover, for every
\(\bm{x}\in\mathcal M^B\),
\begin{equation}\label{eqn:TxMB}
    \T_{\bm{x}}\mathcal M^B=\ker\bigl(\T_{\bm{x}}(B|_{\mathcal M})\bigr)=\T_{\bm{x}}\mathcal M\cap\ker(B)=\T_{\bm{x}}\mathcal M\cap\{\bm v\in\mathcal E\mid B\bm v=0_{\mathcal Y}\}.
\end{equation}
\end{corollary}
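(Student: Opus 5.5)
The argument combines Proposition~\ref{prop:submersion-induced-by-Lx} with the submersion (regular value) theorem for Hilbert manifolds \cite[Theorem~3.5.4]{abraham2012manifolds}. We first record the regularity of the map. Since \(B:\mathcal E\to\mathcal Y\) is bounded linear, it is \(\mathcal C^\infty\) on \(\mathcal E\). Because \(\mathcal M\) is a \(\mathcal C^2\) embedded submanifold, the restriction \(B|_{\mathcal M}=B\circ\iota\), with \(\iota:\mathcal M\hookrightarrow\mathcal E\) the inclusion, is a \(\mathcal C^2\) map between \(\mathcal C^2\) Hilbert manifolds. Its tangent map at \(\bm{x}\) is \(D B(\bm{x})|_{\T_{\bm{x}}\mathcal M}=B|_{\T_{\bm{x}}\mathcal M}=S_{\bm{x}}\), as in \eqref{eqn:Sx}.

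By Proposition~\ref{prop:submersion-induced-by-Lx}, \(B|_{\mathcal M}\) is a submersion at every point of \((B|_{\mathcal M})^{-1}(c)=\mathcal M\cap\mathcal A_c=\mathcal M^B\). Hence \(c\) is a regular value, and \(\mathcal M^B\) is nonempty by standing assumption.

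The submersion theorem then gives two conclusions:
\begin{itemize}
\item[(i)] \(\mathcal M^B\) is a \(\mathcal C^2\) submanifold of \(\mathcal M\).
\item[(ii)] \(\T_{\bm{x}}\mathcal M^B=\ker \T_{\bm{x}}(B|_{\mathcal M})=\ker S_{\bm{x}}\).
\end{itemize}
Since \(\ker S_{\bm{x}}=\{\bm v\in\T_{\bm{x}}\mathcal M\mid B\bm v=0_{\mathcal Y}\}=\T_{\bm{x}}\mathcal M\cap\ker(B)\), the chain of equalities \eqref{eqn:TxMB} follows directly.

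Two further points need to be checked.

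\textbf{Embedding in \(\mathcal E\).} We must show that \(\mathcal M^B\) is an embedded submanifold of \(\mathcal E\). A submanifold of a submanifold is a submanifold, because submanifold charts compose. Concretely, take a chart of \(\mathcal E\) straightening \(\mathcal M\) near \(\bm{x}\) into a closed split subspace \(F\subset\mathcal E\). Inside it, take a chart of \(\mathcal M\) straightening \(\mathcal M^B\) into a split subspace \(F_0\subset F\). Then \(F_0\) is split in \(\mathcal E\), since a complement of \(F_0\) in \(F\) plus a complement of \(F\) in \(\mathcal E\) gives a complement in \(\mathcal E\). In a Hilbert space every closed subspace is split anyway, so this step is essentially automatic. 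Finally, the subspace topology of \(\mathcal M^B\) induced from \(\mathcal M\) agrees with the one induced from \(\mathcal E\), since \(\mathcal M\) carries the subspace topology from \(\mathcal E\).

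\textbf{Identification of tangent spaces.} The tangent space of \(\mathcal M^B\) as a submanifold of \(\mathcal M\), viewed inside \(\T_{\bm{x}}\mathcal M\subset\mathcal E\), should coincide with the curve-based definition in Section~\ref{sec:geometric-prelim} applied to \(\mathcal M^B\subset\mathcal E\). This holds because both are images of the derivative of the inclusion. Alternatively, one inclusion can be checked directly. If \(\bm\gamma\) is a \(\mathcal C^1\) curve in \(\mathcal M^B\) with \(\bm\gamma(0)=\bm{x}\), then \(\bm\gamma'(0)\in\T_{\bm{x}}\mathcal M\). Differentiating \(B\bm\gamma(\alpha)=c\) gives \(B\bm\gamma'(0)=0_{\mathcal Y}\), so \(\T_{\bm{x}}\mathcal M^B\subseteq\T_{\bm{x}}\mathcal M\cap\ker(B)\). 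The reverse inclusion comes from the submersion theorem.

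\textbf{Main obstacle.} The only real subtlety is making sure the version of the submersion theorem used applies at the \(\mathcal C^2\) regularity level and yields the kernel characterization of the tangent space. The cited theorem in \cite{abraham2012manifolds} is stated for \(\mathcal C^k\), \(k\ge1\), and gives exactly this, so the argument should go through as outlined.
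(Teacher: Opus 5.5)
Your proposal is correct and follows essentially the same route as the paper: Proposition~\ref{prop:submersion-induced-by-Lx} makes $c$ a regular value of $B|_{\mathcal M}$, and the submersion theorem \cite[Theorem~3.5.4]{abraham2012manifolds} then gives both the $\mathcal C^2$ submanifold structure and $\T_{\bm{x}}\mathcal M^B=\ker S_{\bm{x}}=\T_{\bm{x}}\mathcal M\cap\ker(B)$, while embeddedness in $\mathcal E$ is inherited from that of $\mathcal M$. Your additional checks (composing submanifold charts, and the curve-based inclusion $\T_{\bm{x}}\mathcal M^B\subseteq\T_{\bm{x}}\mathcal M\cap\ker(B)$) spell out details that the paper states in a single line.
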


\begin{proof}
By Proposition~\ref{prop:submersion-induced-by-Lx}, the restriction \(B|_{\mathcal M}:\mathcal M\to\mathcal Y\) is a submersion at every point of \(\mathcal M^B=(B|_{\mathcal M})^{-1}(c)\). Hence \(c\) is a regular value of
\(B|_{\mathcal M}\). The submersion theorem for Hilbert manifolds
\cite[Theorem 3.5.4]{abraham2012manifolds} implies that \(\mathcal M^B\) is a \(\mathcal C^2\) Hilbert submanifold of \(\mathcal M\). Since \(\mathcal M\) is embedded in \(\mathcal E\), \(\mathcal M^B\) is also an embedded Hilbert submanifold of \(\mathcal E\).

\noindent The tangent space of a regular level set is the kernel of the corresponding
tangent map. Hence
\[
    \T_{\bm{x}}\mathcal M^B=\{\bm v\in\T_{\bm{x}}\mathcal M\mid B\bm v=0_{\mathcal Y}\}=\T_{\bm{x}}\mathcal M\cap\ker(B),
\]
which proves \eqref{eqn:TxMB}.
\end{proof}

\noindent We endow each tangent space \(\T_{\bm{x}}\mathcal M^B\) with the Riemannian
metric inherited from the ambient Hilbert space \(\mathcal E\), namely
\begin{equation}\label{eqn:ambient riemannian metri}
    (\bm u,\bm v)_{\bm{x}}=(\bm u,\bm v)_{\mathcal E},\quad\bm u,\bm v\in\T_{\bm{x}}\mathcal M^B.
\end{equation}
Note that \eqref{eqn:ambient riemannian metri} equips \(\mathcal M^B\) with a natural Riemannian manifold structure and provides the geometric setting for the optimization problem \eqref{J^B}.

\begin{remark}[Relation with transversality and the orthogonal case]\label{rem:equivalence of transversal and invertible}
The preceding proof is written in the submersion language because this is the form directly used by the submersion theorem. Equivalently, it proves the transversality of \(\mathcal M\) and the affine constraint set
\(\mathcal A_c\) at every point of \(\mathcal M^B\).

\noindent The role of Assumption~\ref{asmp:Lx invertible} is constructive and
projection-dependent. In general, \(\bm P_{\bm{x}}\) is not assumed to be
orthogonal, and therefore \(\bm P_{\bm{x}}B^*\) is not necessarily the
adjoint of \(S_{\bm{x}}=B|_{\T_{\bm{x}}\mathcal M}\). Consequently,
\(L_{\bm{x}}=B\bm P_{\bm{x}}B^*\) should not be interpreted as
\(S_{\bm{x}}S_{\bm{x}}^*\) in the non-orthogonal case.

\noindent If \(\bm P_{\bm{x}}\) is the orthogonal projection onto
\(\T_{\bm{x}}\mathcal M\), then \(S_{\bm{x}}^*=\bm P_{\bm{x}}B^*\) and
\(L_{\bm{x}}=S_{\bm{x}}S_{\bm{x}}^*\). In this special case, 
\begin{enumerate}[(i)]
    \item the invertibility of \(L_{\bm{x}}\)
    \item the submersion property of \(B|_{\mathcal M}\) or the transversality of \(\mathcal M\)
    \item the coercivity condition
\[
    (L_{\bm{x}}\delta,\delta)_{\mathcal Y}\ge C_{\bm{x}}\|\delta\|_{\mathcal Y}^2,\qquad\forall\,\delta\in\mathcal Y,
\]
\end{enumerate}
are equivalent. Indeed,
\((L_{\bm{x}}\delta,\delta)_{\mathcal Y}
=\|S_{\bm{x}}^*\delta\|_{\mathcal E}^2\), and the lower bound for
\(S_{\bm{x}}^*\) is equivalent to the surjectivity of \(S_{\bm{x}}\) by
\cite[Theorem 2.20]{brezis2011functional}.
\end{remark}
\noindent In the remainder of this paper, we work under Assumption~\ref{asmp:Lx invertible}.

\subsection{Retraction on \texorpdfstring{$\mathcal{M}^B$}{}}
\noindent Under Assumption~\ref{asmp:Lx invertible}, it is shown that \(\mathcal{M}^B\) is a Hilbert submanifold. Now,  we construct a local retraction on \(\mathcal{M}^B\). For any $\bm{x} \in \mathcal{M}^B$, we define
\begin{equation}\label{eqn:VxB}
V_{\bm{x}}^B(r):=\left\{\bm{v}\in \mathrm{T}_{\bm{x}}\mathcal{M}^B \mid \norm{\bm{v}}_{\bm{x}}< r\right\}.
\end{equation}
\begin{lemma}\label{lem:construction of retraction}
For any point $\bm{x} \in \mathcal{M}^B$, there exist a radius $r_{\bm{x}} > 0$ and a $\mathcal{C}^2$ mapping $\delta_{\bm{x}} : V_{\bm{x}}^B(r_{\bm{x}}) \to \mathcal{Y}$ such that, for all $\bm{v} \in V_{\bm{x}}^B(r_{\bm{x}})$, the following condition is satisfied:
\begin{equation}\label{eqn:r_x}
\bm{R}_{\bm{x}}\left( \bm{v} - \bm{P}_{\bm{x}} B^* \delta_{\bm{x}}(\bm{v}) \right) \in \mathcal{M}^B.
\end{equation}
\end{lemma}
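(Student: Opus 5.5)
We prove the lemma by the implicit function theorem applied to the map
\[
F_{\bm{x}}:\T_{\bm{x}}\mathcal{M}^B\times\mathcal{Y}\to\mathcal{Y},\qquad
F_{\bm{x}}(\bm{v},\delta):=B\,\bm{R}_{\bm{x}}\bigl(\bm{v}-\bm{P}_{\bm{x}}B^*\delta\bigr)-c .
\]
This map is well defined on all of $\T_{\bm{x}}\mathcal{M}^B\times\mathcal{Y}$, for two reasons. First, $\bm{v}\in\T_{\bm{x}}\mathcal{M}^B\subset\T_{\bm{x}}\mathcal{M}$ and $\bm{P}_{\bm{x}}B^*\delta\in\T_{\bm{x}}\mathcal{M}$, so the argument of $\bm{R}_{\bm{x}}$ lies in $\T_{\bm{x}}\mathcal{M}$. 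Second, $\bm{R}_{\bm{x}}$ is a global retraction on $\mathcal{M}$.

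Both $\T_{\bm{x}}\mathcal{M}^B$ and $\mathcal{Y}$ are Hilbert spaces, since $\T_{\bm{x}}\mathcal{M}^B=\T_{\bm{x}}\mathcal{M}\cap\ker(B)$ is closed in $\mathcal{E}$ by Corollary~\ref{cor:MB manifold}. The map $F_{\bm{x}}$ is $\mathcal{C}^2$, as the composition of the $\mathcal{C}^2$ map $\bm{R}_{\bm{x}}$ with bounded linear (hence smooth) maps. Since $\bm{R}_{\bm{x}}$ takes values in $\mathcal{M}$, the zero set of $F_{\bm{x}}$ consists exactly of those pairs $(\bm{v},\delta)$ for which $\bm{R}_{\bm{x}}(\bm{v}-\bm{P}_{\bm{x}}B^*\delta)\in\mathcal{M}\cap\mathcal{A}_c=\mathcal{M}^B$. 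It therefore suffices to solve $F_{\bm{x}}(\bm{v},\delta)=0_{\mathcal{Y}}$ for $\delta$ as a $\mathcal{C}^2$ function of $\bm{v}$ near $\bm{0}_{\mathcal{E}}$.

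The base point is $(\bm{0}_{\mathcal{E}},0_{\mathcal{Y}})$. Using \eqref{retraction property 1} we get $F_{\bm{x}}(\bm{0}_{\mathcal{E}},0_{\mathcal{Y}})=B\bm{x}-c=0_{\mathcal{Y}}$, because $\bm{x}\in\mathcal{M}^B$. For the partial derivative in $\delta$ at the base point, the chain rule together with $D\bm{R}_{\bm{x}}(\bm{0}_{\mathcal{E}})=\mathrm{Id}_{\T_{\bm{x}}\mathcal{M}}$ gives
\[
D_\delta F_{\bm{x}}(\bm{0}_{\mathcal{E}},0_{\mathcal{Y}})\,\eta=-B\bm{P}_{\bm{x}}B^*\eta=-L_{\bm{x}}\eta .
\]
By Assumption~\ref{asmp:Lx invertible} and the Bounded Inverse Theorem (as in the proof of Proposition~\ref{prop:submersion-induced-by-Lx}), $L_{\bm{x}}$ is a linear homeomorphism of $\mathcal{Y}$.

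The implicit function theorem on Banach spaces, in its $\mathcal{C}^k$ version with $k=2$, then yields neighborhoods $U\ni\bm{0}_{\mathcal{E}}$ in $\T_{\bm{x}}\mathcal{M}^B$ and $W\ni0_{\mathcal{Y}}$, together with a unique $\mathcal{C}^2$ map $\delta_{\bm{x}}:U\to W$, such that $\delta_{\bm{x}}(\bm{0}_{\mathcal{E}})=0_{\mathcal{Y}}$ and $F_{\bm{x}}(\bm{v},\delta_{\bm{x}}(\bm{v}))=0_{\mathcal{Y}}$ for all $\bm{v}\in U$. We then choose $r_{\bm{x}}>0$ small enough that $V_{\bm{x}}^B(r_{\bm{x}})\subset U$, which is possible because $U$ is open and the norm $\|\cdot\|_{\bm{x}}$ from \eqref{eqn:ambient riemannian metri} is the ambient norm. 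Restricting $\delta_{\bm{x}}$ to this ball gives \eqref{eqn:r_x}.

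The step that needs the most care is verifying the hypotheses of the implicit function theorem. One must check that the $\delta$-derivative is exactly $-L_{\bm{x}}$. This uses that $\bm{P}_{\bm{x}}B^*\eta$ is a genuine tangent vector, so that $D\bm{R}_{\bm{x}}(\bm{0}_{\mathcal{E}})$ acts on it as the identity. One must also check that the domain is a Banach space, so that the theorem applies in infinite dimensions. As a by-product, differentiating the identity $F_{\bm{x}}(\bm{v},\delta_{\bm{x}}(\bm{v}))=0_{\mathcal{Y}}$ at $\bm{0}_{\mathcal{E}}$ gives $D\delta_{\bm{x}}(\bm{0}_{\mathcal{E}})\bm{v}=L_{\bm{x}}^{-1}B\bm{v}=0_{\mathcal{Y}}$, since $B\bm{v}=0_{\mathcal{Y}}$ on $\T_{\bm{x}}\mathcal{M}^B$. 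This is not needed for the lemma, but it is what will make $\bm{R}_{\bm{x}}^B(\bm{v}):=\bm{R}_{\bm{x}}(\bm{v}-\bm{P}_{\bm{x}}B^*\delta_{\bm{x}}(\bm{v}))$ a retraction later on.
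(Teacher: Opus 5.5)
Your proposal is correct and follows the paper's proof essentially verbatim. It uses the same map $F_{\bm{x}}$ (up to the harmless shift by $c$), the same computation $\partial_\delta F_{\bm{x}}(\bm{0}_{\mathcal{E}},0_{\mathcal{Y}})=-L_{\bm{x}}$, and the same appeal to the $\mathcal{C}^2$ implicit function theorem. Your explicit remarks that $\bm{R}_{\bm{x}}$ takes values in $\mathcal{M}$ (so $B\bm{R}_{\bm{x}}(\cdot)=c$ yields membership in $\mathcal{M}^B$) and that $\T_{\bm{x}}\mathcal{M}^B$ is a closed subspace are details the paper leaves implicit.
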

\begin{proof}
Fix an arbitrary $\bm{x} \in \mathcal{M}^B\subset\mathcal{M}$. Note that for any $\bm{v} \in \mathrm{T}_{\bm{x}} \mathcal{M}^B\subset\mathrm{T}_{\bm{x}} \mathcal{M}$ and $\delta \in \mathcal{Y}$, we have $$
\bm{v} - \bm{P}_{\bm{x}} B^* \delta \in \mathrm{T}_{\bm{x}} \mathcal{M}.
$$
Define the mapping $F_{\bm{x}}: \mathrm{T}_{\bm{x}} \mathcal{M}^B \times \mathcal{Y} \rightarrow \mathcal{Y}$ by
\begin{equation}\label{eqn:Fx}
	F_{\bm{x}}(\bm{v}, \delta) = B \bm{R}_{\bm{x}}\left( \bm{v} - \bm{P}_{\bm{x}} B^* \delta \right).
\end{equation}
Since $F_{\bm{x}}$ is the composition of the bounded linear map $(\bm{v},\delta) \mapsto \bm{v} - \bm{P}_{\bm{x}} B^* \delta$, the globally defined $\mathcal{C}^2$ retraction $\bm{R}_{\bm{x}}$, and the bounded linear operator $B$, it is of class $\mathcal{C}^2$. In particular, the partial derivative
\begin{equation}\label{eqn:partial Fx delta}
\frac{\partial F_{\bm{x}}}{\partial \delta}(\bm{v},\delta) = -B\circ {D}\bm{R}_{\bm{x}}\left( \bm{v} - \bm{P}_{\bm{x}} B^* \delta \right)\circ \bm{P}_{\bm{x}} B^* \in \mathcal{L}(\mathcal{Y}; \mathcal{Y})
\end{equation}
exists at all points $(\bm{v},\delta) \in \mathrm{T}_{\bm{x}} \mathcal{M}^B \times \mathcal{Y}$ and depends continuously on $(\bm{v},\delta)$.

\noindent By \eqref{retraction property 1}, we have
$
F_{\bm{x}}(\bm{0}_{\mathcal{E}}, 0_{\mathcal{Y}}) = B\bm{R_x}(\bm{0}_{\mathcal{E}})=B\bm{x}=c,
$
and 
\begin{equation}\label{eqn:partial F =-Lx}
    \frac{\partial F_{\bm{x}}}{\partial \delta}\bigg|_{(\bm{0}_{\mathcal{E}}, 0_{\mathcal{Y}})} =-B\circ {D}\bm{R}_{\bm{x}}\left( \bm{0}_{\mathcal{E}} \right)\circ \bm{P}_{\bm{x}} B^*=-B\bm{P}_{\bm{x}} B^* =-L_{\bm{x}}.
\end{equation}
Since $L_{\bm{x}}$ is invertible, the Implicit Function Theorem \cite[Theorem 7.13-1 on p.~548]{ciarlet2013linear} implies that there exist a radius $r_{\bm{x}} > 0$ and a $\mathcal{C}^2$ map $\delta_{\bm{x}}:  V_{\bm{x}}^B(r_{\bm{x}}) \to \mathcal{Y}$, such that for any $\bm{v} \in  V_{\bm{x}}^B(r_{\bm{x}})$ we have
\begin{equation}\label{s1}
B \bm{R}_{\bm{x}}\left( \bm{v} - \bm{P}_{\bm{x}} B^* \delta_{\bm{x}} (\bm{v}) \right) = c,
\end{equation}
with $\delta_{\bm{x}} (\bm{0}_{\mathcal{E}}) = 0_{\mathcal{Y}}$. We thus complete the proof.
\end{proof}

\noindent We simplify the notation for the neighborhood of radius $r_{\bm{x}}$ (from Lemma~\ref{lem:construction of retraction}) by writing
\begin{equation}\label{eqn:Vx}
    V_{\bm{x}}^B:=\left\{\bm{v}\in \mathrm{T}_{\bm{x}}\mathcal{M}^B \mid \|\bm{v}\|_{\bm{x}}< r_{\bm{x}}\right\}.
\end{equation}
Accordingly, we define the mapping \(\bm{R}_{\bm{x}}^B:V_{\bm{x}}^B\to \mathcal{M}^B\) by
\begin{equation}\label{retraction RxB}
\bm{R}_{\bm{x}}^B(\bm{v})
:= \bm{R}_{\bm{x}}\left(\bm{v}-\bm{P}_{\bm{x}} B^{*}\,\delta_{\bm{x}}(\bm{v})\right),
\end{equation}
where the operator \(\delta_{\bm{x}}\) is constructed as in Lemma~\ref{lem:construction of retraction}.

\begin{theorem}\label{thm:retraction second-order derivative}
	For any point $\bm{x}\in \mathcal{M}^B$, let $V_{\bm{x}}^B$ be defined by \eqref{eqn:Vx}. For any $\bm{v} \in V_{\bm{x}}^B$, we have $\bm{R}_{\bm{x}}^B(\bm{v}) \in \mathcal{M}^B$. Moreover, for any $\bm{v} \in \T_{\bm{x}}\mathcal{M}^B$, it holds that
	\begin{equation}\label{eqn:retraction RxB value at 0 and first-order derivative}
	    	\bm{c}^B(0) = \bm{x}, \qquad \left. \frac{d}{d\alpha} \bm{c}^B(\alpha) \right|_{\alpha=0} = \bm{v},
	\end{equation}
	where $\bm{c}^B(\alpha) := \bm{R}_{\bm{x}}^B(\alpha \bm{v})$ is defined for 
    all $\alpha \geq 0$ such that $\alpha\|\bm{v}\|_{\bm{x}} < r_{\bm{x}}$. Furthermore, we have
	\begin{equation}\label{second-order derivative RxB}
		\left. \frac{d^2}{d\alpha^2} \bm{c}^B(\alpha) \right|_{\alpha=0} = \Lambda_{\bm{x}} \left(\left. \frac{d^2}{d\alpha^2} \bm{c}(\alpha) \right|_{\alpha=0} \right)= \Lambda_{\bm{x}} \left( \ell_{\bm{x}}(\bm{v}, \bm{v}) \right),
	\end{equation}
	where for any $\bm{x} \in \mathcal{M}^B$, the operator $\Lambda_{\bm{x}}: \mathcal{E} \to \mathcal{E}$ is defined by
	\begin{equation}\label{Lambda}
		\Lambda_{\bm{x}} := \mathrm{Id}_{\mathcal{E}} - \bm{P_x}B^*L_{\bm{x}}^{-1}B.
	\end{equation}
\end{theorem}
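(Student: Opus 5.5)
The plan is to differentiate the defining identity \eqref{s1} along the ray $\alpha\bm{v}$ and then read off the derivatives of $\bm{c}^B$ by the chain rule. Membership $\bm{R}_{\bm{x}}^B(\bm{v})\in\mathcal{M}^B$ is immediate. Indeed, $\bm{v}-\bm{P}_{\bm{x}}B^*\delta_{\bm{x}}(\bm{v})\in\T_{\bm{x}}\mathcal{M}$, and the global retraction $\bm{R}_{\bm{x}}$ maps into $\mathcal{M}$. Moreover, \eqref{s1} gives $B\bm{R}_{\bm{x}}^B(\bm{v})=c$. Combined with $\delta_{\bm{x}}(\bm{0}_{\mathcal{E}})=0_{\mathcal{Y}}$ and \eqref{retraction property 1}, this also yields $\bm{c}^B(0)=\bm{x}$.

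Fix $\bm{v}\in\T_{\bm{x}}\mathcal{M}^B$ and set $\bm{w}(\alpha):=\alpha\bm{v}-\bm{P}_{\bm{x}}B^*\delta_{\bm{x}}(\alpha\bm{v})$, so that $\bm{c}^B(\alpha)=\bm{R}_{\bm{x}}(\bm{w}(\alpha))$. Write $\dot\delta:=D\delta_{\bm{x}}(\bm{0}_{\mathcal{E}})[\bm{v}]$ and $\ddot\delta:=D^2\delta_{\bm{x}}(\bm{0}_{\mathcal{E}})[\bm{v},\bm{v}]$, both of which exist since $\delta_{\bm{x}}$ is $\mathcal{C}^2$.

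For the first-order claim, differentiate $B\bm{R}_{\bm{x}}(\bm{w}(\alpha))=c$ at $\alpha=0$. Using $D\bm{R}_{\bm{x}}(\bm{0}_{\mathcal{E}})=\mathrm{Id}$ and $\bm{w}(0)=\bm{0}_{\mathcal{E}}$, this gives $B\bm{v}-B\bm{P}_{\bm{x}}B^*\dot\delta=0$. Since $B\bm{v}=0_{\mathcal{Y}}$ by \eqref{eqn:TxMB}, we get $L_{\bm{x}}\dot\delta=0$, and hence $\dot\delta=0_{\mathcal{Y}}$ by Assumption~\ref{asmp:Lx invertible}. It follows that $\bm{w}'(0)=\bm{v}$ and $(\bm{c}^B)'(0)=D\bm{R}_{\bm{x}}(\bm{0}_{\mathcal{E}})\bm{v}=\bm{v}$.

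The second-order claim is the main step. The chain rule gives
\[
(\bm{c}^B)''(0)=D^2\bm{R}_{\bm{x}}(\bm{0}_{\mathcal{E}})[\bm{w}'(0),\bm{w}'(0)]+D\bm{R}_{\bm{x}}(\bm{0}_{\mathcal{E}})\bm{w}''(0).
\]
Since $\bm{w}'(0)=\bm{v}$ and $\bm{w}''(0)=-\bm{P}_{\bm{x}}B^*\ddot\delta$, and by \eqref{second-order derivative Rx} the first term equals $\ell_{\bm{x}}(\bm{v},\bm{v})$, this becomes
\[
(\bm{c}^B)''(0)=\ell_{\bm{x}}(\bm{v},\bm{v})-\bm{P}_{\bm{x}}B^*\ddot\delta.
\]
Here one must note that $D^2\bm{R}_{\bm{x}}(\bm{0}_{\mathcal{E}})[\bm{v},\bm{v}]$ is exactly the second derivative of $\alpha\mapsto\bm{R}_{\bm{x}}(\alpha\bm{v})$, which is what \eqref{second-order derivative Rx} records.

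To determine $\ddot\delta$, differentiate the identity $B\bm{c}^B(\alpha)=c$ twice. This gives $B(\bm{c}^B)''(0)=0$, that is, $B\ell_{\bm{x}}(\bm{v},\bm{v})-L_{\bm{x}}\ddot\delta=0$, so $\ddot\delta=L_{\bm{x}}^{-1}B\ell_{\bm{x}}(\bm{v},\bm{v})$. Substituting back yields
\[
(\bm{c}^B)''(0)=\bigl(\mathrm{Id}_{\mathcal{E}}-\bm{P}_{\bm{x}}B^*L_{\bm{x}}^{-1}B\bigr)\ell_{\bm{x}}(\bm{v},\bm{v})=\Lambda_{\bm{x}}\bigl(\ell_{\bm{x}}(\bm{v},\bm{v})\bigr).
\]

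The only delicate point is justifying the chain rule for $\mathcal{C}^2$ maps between Banach spaces. This holds because $\bm{R}_{\bm{x}}$ and $\delta_{\bm{x}}$ are both $\mathcal{C}^2$ and $\bm{w}$ is $\mathcal{C}^2$ near $0$. Also, for an arbitrary $\bm{v}$ the curve is defined only for $\alpha<r_{\bm{x}}/\|\bm{v}\|_{\bm{x}}$, but this interval is a neighborhood of $0$, so the derivatives at $0$ are well defined.
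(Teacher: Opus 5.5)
Your proposal is correct and follows the paper's proof. Both differentiate the constraint $B\bm{R}_{\bm{x}}\bigl(\alpha\bm{v}-\bm{P}_{\bm{x}}B^*\delta_{\bm{x}}(\alpha\bm{v})\bigr)=c$ along the ray and use the invertibility of $L_{\bm{x}}$ to get $\left.\frac{d}{d\alpha}\delta_{\bm{x}}(\alpha\bm{v})\right|_{\alpha=0}=0_{\mathcal{Y}}$ and $\left.\frac{d^2}{d\alpha^2}\delta_{\bm{x}}(\alpha\bm{v})\right|_{\alpha=0}=L_{\bm{x}}^{-1}B\ell_{\bm{x}}(\bm{v},\bm{v})$. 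The only difference is presentational: you apply the second-order chain rule directly, while the paper matches the $\alpha$ and $\alpha^2$ coefficients in Taylor expansions with $o(\alpha^2)$ remainders, so your version skips the step (harmless given $\mathcal{C}^2$ regularity) of reading a second derivative off such an expansion.
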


\begin{proof}
Recall from Lemma~\ref{lem:construction of retraction} that $\delta_{\bm{x}}(\bm{0}_{\mathcal{E}}) = 0_{\mathcal{Y}}$ and $\delta_{\bm{x}}$ is $\mathcal{C}^2$. 
We then obtain the expansion
\begin{equation*}
\alpha \bm{v} - \bm{P}_{\bm{x}} B^* \delta_{\bm{x}}(\alpha\bm{v})
= \left[ \bm{v} - \bm{P}_{\bm{x}} B^* g_0 \right] \alpha
- \frac{1}{2} \bm{P}_{\bm{x}} B^* h_0 \alpha^2 + o(\alpha^2),
\end{equation*}
where 
\[
g_0 = \left. \frac{d}{d\alpha} \delta_{\bm{x}}(\alpha\bm{v}) \right|_{\alpha=0}
\quad \text{and} \quad 
h_0 = \left. \frac{d^2}{d\alpha^2} \delta_{\bm{x}}(\alpha\bm{v}) \right|_{\alpha=0}.
\]

\noindent The retraction $\bm{R}_{\bm{x}}$ admits the local expansion
\[
\bm{R}_{\bm{x}}(\bm{v}) = \bm{x} + \bm{v} + \frac{1}{2} \ell_{\bm{x}}(\bm{v}, \bm{v}) + o\left( \|\bm{v}\|_{\bm{x}}^2 \right).
\]
Combining these results, we compute
\begin{equation}\label{eqn:taytor expansion of retraction RB original}
    \begin{aligned}
	&\bm{R}_{\bm{x}}\left( \alpha \bm{v} - \bm{P}_{\bm{x}} B^* \delta_{\bm{x}}(\alpha\bm{v}) \right)\\
	=& \bm{x} + \left( \alpha \bm{v} - \bm{P}_{\bm{x}} B^* \delta_{\bm{x}}(\alpha\bm{v}) \right)
	+ \frac{1}{2} \ell_{\bm{x}}\left( \alpha \bm{v} - \bm{P}_{\bm{x}} B^* \delta_{\bm{x}}(\alpha\bm{v}), \alpha \bm{v} - \bm{P}_{\bm{x}} B^* \delta_{\bm{x}}(\alpha\bm{v}) \right)
	+ o(\alpha^2) \\
	=& \bm{x} + \left[ \bm{v} - \bm{P}_{\bm{x}} B^* g_0 \right] \alpha
	+ \frac{\alpha^2}{2} \left( -\bm{P}_{\bm{x}} B^* h_0 + \ell_{\bm{x}}\left( \bm{v} - \bm{P}_{\bm{x}} B^* g_0, \bm{v} - \bm{P}_{\bm{x}} B^* g_0 \right) \right)
	+ o(\alpha^2).
\end{aligned}
\end{equation}
By Lemma~\ref{lem:construction of retraction}, for any 
\(\bm{v}\in \mathrm{T}_{\bm{x}}\mathcal{M}^B\) and any \(\alpha \geq 0\) 
satisfying \(\alpha\|\bm{v}\|_{\bm{x}} < r_{\bm{x}}\) 
(so that \(\alpha\bm{v}\in V_{\bm{x}}^B\)), we have
\[
B\bm{R}_{\bm{x}}\left(\alpha\bm{v}-\bm{P}_{\bm{x}}B^*\,\delta_{\bm{x}}(\alpha\bm{v})\right)
= c= B\bm{x}.
\]
\noindent Taking the limit, we have
\[
0_{\mathcal{Y}}=\lim_{\alpha \to 0} \frac{B \left[ \bm{R}_{\bm{x}}\left( \alpha \bm{v} - \bm{P}_{\bm{x}} B^* \delta_{\bm{x}}(\alpha\bm{v}) \right) - \bm{x} \right] }{\alpha}
= B \left[ \bm{v} - \bm{P}_{\bm{x}} B^* g_0 \right]
= -B \bm{P}_{\bm{x}} B^*g_0 =-L_{\bm{x}}g_0,
\]
which implies $g_0 = 0_{\mathcal{Y}}$. Substituting $g_0 = 0_{\mathcal{Y}}$ in \eqref{eqn:taytor expansion of retraction RB original} yields the simplified expansion
\begin{equation}\label{eq:thm31_1}
    \bm{R}_{\bm{x}}\left( \alpha \bm{v} - \bm{P}_{\bm{x}} B^* \delta_{\bm{x}}(\alpha\bm{v}) \right)
= \bm{x} + \alpha \bm{v} + \frac{\alpha^2}{2} \left( -\bm{P}_{\bm{x}} B^* h_0 + \ell_{\bm{x}}(\bm{v}, \bm{v}) \right) + o(\alpha^2).
\end{equation}
Since $\bm{v} \in\mathrm{T}_{\bm{x}}\mathcal{M}^B$, we have $B\bm{v} = {0}_{\mathcal{Y}}$. By the constraint condition, we have
\[
B \bm{R}_{\bm{x}}\left( \alpha \bm{v} - \bm{P}_{\bm{x}} B^* \delta_{\bm{x}}(\alpha\bm{v}) \right) = c = B\bm{x} + \alpha B\bm{v},
\]
which implies
\[
B \left[ \bm{R}_{\bm{x}}\left( \alpha \bm{v} - \bm{P}_{\bm{x}} B^* \delta_{\bm{x}}(\alpha\bm{v}) \right) - \bm{x} - \alpha \bm{v} \right] = {0}_{\mathcal{Y}}.
\]
\noindent Taking the limit, we obtain
\[
\lim_{\alpha \to 0} \frac{ B \left[ \bm{R}_{\bm{x}}\left( \alpha \bm{v} - \bm{P}_{\bm{x}} B^* \delta_{\bm{x}}(\alpha\bm{v}) \right) - \bm{x} - \alpha \bm{v} \right] }{ \alpha^2 }
= \frac{1}{2} B \left[ -\bm{P}_{\bm{x}} B^* h_0 + \ell_{\bm{x}}(\bm{v}, \bm{v}) \right] = {0}_{\mathcal{Y}},
\]
which yields
\begin{equation}\label{eq:thm31_2}
h_0 = L_{\bm{x}}^{-1} B \ell_{\bm{x}}(\bm{v}, \bm{v}).
\end{equation}
From \eqref{retraction RxB}, \eqref{eq:thm31_1}, and \eqref{eq:thm31_2}, we have that
\[
\bm{c}^B(\alpha) = \bm{x} + \alpha \bm{v} + \frac{\alpha^2}{2} \left( \mathrm{Id}_{\mathcal{E}} - \bm{P}_{\bm{x}} B^* L_{\bm{x}}^{-1} B \right) \left( \ell_{\bm{x}}(\bm{v}, \bm{v}) \right) + o(\alpha^2).
\]
Then, the desired results follow from some direct calculations.
\end{proof}
\noindent For any \(\bm{x}\in\mathcal{M}^B\) and \(\bm{v}\in \mathrm{T}_{\bm{x}}\mathcal{M}^B\), \eqref{eqn:retraction RxB value at 0 and first-order derivative} yields
\[
\bm{R}_{\bm{x}}^B(\bm{0}_{\mathcal{E}})= \bm{c}^B(0)= \bm{x},\qquad D\bm{R}_{\bm{x}}^B(\bm{0}_{\mathcal{E}})[\bm{v}]= \left.\frac{d}{d\alpha}\bm{c}^B(\alpha)\right|_{\alpha=0}= \bm{v}.
\]
Consequently,
\[
\bm{R}_{\bm{x}}^B(\bm{0}_{\mathcal{E}})=\bm{x},\qquad D\bm{R}_{\bm{x}}^B(\bm{0}_{\mathcal{E}})=\mathrm{Id}_{\mathrm{T}_{\bm{x}}\mathcal{M}^B}.
\]
Replacing \(\mathcal{M}\) with \(\mathcal{M}^B\) and \( V_{\bm{x}}\) with \( V_{\bm{x}}^B\) in~\eqref{retraction property 1}, we conclude that \(\bm{R}_{\bm{x}}^B\) defines a $\mathcal{C}^2$ local retraction of \(\mathcal{M}^B\) at \(\bm{x}\).

\subsection{Projection on \texorpdfstring{$\mathcal{M}^B$}{}}

\noindent Inspired by Theorem~\ref{thm:retraction second-order derivative}, we construct a projection operator from $\mathcal{E}$ onto the tangent space $\mathrm{T}_{\bm{x}} \mathcal{M}^B$, which acts as the identity on $\mathrm{T}_{\bm{x}} \mathcal{M}^B$, as stated in the following proposition:

\begin{proposition}\label{prop:projection to TxMb}
    For any $\bm{x} \in \mathcal{M}^B$, the linear operator 
    \begin{equation}\label{eqn:PxB def}
        \bm{P}_{\bm{x}}^B := \bm{P_x} - \bm{P_x} B^* L_{\bm{x}}^{-1} B \bm{P_x}
    \end{equation} 
    defined on $\mathcal{E}$ is a projection operator onto $\mathrm{T}_{\bm{x}} \mathcal{M}^B$. Specifically, it satisfies
    \begin{equation}\label{eqn:PxBv in TxMB}
        \bm{P}_{\bm{x}}^B\bm{u} \in \mathrm{T}_{\bm{x}} \mathcal{M}^B, \quad \forall \bm{u} \in \mathcal{E},
    \end{equation}
    and acts as the identity map on $\mathrm{T}_{\bm{x}} \mathcal{M}^B$
    \begin{equation}\label{eqn:PxBv=v}
        \bm{P}_{\bm{x}}^B\bm{v} = \bm{v}, \quad \forall \bm{v} \in \mathrm{T}_{\bm{x}} \mathcal{M}^B.
    \end{equation}
    Furthermore, if $\bm{P_x}$ is an orthogonal projection, then $\bm{P}_{\bm{x}}^B$ is also an orthogonal projection.
\end{proposition}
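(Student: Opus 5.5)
The plan is to verify the two defining properties \eqref{eqn:PxBv in TxMB} and \eqref{eqn:PxBv=v} directly from the formula \eqref{eqn:PxB def}. We use the characterization $\T_{\bm{x}}\mathcal M^B=\T_{\bm{x}}\mathcal M\cap\ker(B)$ from Corollary~\ref{cor:MB manifold} and the idempotence of $\bm P_{\bm{x}}$. Boundedness is immediate, since $L_{\bm{x}}^{-1}$ is bounded by the Bounded Inverse Theorem, as in the proof of Proposition~\ref{prop:submersion-induced-by-Lx}. Idempotence and surjectivity onto $\T_{\bm{x}}\mathcal M^B$ then follow from the two properties, because $(\bm P_{\bm{x}}^B)^2\bm u=\bm P_{\bm{x}}^B(\bm P_{\bm{x}}^B\bm u)=\bm P_{\bm{x}}^B\bm u$.

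For \eqref{eqn:PxBv in TxMB}, fix $\bm u\in\mathcal E$. Both terms of $\bm P_{\bm{x}}^B\bm u$ lie in the range of $\bm P_{\bm{x}}$, so $\bm P_{\bm{x}}^B\bm u\in\T_{\bm{x}}\mathcal M$. Applying $B$ gives
\[
B\bm P_{\bm{x}}^B\bm u=B\bm P_{\bm{x}}\bm u-(B\bm P_{\bm{x}}B^*)L_{\bm{x}}^{-1}B\bm P_{\bm{x}}\bm u=B\bm P_{\bm{x}}\bm u-B\bm P_{\bm{x}}\bm u=0_{\mathcal Y},
\]
so $\bm P_{\bm{x}}^B\bm u\in\ker(B)$. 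For \eqref{eqn:PxBv=v}, take $\bm v\in\T_{\bm{x}}\mathcal M^B$. Then $\bm P_{\bm{x}}\bm v=\bm v$ and $B\bm v=0_{\mathcal Y}$, so the correction term vanishes and $\bm P_{\bm{x}}^B\bm v=\bm v$.

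For the orthogonal case, assume $\bm P_{\bm{x}}=\bm P_{\bm{x}}^*$. A bounded idempotent is an orthogonal projection iff it is self-adjoint, so it suffices to show $(\bm P_{\bm{x}}^B)^*=\bm P_{\bm{x}}^B$. First, $L_{\bm{x}}^*=B\bm P_{\bm{x}}^*B^*=L_{\bm{x}}$, hence $(L_{\bm{x}}^{-1})^*=L_{\bm{x}}^{-1}$. Then
\[
(\bm P_{\bm{x}}B^*L_{\bm{x}}^{-1}B\bm P_{\bm{x}})^*=\bm P_{\bm{x}}^*B^*(L_{\bm{x}}^{-1})^*B\bm P_{\bm{x}}^*=\bm P_{\bm{x}}B^*L_{\bm{x}}^{-1}B\bm P_{\bm{x}},
\]
which gives self-adjointness. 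Equivalently, one can check that $\bm u-\bm P_{\bm{x}}^B\bm u\perp\T_{\bm{x}}\mathcal M^B$. For $\bm v\in\T_{\bm{x}}\mathcal M^B$ we have $(\bm u-\bm P_{\bm{x}}\bm u,\bm v)_{\mathcal E}=0$ by orthogonality of $\bm P_{\bm{x}}$, and $(\bm P_{\bm{x}}B^*\eta,\bm v)_{\mathcal E}=(\eta,B\bm P_{\bm{x}}\bm v)_{\mathcal Y}=(\eta,B\bm v)_{\mathcal Y}=0$.

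The main point needing care is the meaning of "orthogonal". In this infinite-dimensional setting we rely on the standard fact that a bounded idempotent is an orthogonal projection iff it is self-adjoint. We also need that $L_{\bm{x}}^{-1}$ is self-adjoint whenever $L_{\bm{x}}$ is. The remaining steps are direct algebraic verifications.
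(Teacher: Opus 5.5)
Your proposal is correct and follows essentially the same route as the paper. You verify that the range lies in $\T_{\bm{x}}\mathcal M\cap\ker(B)$ via $B\bm P_{\bm{x}}B^*L_{\bm{x}}^{-1}=\mathrm{Id}_{\mathcal Y}$, check that the operator is the identity on $\T_{\bm{x}}\mathcal M^B$, deduce idempotence from these two facts, and in the orthogonal case obtain self-adjointness from $L_{\bm{x}}^*=L_{\bm{x}}$ and hence $(L_{\bm{x}}^{-1})^*=L_{\bm{x}}^{-1}$. Your additional direct check that $\bm u-\bm P_{\bm{x}}^B\bm u\perp\T_{\bm{x}}\mathcal M^B$ is a valid equivalent confirmation that the paper does not include.
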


\begin{proof}
    First, we verify that the range of \( \bm{P}_{\bm{x}}^B \) lies in \( \mathrm{T}_{\bm{x}} \mathcal{M}^B \). For any \( \bm{u} \in \mathcal{E} \), the definition of \( \bm{P}_{\bm{x}}^B \) gives
    \[
        \bm{P}_{\bm{x}}^B\bm{u} = \bm{P_x} \left( \bm{u} - B^* L_{\bm{x}}^{-1} B \bm{P_x} \bm{u} \right) \in \mathrm{T}_{\bm{x}} \mathcal{M}.
    \]
    Furthermore, applying the linear operator \( B \) yields
    \[
        B \bm{P}_{\bm{x}}^B \bm{u} = B \bm{P_x} \bm{u} - B\bm{P_x}B^* L_{\bm{x}}^{-1} B \bm{P_x} \bm{u} = B \bm{P_x} \bm{u} - L_{\bm{x}} L_{\bm{x}}^{-1} B \bm{P_x} \bm{u} = \bm{0}_{\mathcal{Y}}.
    \]
    Together, these imply that \( \bm{P}_{\bm{x}}^B\bm{u} \in \mathrm{T}_{\bm{x}} \mathcal{M}^B \), which proves \eqref{eqn:PxBv in TxMB}.

  \noindent  Next, we show that \( \bm{P}_{\bm{x}}^B \) acts as the identity mapping on \( \mathrm{T}_{\bm{x}} \mathcal{M}^B \). Let \( \bm{v} \in \mathrm{T}_{\bm{x}} \mathcal{M}^B \subset \mathrm{T}_{\bm{x}} \mathcal{M} \). By the definition, we have \( \bm{P_x} \bm{v} = \bm{v} \) and \( B \bm{v} = \bm{0}_{\mathcal{Y}} \). Thus,
    \[
        \bm{P}_{\bm{x}}^B\bm{v} = \left( \bm{P_x} - \bm{P_x} B^* L_{\bm{x}}^{-1} B \bm{P_x} \right) \bm{v} = \bm{v} - \bm{P_x} B^* L_{\bm{x}}^{-1} B \bm{v} = \bm{v} - \bm{P_x} B^* L_{\bm{x}}^{-1} \bm{0}_{\mathcal{Y}} = \bm{v},
    \]
    which proves \eqref{eqn:PxBv=v}.

 \noindent   With \eqref{eqn:PxBv in TxMB} and \eqref{eqn:PxBv=v} established, the idempotence of \( \bm{P}_{\bm{x}}^B \) follows immediately. For any \( \bm{u} \in \mathcal{E} \), letting \( \bm{v} = \bm{P}_{\bm{x}}^B \bm{u} \in \mathrm{T}_{\bm{x}} \mathcal{M}^B \), we obtain
    \[
        (\bm{P}_{\bm{x}}^B)^2 \bm{u} = \bm{P}_{\bm{x}}^B (\bm{P}_{\bm{x}}^B \bm{u}) = \bm{P}_{\bm{x}}^B \bm{v} = \bm{v} = \bm{P}_{\bm{x}}^B \bm{u}.
    \]
    This implies \( (\bm{P}_{\bm{x}}^B)^2 = \bm{P}_{\bm{x}}^B \), confirming that \( \bm{P}_{\bm{x}}^B \) is a projection operator.

 \noindent   Finally, we establish that \( \bm{P}_{\bm{x}}^B \) is an orthogonal projection by proving it is self-adjoint. Since \( \bm{P_x} \) is an orthogonal projection, it is self-adjoint, i.e., \( \bm{P_x}^* = \bm{P_x} \). Consequently, the operator \( L_{\bm{x}} = B \bm{P_x} B^* \) is also self-adjoint because
    \[
        L_{\bm{x}}^* = (B \bm{P_x} B^*)^* = (B^*)^* \bm{P_x}^* B^* = B \bm{P_x} B^* = L_{\bm{x}}.
    \]
    This implies that its inverse \( L_{\bm{x}}^{-1} \) is self-adjoint as well. Taking the adjoint of \( \bm{P}_{\bm{x}}^B \), we obtain
    \begin{equation*}
        (\bm{P}_{\bm{x}}^B)^* = \bm{P_x}^* - (\bm{P_x} B^* L_{\bm{x}}^{-1} B \bm{P_x})^* 
        = \bm{P_x}^* - \bm{P_x}^* B^* (L_{\bm{x}}^{-1})^* (B^*)^* \bm{P_x}^* 
        = \bm{P_x} - \bm{P_x} B^* L_{\bm{x}}^{-1} B \bm{P_x} 
        = \bm{P}_{\bm{x}}^B.
    \end{equation*}
    Since \( \bm{P}_{\bm{x}}^B \) is both idempotent and self-adjoint, we conclude that it is an orthogonal projection.
\end{proof}

\begin{remark}
The surjective projection \(\bm{P}_{\bm{x}}^B\) defines a mapping from the ambient space \(\mathcal{E}\) onto the tangent space \(\mathrm{T}_{\bm{x}}\mathcal{M}^B\). It plays a crucial role in transferring elements of \(\mathcal{E}\) to the tangent space, which enables us to represent and characterize the tangent space through ambient-space quantities. This projection further underlies the construction of the projection-induced gradient and, consequently, our algorithmic design.
\end{remark}

\section{Computing the Derivatives}\label{sec:derivatives computations}

\noindent We first fix notation for the derivatives of $J$ on $\mathcal{E}$. By the Riesz representation theorem, we identify \(\mathcal{E}^*\) with \(\mathcal{E}\). Under this identification, the first- and second-order Fr\'echet derivatives of \(J\) at \(\bm{x}\in\mathcal{E}\) are represented by the gradient \(\grad J(\bm{x})\in\mathcal{E}\) and the Hessian \(\hess J(\bm{x})\in\mathcal{L}(\mathcal{E};\mathcal{E})\). Throughout, a mapping is called \emph{bounded} if it maps bounded sets to bounded sets; recall from Section~\ref{subsec:model} that
\[
\grad J:\mathcal{E}\to\mathcal{E}
\quad\text{and}\quad
\hess J:\mathcal{E}\to\mathcal{L}(\mathcal{E};\mathcal{E})
\]
are assumed to be bounded in this sense.

\noindent A standard approach in manifold optimization is to transfer the objective functional locally from \(\mathcal{M}^B\) to the tangent space \(\mathrm{T}_{\bm{x}}\mathcal{M}^B\) at the current iterate \(\bm{x}\), via the retraction \(\bm{R}_{\bm{x}}^B\) defined in \eqref{retraction RxB}. The resulting \emph{pullback} of \(J^B\), also called the \emph{lifted objective functional}, is the mapping \(J_{\bm{x}}^B:V_{\bm{x}}^B\to\mathbb{R}\) defined by
\begin{equation}\label{eqn:JxB}
    J_{\bm{x}}^B(\bm{\xi})
    :=
    J^B\!\left(\bm{R}_{\bm{x}}^B(\bm{\xi})\right).
\end{equation}
Since \(\mathrm{T}_{\bm{x}}\mathcal{M}^B\) is a linear subspace of \(\mathcal{E}\), this formulation permits the use of standard unconstrained vector-space optimization techniques. In Theorem~\ref{thm: taylor}, we derive explicit expressions for the gradient and Hessian of \(J_{\bm{x}}^B\) near the origin, which yield the linear and quadratic models underlying line-search and trust-region methods on \(\mathcal{M}^B\).

\noindent We next introduce the derivative operators that will be used throughout the analysis. Let \(\grad J_{\bm{x}}^B(\bm{0}_{\mathcal{E}})\in\mathrm{T}_{\bm{x}}\mathcal{M}^B\) denote the gradient of \(J_{\bm{x}}^B\) at \(\bm{0}_{\mathcal{E}}\), and let \(\grad J^B(\bm{x})\in\mathrm{T}_{\bm{x}}\mathcal{M}^B\) denote the Riemannian gradient of \(J^B\) at \(\bm{x}\). Then, by the proof of Proposition~3.59 in \cite{boumal2023introduction}, for every \(\bm{x}\in\mathcal{M}^B\), we have
\[
\grad J^B(\bm{x})=\grad J_{\bm{x}}^B(\bm{0}_{\mathcal{E}}).
\]

\noindent Let \(\hess J_{\bm{x}}^B(\bm{0}_{\mathcal{E}}):\mathrm{T}_{\bm{x}}\mathcal{M}^B\to\mathrm{T}_{\bm{x}}\mathcal{M}^B\) denote the Hessian of \(J_{\bm{x}}^B\) at \(\bm{0}_{\mathcal{E}}\). For each \(\bm{x}\in\mathcal{M}^B\), define the bilinear form \(b_{\bm{x}}:\mathcal{E}\times\mathcal{E}\to\mathbb{R}\)
by
\[
b_{\bm{x}}(\bm{u}_1,\bm{u}_2):=\left(\grad J(\bm{x}),\Lambda_{\bm{x}}
    \bigl(
        \ell_{\bm{x}}
        (\bm{P}_{\bm{x}}\bm{u}_1,\bm{P}_{\bm{x}}\bm{u}_2)
    \bigr)
\right)_{\mathcal{E}},
\qquad
\forall\,\bm{u}_1,\bm{u}_2\in\mathcal{E}.
\]

\noindent Since \(\grad J(\bm{x})\), \(B\), \(L_{\bm{x}}^{-1}\), \(\ell_{\bm{x}}\), and \(\bm{P}_{\bm{x}}\) are bounded, the bilinear form \(b_{\bm{x}}\) is bounded; moreover, \(b_{\bm{x}}\) is symmetric since \(\ell_{\bm{x}}\) is. Therefore, by the Riesz representation theorem, there exists a unique bounded self-adjoint operator \(\bm{\pi}_{\bm{x}}:\mathcal{E}\to\mathcal{E}\) such that
\[
\left(
    \bm{\pi}_{\bm{x}}[\bm{u}_1],
    \bm{u}_2
\right)_{\mathcal{E}}
=
b_{\bm{x}}(\bm{u}_1,\bm{u}_2),
\qquad
\forall\,\bm{u}_1,\bm{u}_2\in\mathcal{E}.
\]

\noindent In particular, for any \(\bm{u},\bm{v}\in\mathrm{T}_{\bm{x}}\mathcal{M}^B\subset\mathrm{T}_{\bm{x}}\mathcal{M},\) we have
\[
\left(
    \bm{\pi}_{\bm{x}}[\bm{u}],
    \bm{v}
\right)_{\mathcal{E}}
=
\left(
    \grad J(\bm{x}),
    \Lambda_{\bm{x}}
    \bigl(
        \ell_{\bm{x}}(\bm{u},\bm{v})
    \bigr)
\right)_{\mathcal{E}}.
\]

\noindent With the above notation in place, the following theorem characterizes the derivatives and Taylor expansion of ${J}_{\bm{x}}^B$.

\begin{theorem}\label{thm: taylor}
	For any $\bm{x} \in \mathcal{M}^B$ and $\bm{v} \in \mathrm{T}_{\bm{x}} \mathcal{M}^B$, we have:
	\begin{enumerate}
		\item [(1).] The gradient of $J^B$ at $\bm{x}$ and of ${J}^B_{\bm{x}}$ at $\bm{0}_{\mathcal{E}}$ in the direction $\bm{v}$ satisfy
		\begin{equation}\label{eqn:first-order derivative of JB and JxB}
		\left(\grad J^B(\bm{x}), \bm{v}\right)_{\bm{x}} = \left(\grad J^B_{\bm{x}}(\bm{0}_{\mathcal{E}}), \bm{v}\right)_{\bm{x}} = \left( \grad J(\bm{x}), \bm{v} \right)_{\mathcal{E}}.
		\end{equation}
		
		\item [(2).] The Hessian of \(J^B_{\bm{x}}\) at \(\bm{0}_{\mathcal{E}}\) is self-adjoint with respect to \((\cdot,\cdot)_{\bm{x}}\). Moreover, for any \(\bm{v},\bm{u}\in \T_{\bm{x}}\mathcal{M}^B\), it is given by
		\begin{equation}\label{eqn:second-order direvative of JxB}
		\left(\hess J^B_{\bm{x}}(\bm{0}_{\mathcal{E}})[\bm{v}], \bm{u}\right)_{\bm{x}} = \left( \left[ \hess J(\bm{x}) + \bm{\pi}_{\bm{x}} \right] \bm{v}, \bm{u} \right)_{\mathcal{E}}.
		\end{equation}
		
		\item [(3).] The Taylor expansion of ${J}^B_{\bm{x}}$ at $\bm{0}_{\mathcal{E}}$ along the direction $\bm{v}$ can be written as
		$$
		\begin{aligned}
			J^B\left(\bm{R}^B_{\bm{x}}(\bm{v})\right) = {J}^B_{\bm{x}}(\bm{v}) &= J^B(\bm{x}) + \left(\grad J^B(\bm{x}), \bm{v}\right)_{\bm{x}} + \frac{1}{2} \left(\hess J^B_{\bm{x}}(\bm{0}_{\mathcal{E}})[\bm{v}], \bm{v}\right)_{\bm{x}} + o\left(\|\bm{v}\|^2_{\bm{x}}\right)\\
			&= J(\bm{x}) + \left( \grad J(\bm{x}), \bm{v} \right)_{\mathcal{E}} + \frac{1}{2} \left( \left[ \hess J(\bm{x}) + \bm{\pi}_{\bm{x}} \right] \bm{v}, \bm{v} \right)_{\mathcal{E}} + o\left(\norm{\bm{v}}^2_{\mathcal{E}}\right).
		\end{aligned}
		$$
	\end{enumerate}
\end{theorem}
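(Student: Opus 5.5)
The plan is to compute the first two derivatives of the scalar function \(\phi(\alpha):=J_{\bm{x}}^B(\alpha\bm{v})=J(\bm{c}^B(\alpha))\) at \(\alpha=0\), using the curve expansion from Theorem~\ref{thm:retraction second-order derivative}. Since \(J\) is \(\mathcal{C}^2\) on \(\mathcal{E}\) and \(\bm{R}_{\bm{x}}^B\) is \(\mathcal{C}^2\) on \(V_{\bm{x}}^B\), the pullback \(J_{\bm{x}}^B\) is \(\mathcal{C}^2\) near \(\bm{0}_{\mathcal{E}}\). Its gradient and Hessian at the origin are therefore determined by \(\phi'(0)\) and \(\phi''(0)\), the latter through polarization of the quadratic form.

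For (1), I would apply the chain rule:
\[
\phi'(0)=\bigl(\grad J(\bm{x}),\tfrac{d}{d\alpha}\bm{c}^B(0)\bigr)_{\mathcal{E}}=(\grad J(\bm{x}),\bm{v})_{\mathcal{E}},
\]
using \eqref{eqn:retraction RxB value at 0 and first-order derivative}. Since \(\phi'(0)=DJ_{\bm{x}}^B(\bm{0}_{\mathcal{E}})[\bm{v}]=(\grad J_{\bm{x}}^B(\bm{0}_{\mathcal{E}}),\bm{v})_{\bm{x}}\), and \(\grad J^B(\bm{x})=\grad J_{\bm{x}}^B(\bm{0}_{\mathcal{E}})\) as recalled from \cite{boumal2023introduction}, both identities in \eqref{eqn:first-order derivative of JB and JxB} follow.

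For (2), the second-order chain rule gives
\[
\phi''(0)=\bigl(\hess J(\bm{x})\bm{v},\bm{v}\bigr)_{\mathcal{E}}+\bigl(\grad J(\bm{x}),\ddot{\bm{c}}^B(0)\bigr)_{\mathcal{E}}.
\]
By \eqref{second-order derivative RxB}, \(\ddot{\bm{c}}^B(0)=\Lambda_{\bm{x}}(\ell_{\bm{x}}(\bm{v},\bm{v}))\). Since \(\bm{P}_{\bm{x}}\bm{v}=\bm{v}\), the second term equals \(b_{\bm{x}}(\bm{v},\bm{v})=(\bm{\pi}_{\bm{x}}\bm{v},\bm{v})_{\mathcal{E}}\). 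Hence \(D^2J_{\bm{x}}^B(\bm{0}_{\mathcal{E}})[\bm{v},\bm{v}]=([\hess J(\bm{x})+\bm{\pi}_{\bm{x}}]\bm{v},\bm{v})_{\mathcal{E}}\) for every \(\bm{v}\in\T_{\bm{x}}\mathcal{M}^B\).

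The main obstacle is getting from this diagonal identity to the bilinear identity \eqref{eqn:second-order direvative of JxB}. Both sides are symmetric bilinear forms on \(\T_{\bm{x}}\mathcal{M}^B\):
\begin{itemize}
\item the left side by Schwarz's theorem for the \(\mathcal{C}^2\) map \(J_{\bm{x}}^B\) on the linear space \(\T_{\bm{x}}\mathcal{M}^B\);
\item the right side because \(\hess J(\bm{x})\) and \(\bm{\pi}_{\bm{x}}\) are self-adjoint.
\end{itemize}
Polarization \(Q(\bm{u},\bm{v})=\tfrac14[Q(\bm{u}+\bm{v})-Q(\bm{u}-\bm{v})]\) therefore extends the equality to all pairs \(\bm{u},\bm{v}\).

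Self-adjointness of \(\hess J_{\bm{x}}^B(\bm{0}_{\mathcal{E}})\) with respect to \((\cdot,\cdot)_{\bm{x}}\) is then immediate: it is the Riesz representative, on the closed subspace \(\T_{\bm{x}}\mathcal{M}^B\) with the restricted metric, of a bounded symmetric bilinear form. Concretely, \(\hess J_{\bm{x}}^B(\bm{0}_{\mathcal{E}})\) is the orthogonal compression onto \(\T_{\bm{x}}\mathcal{M}^B\) of \(\hess J(\bm{x})+\bm{\pi}_{\bm{x}}\).

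For (3), I would write the second-order Taylor expansion of the \(\mathcal{C}^2\) function \(J_{\bm{x}}^B\) on the normed space \(\T_{\bm{x}}\mathcal{M}^B\) at the origin. The remainder is \(o(\|\bm{v}\|_{\bm{x}}^2)\), by continuity of \(D^2J_{\bm{x}}^B\) and the mean-value/integral form of the remainder. I would then substitute (1) and (2), together with \(J^B_{\bm{x}}(\bm{0}_{\mathcal{E}})=J(\bm{x})\) and \(\|\cdot\|_{\bm{x}}=\|\cdot\|_{\mathcal{E}}\). This yields both lines of the displayed expansion.
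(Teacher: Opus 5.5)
Your proposal is correct and follows essentially the same route as the paper: differentiate \(\alpha\mapsto J(\bm{c}^B(\alpha))\) twice at \(\alpha=0\) using Theorem~\ref{thm:retraction second-order derivative}, identify the curvature term with \((\bm{\pi}_{\bm{x}}\bm{v},\bm{v})_{\mathcal{E}}\), pass from the diagonal identity to the bilinear one by symmetry and polarization, and then read off the second-order Taylor expansion. The differences are only cosmetic. You polarize with \(\bm{u}\pm\bm{v}\) rather than with \(\bm{u}+\bm{v}\), \(\bm{u}\), \(\bm{v}\). You also make explicit the step \(\bm{P}_{\bm{x}}\bm{v}=\bm{v}\), and you describe the Hessian as the orthogonal compression of \(\hess J(\bm{x})+\bm{\pi}_{\bm{x}}\) onto \(\T_{\bm{x}}\mathcal{M}^B\).
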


\begin{proof}
    The case \(\bm v=\bm 0_{\mathcal E}\) is immediate. Let \(\bm v\in\T_{\bm x}\mathcal M^B\setminus\{\bm 0_{\mathcal E}\}\) be arbitrary. Recall from Theorem~\ref{thm:retraction second-order derivative} that we define
    $\bm{c}^B(\alpha) := \bm{R}_{\bm{x}}^B(\alpha \bm{v}),
    \alpha \in \Big[0,\frac{r_{\bm{x}}}{\|\bm{v}\|_{\bm{x}}}\Big).$ Consider the function $g(\alpha) = {J}^B_{\bm{x}}(\alpha\bm{v}) = J^B\left(\bm{c}^B(\alpha)\right) = J\left(\bm{c}^B(\alpha)\right)$. Then, the first- and second-order derivatives of $g$ with respect to $\alpha$ are given, respectively, by
	$$
	g'(\alpha) = \left( \grad J\left(\bm{c}^B(\alpha)\right), \frac{d}{d\alpha}\bm{c}^B(\alpha) \right)_{\mathcal{E}},
	$$
	and
	\begin{equation}\label{eqn:second-order directional derivative of JxB}
	    g''(\alpha) = \left( \hess J\left(\bm{c}^B(\alpha)\right)\left[\frac{d}{d\alpha}\bm{c}^B(\alpha)\right], \frac{d}{d\alpha}\bm{c}^B(\alpha) \right)_{\mathcal{E}} + \left( \grad J\left(\bm{c}^B(\alpha)\right), \frac{d^2}{d\alpha^2}\bm{c}^B(\alpha) \right)_{\mathcal{E}}.
	\end{equation}
	By substituting $\alpha=0$ and using $\bm{c}^B(0) = \bm{x}$, $\frac{d}{d\alpha}\bm{c}^B(\alpha)\mid_{\alpha=0} = \bm{v}$, and $\frac{d^2}{d\alpha^2}\bm{c}^B(\alpha)\mid_{\alpha=0} = \Lambda_{\bm{x}}\left(\ell_{\bm{x}}(\bm{v}, \bm{v})\right)$, we obtain
	$$
	\left(\grad J^B_{\bm{x}}(\bm{0}_{\mathcal{E}}), \bm{v}\right)_{\bm{x}} = g'(0) = \left( \grad J(\bm{x}), \bm{v} \right)_{\mathcal{E}},
	$$
	and
	\begin{multline*}
	    	\left(\hess J^B_{\bm{x}}(\bm{0}_{\mathcal{E}})[\bm{v}], \bm{v}\right)_{\bm{x}} = g''(0) = \left( \hess J(\bm{x})[\bm{v}], \bm{v} \right)_{\mathcal{E}} + \left( \grad J(\bm{x}), \Lambda_{\bm{x}}\left(\ell_{\bm{x}}(\bm{v}, \bm{v})\right) \right)_{\mathcal{E}}\\
	= \left( \left[ \hess J(\bm{x}) + \bm{\pi}_{\bm{x}} \right] \bm{v}, \bm{v} \right)_{\mathcal{E}}.
	\end{multline*}
    Since \(\hess J^B_{\bm{x}}(\bm{0}_{\mathcal{E}})\) is the Hessian of a scalar-valued $\mathcal{C}^2$ function, it is self-adjoint with respect to \((\cdot,\cdot)_{\bm{x}}\). Moreover, \(\hess J(\bm{x})\) is self-adjoint and, by the symmetry of \(\ell_{\bm{x}}\), so is \(\bm{\pi}_{\bm{x}}\). Define
    \[
    H_{\bm{x}}(\bm{v},\bm{u}):=\left(\hess J^B_{\bm{x}}(\bm{0}_{\mathcal{E}})[\bm{v}],\bm{u}\right)_{\bm{x}},\qquad Q_{\bm{x}}(\bm{v},\bm{u}):=\left(\left[\hess J(\bm{x})+\bm{\pi}_{\bm{x}}\right]\bm{v},\bm{u}\right)_{\mathcal{E}}.
    \]
    Both forms are symmetric. Applying the preceding identity to \(\bm{v}+\bm{u}\), \(\bm{v}\), and \(\bm{u}\), polarization yields
    \begin{multline*}
    2H_{\bm{x}}(\bm{v},\bm{u})=H_{\bm{x}}(\bm{v}+\bm{u},\bm{v}+\bm{u})-H_{\bm{x}}(\bm{v},\bm{v})-H_{\bm{x}}(\bm{u},\bm{u})\\
    =Q_{\bm{x}}(\bm{v}+\bm{u},\bm{v}+\bm{u})-Q_{\bm{x}}(\bm{v},\bm{v})-Q_{\bm{x}}(\bm{u},\bm{u})=2Q_{\bm{x}}(\bm{v},\bm{u}).
    \end{multline*}
    This proves \eqref{eqn:second-order direvative of JxB}. With the first- and second-order identities established, applying the Taylor expansion gives
	$$
	\begin{aligned}
		{J}^B_{\bm{x}}(\bm{v}) &= {J}^B_{\bm{x}}(\bm{0}_{\mathcal{E}}) + \left(\grad J^B_{\bm{x}}(\bm{0}_{\mathcal{E}}), \bm{v}\right)_{\bm{x}} + \frac{1}{2}\left(\hess J^B_{\bm{x}}(\bm{0}_{\mathcal{E}})[\bm{v}], \bm{v}\right)_{\bm{x}} + o\left(\norm{\bm{v}}_{\bm{x}}^2\right) \\
        &= J^B(\bm{x}) + \left(\grad J^B(\bm{x}), \bm{v}\right)_{\bm{x}} + \frac{1}{2} \left(\hess J^B_{\bm{x}}(\bm{0}_{\mathcal{E}})[\bm{v}], \bm{v}\right)_{\bm{x}} + o\left(\|\bm{v}\|^2_{\bm{x}}\right)\\
		&= J(\bm{x}) + \left( \grad J(\bm{x}), \bm{v} \right)_{\mathcal{E}} + \frac{1}{2} \left( \left[ \hess J(\bm{x}) + \bm{\pi}_{\bm{x}} \right] \bm{v}, \bm{v} \right)_{\mathcal{E}} + o\left(\norm{\bm{v}}^2_{\mathcal{E}}\right).
	\end{aligned}
	$$
	This completes the proof.
\end{proof}

\section{Foundations of Optimization Theory}\label{sec:properties}

\noindent The preceding sections establish fundamental analytic and geometric properties of \(\mathcal{M}^B\), including its tangent spaces, projection operators, retractions, and derivative formulas. Now, we develop uniform estimates on an arbitrary bounded subset \(X\subset\mathcal{M}^B\). Such uniform estimates are essential for establishing the foundation of optimization theory intrinsically on \(\mathcal{M}^B\): it is precisely what enables standard techniques in Euclidean spaces to be formulated on \(\mathcal{M}^B\) with rigorous convergence guarantees.


\noindent First, we introduce a uniform coercivity condition for \(L_{\bm{x}}=B\bm{P}_{\bm{x}}B^*\), which provides uniform control of \(L_{\bm{x}}^{-1}\) on bounded subsets. We also clarify its relation to the usual uniform submersion estimate, showing their equivalence in the orthogonal projection case. Second, under mild boundedness assumptions on the projection field \(\bm{P}_{\bm{x}}\) and the base retraction \(\bm{R}_{\bm{x}}\), we prove the existence of a uniform radius for the implicit correction \(\delta_{\bm{x}}\) and the retraction \(\bm{R}_{\bm{x}}^B\), together with the required \(\mathcal{C}^2\) regularity on bounded sets. Third, we establish uniform bounds for the second-order directional derivatives of \(\bm{R}_{\bm{x}}^B\). Finally, we derive uniform regularity properties of the lifted objective \(J_{\bm{x}}^B=J^B\circ\bm{R}_{\bm{x}}^B\), including radial Lipschitz \(C^1\) continuity, a uniform quadratic upper estimate, and uniform bounds for \(\grad J^B(\bm{x})\) and \(\hess J_{\bm{x}}^B(\bm{0}_{\mathcal{E}})\). Together, these results complete the analytic toolkit needed to treat optimization on \(\mathcal{M}^B\) at the Hilbert space level.

\subsection{Uniform Coercivity and Uniform Submersion}
\label{subsec:uniform-coercivity-submersion}

\noindent Pointwise invertibility of \(L_{\bm x}\), as required in Assumption~\ref{asmp:Lx invertible}, is sufficient to establish the  Hilbert-submanifold structure of \(\mathcal M^B\). However, for the subsequent algorithmic construction and convergence analysis, pointwise invertibility is not sufficient. We need uniform control of \(L_{\bm x}^{-1}\) on bounded subsets of $\mathcal{M}^B$. This motivates the following strengthened assumption.

\begin{assumption}[Uniform coercivity]
\label{asmp:Lx bounded below}
For every bounded set \(S\subset\mathcal M^B\), there exists a constant
\(C_S>0\) such that
\begin{equation}\label{eqn:Lx bounded below}
    (L_{\bm x}\delta,\delta)_{\mathcal Y}
    \ge
    C_S\|\delta\|_{\mathcal Y}^2,
    \qquad
    \forall\,\bm x\in S,\ \delta\in\mathcal Y .
\end{equation}
\end{assumption}

\noindent Since \(\bm P_{\bm x}\) is not assumed to be orthogonal, the operator \(L_{\bm x}=B\bm P_{\bm x}B^*\) is generally not self-adjoint. Nevertheless, \eqref{eqn:Lx bounded below} is a coercivity condition for the bilinear form \((\delta,\eta)\mapsto (L_{\bm x}\delta,\eta)_{\mathcal Y}\). Hence, by the Lax--Milgram theorem, \(L_{\bm x}\) is boundedly invertible and
\begin{equation}\label{eqn:Lx-1 bounded above}
    \|L_{\bm x}^{-1}\|
    \le
    \frac{1}{C_S},
    \qquad
    \forall\,\bm x\in S .
\end{equation}

\begin{remark}[Algorithmic Significance of Assumption~\ref{asmp:Lx bounded below}]
Assumption~\ref{asmp:Lx bounded below} is fundamental for the subsequent analysis. In particular, the uniform bound \eqref{eqn:Lx-1 bounded above} on \(\norm{L_{\bm{x}}^{-1}}\) yields uniform control of the projection operators and of the derivatives of the retractions constructed on \(\mathcal{M}^B\). Such bounds are among the key ingredients in establishing the convergence analysis of Riemannian optimization algorithms on \(\mathcal{M}^B\). Thus, Assumption~\ref{asmp:Lx bounded below} provides not only an analytic nondegeneracy condition but also the quantitative estimates needed for rigorous algorithmic analysis.
\end{remark}

\noindent We next give a uniform submersion estimate. This is a different condition in general, because it involves the adjoint of \(S_{\bm x}= B|_{\T_{\bm{x}}\mathcal{M}}\), rather than the extrinsic operator \(\bm P_{\bm x}\).

\begin{definition}[Uniform submersion]
\label{def:uniform-submersion}
Let \(S\subset\mathcal M^B\) be bounded. We say that \(B|_{\mathcal M}\) is a
uniform submersion on \(S\) if there exists \(\gamma_S>0\) such that
\begin{equation}\label{eqn:uniform submersion estimate}
    \|S_{\bm x}^*\delta\|_{\mathcal E}
    \ge
    \gamma_S\|\delta\|_{\mathcal Y},
    \qquad
    \forall\,\bm x\in S,\ \delta\in\mathcal Y .
\end{equation}
Equivalently,
\begin{equation}\label{eqn:tangent space angle}
    \inf_{\delta\in\mathcal Y,\ \delta\ne 0}
    \sup_{\bm v\in\T_{\bm x}\mathcal M,\ \bm v\ne 0}
    \frac{|(\delta,B\bm v)_{\mathcal Y}|}
    {\|\delta\|_{\mathcal Y}\|\bm v\|_{\mathcal E}}
    \ge
    \gamma_S,
    \qquad
    \forall\,\bm x\in S .
\end{equation}
\end{definition}

\begin{remark}[Geometric interpretation of uniform submersion]
\label{rem:angle-interpretation-uniform-submersion}
Fix \(\bm x\in\mathcal M^B\). The sine of the transverse angle between
\(\T_{\bm x}\mathcal M\) and \(\ker(B)\) is given by
\[
    \sin\theta(\T_{\bm x}\mathcal M,\ker(B))
    :=
    \inf_{\bm z\in (\ker(B))^\perp,\ \bm z\ne 0}
    \sup_{\bm v\in \T_{\bm x}\mathcal M,\ \bm v\ne 0}
    \frac{|(\bm z,\bm v)_{\mathcal E}|}
    {\|\bm z\|_{\mathcal E}\|\bm v\|_{\mathcal E}} .
\]
Since \(B\) is surjective, \(\operatorname{Ran}(B^*)=(\ker(B))^\perp\).
Hence, writing \(\bm z=B^*\delta\), we obtain
\[
    \sin\theta(\T_{\bm x}\mathcal M,\ker(B))
    =
    \inf_{\delta\in\mathcal Y,\ \delta\ne 0}
    \sup_{\bm v\in \T_{\bm x}\mathcal M,\ \bm v\ne 0}
    \frac{|(B^*\delta,\bm v)_{\mathcal E}|}
    {\|B^*\delta\|_{\mathcal E}\|\bm v\|_{\mathcal E}}  
    =
    \inf_{\delta\in\mathcal Y,\ \delta\ne 0}
    \sup_{\bm v\in \T_{\bm x}\mathcal M,\ \bm v\ne 0}
    \frac{|(\delta,B\bm v)_{\mathcal Y}|}
    {\|B^*\delta\|_{\mathcal E}\|\bm v\|_{\mathcal E}} .
\]
Since \(B\) is surjective, \(B^*\) is bounded from below; that is, there exists a constant \(C_B>0\) such that
\[
    \|B^*\delta\|_{\mathcal E}\ge C_B\|\delta\|_{\mathcal Y},
    \qquad \forall\,\delta\in\mathcal Y .
\]
Moreover, \(\|B^*\delta\|_{\mathcal E}\le \|B\|\|\delta\|_{\mathcal Y}\).
Therefore, it holds that
\[
    C_B\sin\theta(\T_{\bm x}\mathcal M,\ker(B))
    \le
    \inf_{\delta\in\mathcal Y,\ \delta\ne 0}
    \sup_{\bm v\in\T_{\bm x}\mathcal M,\ \bm v\ne 0}
    \frac{|(\delta,B\bm v)_{\mathcal Y}|}
    {\|\delta\|_{\mathcal Y}\|\bm v\|_{\mathcal E}}  
    \le
    \|B\|\sin\theta(\T_{\bm x}\mathcal M,\ker(B)).
\]
Thus, the submersion inf--sup constant is equivalent, up to constants depending only on \(B\), to the sine of the transverse angle. Consequently, uniform submersion on a bounded set \(S\subset\mathcal M^B\) is equivalent to the angle \eqref{eqn:tangent space angle} being uniformly bounded away from zero on \(S\).
\end{remark}

\noindent The uniform coercivity assumption and the standard uniform submersion estimate are generally distinct. They coincide, however, when the projection $\bm{P_x}$ is orthogonal.

\begin{proposition}[Orthogonal projection case]
\label{prop:equivalence-uniform-coercivity-submersion}
Suppose that \(\bm P_{\bm x}\) is the orthogonal projection onto
\(\T_{\bm x}\mathcal M\) for every \(\bm x\in\mathcal M^B\). Then
Assumption~\ref{asmp:Lx bounded below} is equivalent to the uniform submersion estimate \eqref{eqn:uniform submersion estimate}. More precisely, on each bounded set \(S\subset\mathcal M^B\), the constants satisfy \(\gamma_S=\sqrt{C_S}\) and \(C_S=\gamma_S^2\).
\end{proposition}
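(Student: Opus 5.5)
The plan is to reduce both conditions to a single identity valid in the orthogonal case. First I will identify the adjoint of \(S_{\bm x}=B|_{\T_{\bm x}\mathcal M}\colon \T_{\bm x}\mathcal M\to\mathcal Y\), where \(\T_{\bm x}\mathcal M\) carries the restricted inner product of \(\mathcal E\). For \(\delta\in\mathcal Y\) and \(\bm v\in\T_{\bm x}\mathcal M\), self-adjointness of \(\bm P_{\bm x}\) and \(\bm P_{\bm x}\bm v=\bm v\) give
\[
(\delta,S_{\bm x}\bm v)_{\mathcal Y}=(B^*\delta,\bm v)_{\mathcal E}=(B^*\delta,\bm P_{\bm x}\bm v)_{\mathcal E}=(\bm P_{\bm x}B^*\delta,\bm v)_{\mathcal E}.
\]
Since \(\bm P_{\bm x}B^*\delta\in\T_{\bm x}\mathcal M\), this shows \(S_{\bm x}^*=\bm P_{\bm x}B^*\), as already noted in Remark~\ref{rem:equivalence of transversal and invertible}.

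Next I will compute the quadratic form of \(L_{\bm x}\). Using \(\bm P_{\bm x}=\bm P_{\bm x}^*=\bm P_{\bm x}^2\),
\[
(L_{\bm x}\delta,\delta)_{\mathcal Y}=(\bm P_{\bm x}B^*\delta,B^*\delta)_{\mathcal E}=(\bm P_{\bm x}^2B^*\delta,B^*\delta)_{\mathcal E}=\|\bm P_{\bm x}B^*\delta\|_{\mathcal E}^2=\|S_{\bm x}^*\delta\|_{\mathcal E}^2 .
\]
This identity holds for every \(\bm x\in\mathcal M^B\) and every \(\delta\in\mathcal Y\).

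Then I fix a bounded set \(S\subset\mathcal M^B\) and prove both implications.
\begin{itemize}
\item If \eqref{eqn:Lx bounded below} holds with constant \(C_S\), the identity gives \(\|S_{\bm x}^*\delta\|_{\mathcal E}^2\ge C_S\|\delta\|_{\mathcal Y}^2\) for all \(\bm x\in S\) and \(\delta\in\mathcal Y\). Taking square roots yields \eqref{eqn:uniform submersion estimate} with \(\gamma_S=\sqrt{C_S}\).
\item If \eqref{eqn:uniform submersion estimate} holds with constant \(\gamma_S\), squaring gives \((L_{\bm x}\delta,\delta)_{\mathcal Y}\ge\gamma_S^2\|\delta\|_{\mathcal Y}^2\). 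This is \eqref{eqn:Lx bounded below} with \(C_S=\gamma_S^2\).
\end{itemize}
Since \(S\) is arbitrary, the two conditions are equivalent on every bounded subset.

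There is no real obstacle. The only point requiring care is the adjoint identification: \(S_{\bm x}^*\) must be taken with respect to the inner product that \(\T_{\bm x}\mathcal M\) inherits from \(\mathcal E\), and \(\bm P_{\bm x}B^*\delta\) must actually lie in \(\T_{\bm x}\mathcal M\). Both facts follow directly from the orthogonality of \(\bm P_{\bm x}\).
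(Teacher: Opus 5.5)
Your proposal is correct and follows essentially the same route as the paper: identify \(S_{\bm x}^*=\bm P_{\bm x}B^*\) from the self-adjointness of \(\bm P_{\bm x}\), deduce \((L_{\bm x}\delta,\delta)_{\mathcal Y}=\|S_{\bm x}^*\delta\|_{\mathcal E}^2\), and then take square roots or square to pass between the two estimates. The only difference is that you write out the adjoint verification and the idempotence step explicitly, which the paper leaves implicit.
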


\begin{proof}
In the orthogonal case, \(\bm P_{\bm x}=\bm P_{\bm x}^*\), and therefore
\(S_{\bm x}^*=\bm P_{\bm x}B^*\). Hence
\[
    L_{\bm x}
    =
    B\bm P_{\bm x}B^*
    =
    S_{\bm x}S_{\bm x}^* .
\]
It follows that, for every \(\delta\in\mathcal Y\),
\[
    (L_{\bm x}\delta,\delta)_{\mathcal Y}
    =
    \|S_{\bm x}^*\delta\|_{\mathcal E}^2 .
\]
Thus, the lower bound
\((L_{\bm x}\delta,\delta)_{\mathcal Y}\ge
C_S\|\delta\|_{\mathcal Y}^2\) is equivalent to
\(\|S_{\bm x}^*\delta\|_{\mathcal E}\ge
\sqrt{C_S}\|\delta\|_{\mathcal Y}\).
\end{proof}

\noindent In the remainder of this paper, we work under Assumption~\ref{asmp:Lx bounded below}. Consequently, Assumption~\ref{asmp:Lx invertible} holds automatically, the feasible set \(\mathcal M^B\) is a Hilbert submanifold by Corollary~\ref{cor:MB manifold}, and the uniform inverse estimate \eqref{eqn:Lx-1 bounded above} is available on every bounded subset relevant to the subsequent analysis.

\subsection{Additional Assumptions on \texorpdfstring{$\bm{P_x}$}{} and \texorpdfstring{$\bm{R_x}$}{}}

\noindent With the uniform coercivity framework in place, we next record two mild assumptions on the ambient projection \(\bm{P}_{\bm{x}}\) and the base retraction \(\bm{R}_{\bm{x}}\). These assumptions will allow us to derive uniform operator bounds on bounded subsets and will be invoked repeatedly in both the algorithm design and the convergence analysis.

\begin{assumption}\label{asmp:boundedness of P}
Let \(\bm{P_x}:\mathcal{E}\to \T_{\bm{x}}\mathcal{M}\subset \mathcal{E}\) denote the projection operator introduced above. Throughout this section, each \(\bm{P_x}\) is regarded as an element of \(\mathcal{L}(\mathcal{E},\mathcal{E})\). We assume that the mapping
\begin{equation}\label{eqn:boundedness of P}
    \bm{P}:\mathcal{M}\to \mathcal{L}(\mathcal{E},\mathcal{E}),\qquad \bm{x}\mapsto \bm{P_x},
\end{equation}
is bounded and continuous.
\end{assumption}

\noindent For \(\bm{x}\in\mathcal{M}\) and \(\bm{u}\in\T_{\bm{x}}\mathcal{M}\), let \(D\bm{R}_{\bm{x}}(\bm{u}):\T_{\bm{x}}\mathcal{M}\to\mathcal{E}\) and \(D^2\bm{R}_{\bm{x}}(\bm{u}):\T_{\bm{x}}\mathcal{M}\times\T_{\bm{x}}\mathcal{M}\to\mathcal{E}\) denote the first- and second-order Fréchet derivatives of \(\bm{R}_{\bm{x}}\) at \(\bm{u}\), respectively. Norms of $D\bm{R}_{\bm{x}}(\bm{u})$ and $D^2\bm{R}_{\bm{x}}(\bm{u})$ are the corresponding operator norms, with \(\T_{\bm{x}}\mathcal{M}\) endowed with \((\cdot,\cdot)_{\mathcal{M},\bm{x}}\) and \(\mathcal{E}\) with \((\cdot,\cdot)_{\mathcal{E}}\). Since \((\cdot,\cdot)_{\mathcal{M},\bm{x}}\) is the restriction of \((\cdot,\cdot)_{\mathcal{E}}\) to \(\T_{\bm{x}}\mathcal{M}\), these coincide with the ambient operator norms.

\begin{assumption}\label{asmp:uniform bound of first second-order derivative of retraction}
	Let \(S \subset \mathcal{M}\) be any bounded set. Then there exist constants
\(\sigma>0\), \(K_1>0\), and \(K_2>0\), depending only on \(S\), such that for every
\(\bm{x}\in S\) and every \(\bm{u}\in \T_{\bm{x}}\mathcal{M}\) with \(\|\bm{u}\|_{\mathcal{E}}\le \sigma\), the following
bounds hold:
\begin{equation}\label{eqn:boundedness of DR and D2R}
     \|D\bm{R}_{\bm{x}}(\bm{u})\| \le K_1 \quad \hbox{and} \quad
        \|D^2\bm{R}_{\bm{x}}(\bm{u})\| \le K_2.
    \end{equation}
\end{assumption}

\noindent From this point onward, let 
\begin{equation}\label{eqn:X}
    X \subset \mathcal{M}^B
\end{equation}
be an arbitrary bounded subset. All constants appearing below are understood to depend only on \(X\), unless otherwise stated. Since \(X\) is bounded, there exists a constant \(C_1>0\) such that
\begin{equation}\label{eqn:boundedness of X}
\norm{\bm{x}}_{\mathcal{E}}\le C_1 \quad \forall \bm{x}\in X.
\end{equation}

\noindent By Assumption~\ref{asmp:boundedness of P}, there exists a constant \(K_P>0\) such that
\begin{equation}\label{eqn:Px bounded above X}
	\norm{\bm{P_x}}\le K_P,\quad \forall \bm{x}\in X.
\end{equation}

\noindent Moreover, by \eqref{eqn:boundedness of DR and D2R}, there exist constants \(\sigma, K_1, K_2>0\), depending only on \(X\), such that for every
\(\bm{x}\in X\) and every \(\bm{u}\in \mathrm{T}_{\bm{x}}\mathcal{M}\) with \(\|\bm{u}\|_{\mathcal{E}}\le \sigma\), we have
\begin{equation}\label{eqn:DR D2R bounded above X}
    \|D \bm{R_x}(\bm{u})\| \le K_1 \quad \hbox{and} \quad \|D^2 \bm{R_x}(\bm{u})\| \le K_2.
\end{equation}

\noindent Finally, by \eqref{eqn:Lx-1 bounded above}, there exists a constant \(K_L>0\) such that
\begin{equation}\label{eqn:Lx-1 bounded above X}
	\norm{L_{\bm{x}}^{-1}}\le K_L,\quad \forall \bm{x}\in X.
\end{equation}

\subsection{Uniform Radius for \texorpdfstring{$\bm{R_x}^B$}{}}
\noindent The previous subsection provides the main ingredients for algorithm design. To carry out the convergence analysis, however, we also need uniform local control of the implicit correction \(\delta_{\bm{x}}\) that underlies the retraction \(\bm{R}_{\bm{x}}^B\). We therefore begin by establishing a uniform existence radius, independent of the base point \(\bm{x}\in X\). 
We first show that there exists a uniform radius $\Delta_s>0$ such that for any $\bm{x}\in X$, the implicit function $\delta_{\bm{x}}$ exists on $V_{\bm{x}}^B(\Delta_s)$.
\begin{theorem}\label{thm:uniform radius for solution}
	For any point $\bm{x}\in X$, recall the definition of $F_{\bm{x}}$ in \eqref{eqn:Fx} and the definition of $V_{\bm{x}}^B(\cdot)$ in \eqref{eqn:VxB}. Define
    \begin{equation}\label{eqn:Sr definition}
        S_r:=\{\delta \in \mathcal{Y} \mid \norm{\delta}_{\mathcal{Y}}\le r\}.
    \end{equation}
    There exist $\Delta_s>0$ and $r>0$, independent of $\bm{x}$, and a unique continuous map $\delta_{\bm{x}}:V_{\bm{x}}^B(\Delta_s)\to S_r$, such that for $\bm{v}\in V_{\bm{x}}^B(\Delta_s)$, we have
	\begin{equation}\label{eqn:existence of solution}
		F_{\bm{x}}(\bm{v},\delta_{\bm{x}}(\bm{v}))=B\bm{R}_{\bm{x}}\left(\bm{v}-\bm{P}_{\bm{x}}B^* \delta_{\bm{x}}(\bm{v})\right)=c,
	\end{equation}
    and 
    \begin{equation}\label{eqn:delta0=0}
    \delta_{\bm{x}}\bigl(\bm{0}_{\mathcal{E}}\bigr)=0_{\mathcal{Y}}.
    \end{equation}
\end{theorem}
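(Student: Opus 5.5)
We reformulate \eqref{eqn:existence of solution} as a fixed-point problem and apply the Banach contraction principle with constants depending only on $X$. For fixed $\bm{x}\in X$ and $\bm{v}\in V_{\bm{x}}^B(\Delta_s)$, we define
\[
\Phi_{\bm{x},\bm{v}}(\delta):=\delta+L_{\bm{x}}^{-1}\bigl(F_{\bm{x}}(\bm{v},\delta)-c\bigr),\qquad \delta\in S_r .
\]
Zeros of $F_{\bm{x}}(\bm{v},\cdot)-c$ are exactly fixed points of $\Phi_{\bm{x},\bm{v}}$, since $L_{\bm{x}}^{-1}$ is injective. We choose $r$ and $\Delta_s$ small enough that for $\|\bm{v}\|_{\mathcal{E}}<\Delta_s$ and $\|\delta\|_{\mathcal{Y}}\le r$, the argument $\bm{u}:=\bm{v}-\bm{P_x}B^*\delta\in\T_{\bm{x}}\mathcal{M}$ satisfies $\|\bm{u}\|_{\mathcal{E}}\le\Delta_s+K_P\|B\|r\le\sigma$. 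The uniform bounds \eqref{eqn:DR D2R bounded above X} are then available along the whole segment from $\bm{0}_{\mathcal{E}}$ to $\bm{u}$.

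The first step is a uniform contraction estimate. By \eqref{eqn:partial Fx delta} and \eqref{eqn:partial F =-Lx}, the derivative is
\[
D_\delta\Phi_{\bm{x},\bm{v}}(\delta)=L_{\bm{x}}^{-1}B\bigl[D\bm{R_x}(\bm{0}_{\mathcal{E}})-D\bm{R_x}(\bm{u})\bigr]\bm{P_x}B^*.
\]
The mean value inequality with the bound $K_2$ on $D^2\bm{R_x}$ gives
\[
\|D_\delta\Phi_{\bm{x},\bm{v}}(\delta)\|\le K_L\|B\|^2K_PK_2\,(\Delta_s+K_P\|B\|r).
\]
We pick $\Delta_s$ and $r$ so that this quantity is at most $1/2$. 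Then $\Phi_{\bm{x},\bm{v}}$ is $\tfrac12$-Lipschitz on the convex set $S_r$.

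The second step is the self-mapping property $\Phi_{\bm{x},\bm{v}}(S_r)\subset S_r$. We write $\|\Phi(\delta)\|\le\|\Phi(\delta)-\Phi(0)\|+\|\Phi(0)\|\le r/2+\|\Phi(0)\|$. Since $B\bm{v}=0$, we have $F_{\bm{x}}(\bm{v},0)-c=B(\bm{R_x}(\bm{v})-\bm{x}-\bm{v})$. A second-order Taylor estimate then gives $\|\Phi(0)\|\le\tfrac12K_L\|B\|K_2\|\bm{v}\|^2\le\tfrac12K_L\|B\|K_2\Delta_s^2$. (Alternatively, the first-order bound $K_L\|B\|(K_1+1)\Delta_s$ also works once $\Delta_s$ is shrunk.) We fix $r$ first, then shrink $\Delta_s$ relative to $r$ so that both $\|\Phi(0)\|\le r/2$ and the contraction condition hold. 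This ordering of constants, keeping every constant dependent only on $K_P,K_L,K_1,K_2,\sigma,\|B\|$, is the main bookkeeping obstacle, because the two requirements pull in opposite directions unless $\Delta_s$ is taken small relative to $r$.

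The final step applies Banach's fixed-point theorem to obtain, for each $\bm{v}$, a unique $\delta_{\bm{x}}(\bm{v})\in S_r$ solving \eqref{eqn:existence of solution}. Continuity in $\bm{v}$ follows from the standard parametric contraction argument, since $\Phi_{\bm{x},\bm{v}}(\delta)$ is continuous in $\bm{v}$ uniformly in $\delta$:
\[
\|\delta_{\bm{x}}(\bm{v})-\delta_{\bm{x}}(\bm{w})\|\le 2\sup_{\delta\in S_r}\|\Phi_{\bm{x},\bm{v}}(\delta)-\Phi_{\bm{x},\bm{w}}(\delta)\|.
\]
For $\bm{v}=\bm{0}_{\mathcal{E}}$, the choice $\delta=0$ is a fixed point because $\bm{R_x}(\bm{0}_{\mathcal{E}})=\bm{x}$ and $B\bm{x}=c$, so uniqueness gives \eqref{eqn:delta0=0}. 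Uniqueness in $S_r$ also shows that $\delta_{\bm{x}}$ agrees with the $\mathcal{C}^2$ implicit function from Lemma~\ref{lem:construction of retraction} near the origin.
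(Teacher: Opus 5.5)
Your proposal is correct and follows essentially the same route as the paper. Your map $\Phi_{\bm{x},\bm{v}}$ coincides with the paper's $\Psi_{\bm{x},\bm{v}}$, and the remaining steps match: the mean-value contraction estimate via $K_2$ with $\norm{\bm{v}-\bm{P_x}B^*\delta}_{\mathcal{E}}\le\Delta_s+K_P\norm{B}r\le\sigma$, the ordering of constants ($r$ fixed first, then $\Delta_s$ chosen relative to $r$), Banach's theorem, the parametric continuity argument, and $\delta_{\bm{x}}(\bm{0}_{\mathcal{E}})=0_{\mathcal{Y}}$ by uniqueness. The only cosmetic difference is your optional quadratic bound on $\norm{\Phi_{\bm{x},\bm{v}}(0_{\mathcal{Y}})}_{\mathcal{Y}}$ using $B\bm{v}=0_{\mathcal{Y}}$, whereas the paper uses the first-order bound $K_1K_L\norm{B}\Delta_s\le r/2$.
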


\begin{proof}
	Define the constants
	\begin{equation}\label{eqn:def of r}
		r:=\min\left\{\frac{\sigma}{2K_P\norm{B}},\frac{1}{4K_2K_LK_P^2\norm{B}^3}\right\},
	\end{equation}
	\begin{equation}\label{eqn:def of barDelta 1}
		\Delta_s:=\min\left\{\frac{\sigma}{2},\frac{1}{4K_2K_LK_P\norm{B}^2},\frac{r}{2K_1K_L\norm{B}}\right\}.
	\end{equation}
	Fix an arbitrary $\bm{x}\in X$. Subsequently, choose an arbitrary $\bm{v}\in V^B_{\bm{x}}(\Delta_s)$. Define the mapping $\Psi_{\bm{x},\bm{v}} : \mathcal{Y} \to \mathcal{Y}$ by
	\[
	\Psi_{\bm{x},\bm{v}}(\delta)=L^{-1}_{\bm{x}}\left(	B\bm{R}_{\bm{x}}\left(\bm{v}-\bm{P}_{\bm{x}}B^* \delta\right)-c+L_{\bm{x}}\delta\right).
	\]
	The existence of a solution to equation \eqref{eqn:existence of solution} is equivalent to $\Psi_{\bm{x},\bm{v}}$ possessing a fixed point. For any \(\delta_1,\delta_2 \in S_r\), we compute
	\begin{equation}\label{eqn:psi1-psi2}
	    \Psi_{\bm{x},\bm{v}}(\delta_1)-\Psi_{\bm{x},\bm{v}}(\delta_2)=L^{-1}_{\bm{x}}\bigg(	B\bm{R}_{\bm{x}}\left(\bm{v}-\bm{P}_{\bm{x}}B^* \delta_1\right)-B\bm{R}_{\bm{x}}\left(\bm{v}-\bm{P}_{\bm{x}}B^* \delta_2\right)+L_{\bm{x}}(\delta_1-\delta_2)\bigg).
	\end{equation}
	Recalling \eqref{retraction property 1}, we have
    \(D\bm{R}_{\bm{x}}(\bm{0}_{\mathcal{E}})=\mathrm{Id}_{\mathrm{T}_{\bm{x}}\mathcal{M}}\).
    Hence, by the definition of \(L_{\bm{x}}\) in Assumption~\ref{asmp:Lx invertible},
    \begin{equation}\label{eqn:Lx=BDR0PB}
        B\circ D\bm{R}_{\bm{x}}(\bm{0}_{\mathcal{E}})\circ \bm{P}_{\bm{x}}B^*
    = B\bm{P}_{\bm{x}}B^*
    = L_{\bm{x}}.
    \end{equation}
    \noindent To estimate the term in parentheses in \eqref{eqn:psi1-psi2}, let
    \[
        \bm w_\theta:=\bm v-\bm P_{\bm x}B^*\bigl(\theta\delta_1+(1-\theta)\delta_2\bigr),\qquad \theta\in(0,1).
    \]
    Using \eqref{eqn:Lx=BDR0PB} and \cite[Theorem 7.2-2]{ciarlet2013linear}, we obtain
    \begin{multline}\label{eqn:Mean Value Theorem used for BDRx}
    \norm{
    B\bm R_{\bm x}\left(\bm v-\bm P_{\bm x}B^*\delta_1\right)-B\bm R_{\bm x}\left(\bm v-\bm P_{\bm x}B^*\delta_2\right)+L_{\bm x}(\delta_1-\delta_2)
    }_{\mathcal Y}  \\
    \quad \le\sup_{\theta\in(0,1)}\norm{
    B\circ \left(D\bm R_{\bm x}(\bm w_\theta)-D\bm R_{\bm x}(\bm 0_{\mathcal E})\right)\circ \bm P_{\bm x}B^*
    }
    \norm{\delta_1-\delta_2}_{\mathcal Y}. 
    \end{multline}
    Moreover, it follows from \eqref{eqn:Px bounded above X}, \eqref{eqn:def of r}, and \eqref{eqn:def of barDelta 1} that
    \begin{equation}\label{eqn:wtheta estimate}
    \norm{\bm w_\theta}_{\mathcal{E}}\le\norm{\bm v}_{\mathcal{E}}+\norm{\bm P_{\bm x}}\norm{B}\bigl\|\theta\delta_1+(1-\theta)\delta_2\bigr\|_{\mathcal Y}\le\Delta_s+K_P\norm{B}r\le \sigma,
    \end{equation}
    Then, with \eqref{eqn:Px bounded above X}, \eqref{eqn:DR D2R bounded above X}, and \eqref{eqn:wtheta estimate}, for every \(\theta\in(0,1)\), it holds that
    \begin{multline}
    \norm{B\circ \left(D\bm R_{\bm x}(\bm w_\theta)-D\bm R_{\bm x}(\bm 0_{\mathcal E})\right)\circ \bm P_{\bm x}B^*} \le\norm{B}^2 K_P\norm{D\bm R_{\bm x}(\bm w_\theta)-D\bm R_{\bm x}(\bm 0_{\mathcal E})} \\
    \le\norm{B}^2 K_P\sup_{s\in(0,1)}\norm{D^2\bm R_{\bm x}(s\bm w_\theta)}\norm{\bm w_\theta}_{\mathcal{E}} \le\norm{B}^2 K_P K_2 \norm{\bm w_\theta}_{\mathcal{E}}.
    \label{eqn:DR-difference-bound}
    \end{multline}
    Also, it follows from \eqref{eqn:def of r}, \eqref{eqn:def of barDelta 1}, and  \eqref{eqn:wtheta estimate} that 
    \begin{equation}\label{eqn:B2KPK2 wtheta estimate}
    \norm{B}^2 K_P K_2 \norm{\bm w_\theta}_{\mathcal{E}}\le\norm{B}^2K_PK_2(\Delta_s+K_P\norm{B}r)
    \le \frac{1}{2K_L}.
    \end{equation}
    Combining \eqref{eqn:Mean Value Theorem used for BDRx}, \eqref{eqn:DR-difference-bound}, and \eqref{eqn:B2KPK2 wtheta estimate} yields
    \begin{equation}\label{eqn:contraction for BRx}
    \norm{
    B\bm R_{\bm x}\left(\bm v-\bm P_{\bm x}B^*\delta_1\right)
    -
    B\bm R_{\bm x}\left(\bm v-\bm P_{\bm x}B^*\delta_2\right)
    +
    L_{\bm x}(\delta_1-\delta_2)
    }_{\mathcal Y}
    \le
    \frac{1}{2K_L}\norm{\delta_1-\delta_2}_{\mathcal Y}.
    \end{equation}
    
	\noindent Then, by \eqref{eqn:Lx-1 bounded above X}, \eqref{eqn:psi1-psi2}, and \eqref{eqn:contraction for BRx}, we have
	\begin{equation}\label{eqn:contration of Psi}
		\norm{\Psi_{\bm{x},\bm{v}}(\delta_1)-\Psi_{\bm{x},\bm{v}}(\delta_2)}_{\mathcal{Y}}\le K_L \left( \frac{1}{2K_L}\norm{\delta_1-\delta_2}_{\mathcal{Y}} \right) = \frac{1}{2}\norm{\delta_1-\delta_2}_{\mathcal{Y}},
	\end{equation}
	which implies that $\Psi_{\bm{x},\bm{v}}$ is a contraction mapping on $S_r$. Recalling \eqref{retraction property 1}, we have $B\bm{R}_{\bm{x}}(\bm{0}_{\mathcal{E}})=B\bm{x}=c$. Therefore, by \eqref{eqn:DR D2R bounded above X}, \eqref{eqn:Lx-1 bounded above X}, \eqref{eqn:def of barDelta 1}, and the Mean Value Theorem, we have
	\begin{equation}\label{eqn:Psi 0 upper bound}
    \begin{aligned}
    \norm{\Psi_{\bm{x},\bm{v}}(0_{\mathcal{Y}})}_{\mathcal{Y}}&=\norm{L^{-1}_{\bm{x}}\left(B\bm{R}_{\bm{x}}\left(\bm{v}\right)-B\bm{R}_{\bm{x}}(\bm{0}_{\mathcal{E}})\right)}_{\mathcal{Y}}\\
    &\le \norm{L^{-1}_{\bm{x}}}\norm{B}\sup_{\theta\in(0,1)}\norm{D\bm{R}_{\bm{x}}\left(\theta\bm{v}\right)}\norm{\bm{v}}_{\bm{x}}\\
    &\le K_L\norm{B}K_1\norm{\bm{v}}_{\bm{x}}\le K_1K_L\norm{B}\Delta_s\le \frac{r}{2}.
    \end{aligned}
	\end{equation}
	Combining \eqref{eqn:contration of Psi} and \eqref{eqn:Psi 0 upper bound}, for any $\delta\in S_r$, we have
	\[
	\norm{\Psi_{\bm{x},\bm{v}}(\delta)}_{\mathcal{Y}} \le \norm{\Psi_{\bm{x},\bm{v}}(\delta) - \Psi_{\bm{x},\bm{v}}(0_{\mathcal{Y}})}_{\mathcal{Y}} + \norm{\Psi_{\bm{x},\bm{v}}(0_{\mathcal{Y}})}_{\mathcal{Y}} \le \frac{1}{2}\norm{\delta}_{\mathcal{Y}} + \frac{r}{2} \le \frac{r}{2} + \frac{r}{2} = r,
	\]
	which implies that $\Psi_{\bm{x},\bm{v}}$ maps $S_r$ into itself (i.e., $\Psi_{\bm{x},\bm{v}}:S_r\to S_r$). Thus, by Banach's Fixed Point Theorem, there exists a unique $\delta \in S_r$ such that $\Psi_{\bm{x},\bm{v}}(\delta)=\delta$.
	
	\noindent This establishes the uniqueness and existence of the implicit function $\bm{v}\in V_{\bm{x}}^B(\Delta_s)\mapsto \delta_{\bm{x}}(\bm{v})\in S_r$ satisfying \eqref{eqn:existence of solution} and thus
    \begin{equation}\label{eqn:Psixv deltaxv = deltaxv}
        \Psi_{\bm{x},\bm{v}}(\delta_{\bm{x}}(\bm{v}))=\delta_{\bm{x}}(\bm{v}),\, \forall \bm{v}\in V_{\bm{x}}^B(\Delta_s).
    \end{equation}
    Notice that $\Psi_{\bm{x},\bm{0}_{\mathcal{E}}}(0_{\mathcal{Y}})=L^{-1}_{\bm{x}}\left(	B\bm{R}_{\bm{x}}\left(\bm{0}_{\mathcal{E}}\right)-c\right)=0_{\mathcal{Y}}$, hence $\delta_{\bm{x}}\bigl(\bm{0}_{\mathcal{E}}\bigr)=0_{\mathcal{Y}}$. For continuity, let $\bm{v},\bm{v}_0\in V_{\bm{x}}^B(\Delta_s)$. By \eqref{eqn:contration of Psi}, and \eqref{eqn:Psixv deltaxv = deltaxv}, we have
    \[
    \begin{aligned}
        \norm{\delta_{\bm{x}}(\bm{v})-\delta_{\bm{x}}(\bm{v}_0)}_{\mathcal{Y}}&\le\norm{\Psi_{\bm{x},\bm{v}}(\delta_{\bm{x}}(\bm{v}))-\Psi_{\bm{x},\bm{v}}(\delta_{\bm{x}}(\bm{v}_0))}_{\mathcal{Y}}+\norm{\Psi_{\bm{x},\bm{v}}(\delta_{\bm{x}}(\bm{v}_0))-\Psi_{\bm{x},\bm{v}_0}(\delta_{\bm{x}}(\bm{v}_0))}_{\mathcal{Y}}\\
        &\le \frac{1}{2}\norm{\delta_{\bm{x}}(\bm{v})-\delta_{\bm{x}}(\bm{v}_0)}_{\mathcal{Y}}+\norm{\Psi_{\bm{x},\bm{v}}(\delta_{\bm{x}}(\bm{v}_0))-\Psi_{\bm{x},\bm{v}_0}(\delta_{\bm{x}}(\bm{v}_0))}_{\mathcal{Y}},
    \end{aligned}
    \]
    which implies
    \[
    \begin{aligned}
        \norm{\delta_{\bm{x}}(\bm{v})-\delta_{\bm{x}}(\bm{v}_0)}_{\mathcal{Y}}&\le 2\norm{\Psi_{\bm{x},\bm{v}}(\delta_{\bm{x}}(\bm{v}_0))-\Psi_{\bm{x},\bm{v}_0}(\delta_{\bm{x}}(\bm{v}_0))}_{\mathcal{Y}}\\
        &= 2\norm{L^{-1}_{\bm{x}}\left[	B\bm{R}_{\bm{x}}\left(\bm{v}-\bm{P}_{\bm{x}}B^* \delta_{\bm{x}}(\bm{v}_0)\right)-B\bm{R}_{\bm{x}}\left(\bm{v}_0-\bm{P}_{\bm{x}}B^* \delta_{\bm{x}}(\bm{v}_0)\right)\right]}_{\mathcal{Y}}.
    \end{aligned}
    \]
    Since $\bm{R}_{\bm{x}}$ is continuous and both $L^{-1}_{\bm{x}}$ and $B$ are bounded linear operators, we have
    \[
    \lim_{\bm{v}\to\bm{v}_0}\norm{L^{-1}_{\bm{x}}\left[	B\bm{R}_{\bm{x}}\left(\bm{v}-\bm{P}_{\bm{x}}B^* \delta_{\bm{x}}(\bm{v}_0)\right)-B\bm{R}_{\bm{x}}\left(\bm{v}_0-\bm{P}_{\bm{x}}B^* \delta_{\bm{x}}(\bm{v}_0)\right)\right]}_{\mathcal{Y}}=0,
    \]
    which shows that $\delta_{\bm{x}}\in \mathcal{C}(V_{\bm{x}}^B(\Delta_s),S_r)$. This completes the proof.
\end{proof} 

\noindent We next show that the implicit function $\delta_{\bm{x}}$ is $\mathcal{C}^2$ on $V_{\bm{x}}^B(\Delta_s)$.
\begin{theorem}\label{thm:uniform radius for C2}
    For any $\bm{x}\in X$, $\delta_{\bm{x}}$ is $\mathcal{C}^2$ on $V^B_{\bm{x}}(\Delta_s)$.
\end{theorem}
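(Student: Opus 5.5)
The plan is to prove the statement locally: around each point \(\bm{v}_0\in V^B_{\bm{x}}(\Delta_s)\), apply the Implicit Function Theorem \cite[Theorem 7.13-1]{ciarlet2013linear} to \(F_{\bm{x}}\) at \((\bm{v}_0,\delta_{\bm{x}}(\bm{v}_0))\). Uniqueness in Theorem~\ref{thm:uniform radius for solution}, together with continuity of \(\delta_{\bm{x}}\), will then identify the local \(\mathcal{C}^2\) implicit function with \(\delta_{\bm{x}}\) near \(\bm{v}_0\). Since \(\mathcal{C}^2\) regularity is a local property, this gives the claim on the whole of \(V^B_{\bm{x}}(\Delta_s)\).

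\textbf{Step 1: invertibility of the partial derivative.} Fix \(\bm{v}_0\in V^B_{\bm{x}}(\Delta_s)\) and set \(\delta_0:=\delta_{\bm{x}}(\bm{v}_0)\in S_r\). As noted in the proof of Lemma~\ref{lem:construction of retraction}, \(F_{\bm{x}}\) is \(\mathcal{C}^2\) on \(\T_{\bm{x}}\mathcal{M}^B\times\mathcal{Y}\). Its partial derivative is given by \eqref{eqn:partial Fx delta}:
\[
\frac{\partial F_{\bm{x}}}{\partial\delta}(\bm{v}_0,\delta_0)=-B\,D\bm{R}_{\bm{x}}(\bm{w}_0)\,\bm{P}_{\bm{x}}B^*,\qquad \bm{w}_0:=\bm{v}_0-\bm{P}_{\bm{x}}B^*\delta_0 .
\]
I will write this as \(-L_{\bm{x}}(\mathrm{Id}_{\mathcal{Y}}-E)\), where
\[
E:=L_{\bm{x}}^{-1}B\bigl(D\bm{R}_{\bm{x}}(\bm{0}_{\mathcal{E}})-D\bm{R}_{\bm{x}}(\bm{w}_0)\bigr)\bm{P}_{\bm{x}}B^*,
\]
using \eqref{eqn:Lx=BDR0PB}. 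The estimates \eqref{eqn:wtheta estimate}--\eqref{eqn:B2KPK2 wtheta estimate} apply verbatim with \(\theta\delta_1+(1-\theta)\delta_2\) replaced by \(\delta_0\in S_r\). They give \(\|\bm{w}_0\|_{\mathcal{E}}\le\sigma\) and
\[
\|E\|\le K_L\cdot\tfrac{1}{2K_L}=\tfrac12 .
\]
A Neumann series then shows that \(\mathrm{Id}_{\mathcal{Y}}-E\) is invertible with inverse of norm at most \(2\). Hence \(\partial_\delta F_{\bm{x}}(\bm{v}_0,\delta_0)\) is an isomorphism of \(\mathcal{Y}\). This is the key point: the uniform choice of \(r\) and \(\Delta_s\) guarantees invertibility along the whole graph of \(\delta_{\bm{x}}\), not just at the origin.

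\textbf{Step 2: local \(\mathcal{C}^2\) implicit function.} The Implicit Function Theorem now yields neighborhoods \(U\ni\bm{v}_0\) and \(W\ni\delta_0\), and a \(\mathcal{C}^2\) map \(\tilde\delta:U\to W\). Its graph is exactly the solution set \(\{(\bm{v},\delta)\in U\times W: F_{\bm{x}}(\bm{v},\delta)=c\}\).

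\textbf{Step 3: identification, the main obstacle.} I must show \(\tilde\delta=\delta_{\bm{x}}\) near \(\bm{v}_0\). The difficulty is that the IFT solution is unique only in the small ball \(W\), while \(\delta_{\bm{x}}\) is unique in \(S_r\), and \(\delta_0\) may lie on the boundary of \(S_r\). There are two possible routes.

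\emph{First route.} By the continuity of \(\delta_{\bm{x}}\) proved in Theorem~\ref{thm:uniform radius for solution}, after shrinking \(U\) we have \(\delta_{\bm{x}}(\bm{v})\in W\) for \(\bm{v}\in U\). The IFT uniqueness then forces \(\delta_{\bm{x}}=\tilde\delta\) on \(U\).

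\emph{Second route, which avoids boundary issues.} Fixed points of \(\Psi_{\bm{x},\bm{v}}\) are exactly the solutions of \(F_{\bm{x}}(\bm{v},\delta)=c\). So whenever \(\tilde\delta(\bm{v})\in S_r\), uniqueness in \(S_r\) gives \(\tilde\delta(\bm{v})=\delta_{\bm{x}}(\bm{v})\). If needed, this can be secured by working in a slightly larger ball \(S_{r'}\) on which the contraction still holds; alternatively, one notes that \(\|\Psi_{\bm{x},\bm{v}}(\delta)\|\le r/2+\tfrac12\|\delta\|\) forces every fixed point to satisfy \(\|\delta\|\le r\).

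Either route gives \(\delta_{\bm{x}}\in\mathcal{C}^2(U)\). Since \(\bm{v}_0\) was arbitrary, \(\delta_{\bm{x}}\) is \(\mathcal{C}^2\) on \(V^B_{\bm{x}}(\Delta_s)\).
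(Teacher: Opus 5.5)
Your proposal is correct. Its key estimate is the paper's: the uniform choice of $r$ and $\Delta_s$ keeps $\partial_\delta F_{\bm{x}}$ within $1/(2K_L)$ of $-L_{\bm{x}}$, so it stays invertible along the graph of $\delta_{\bm{x}}$. The difference is in how the implicit function theorem is used.

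The paper proves the perturbation bound on all of $V^B_{\bm{x}}(\Delta_s)\times S_r$. It then reruns steps (iv)--(vii) of the proof of \cite[Theorem 7.13-1]{ciarlet2013linear} directly on the continuous map $\delta_{\bm{x}}$ already built by contraction. Those steps need only continuity of the solution map and invertibility of $\partial_\delta F_{\bm{x}}$ along its graph, so no identification question arises.

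You instead apply the theorem as a black box at each $(\bm{v}_0,\delta_{\bm{x}}(\bm{v}_0))$. You then transfer the conclusion using continuity of $\delta_{\bm{x}}$ and local uniqueness, which is your first route. This is fully rigorous and does not rely on the internal structure of a cited proof. It also makes explicit that the $\mathcal{C}^2$ map is the same $\delta_{\bm{x}}$ produced by the contraction argument, which the paper leaves implicit.

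The paper's formulation also records the uniform bound $\|(\partial_\delta F_{\bm{x}}(\bm{v},\delta))^{-1}\|\le 2K_L$. This bound is reused in \eqref{eqn:partialx v deltaxv inverse and upper bound} and in Theorem~\ref{thm:uniform boundedness of directional second-order retraction}. Your Neumann-series step gives the same bound, $\|(\mathrm{Id}_{\mathcal{Y}}-E)^{-1}L_{\bm{x}}^{-1}\|\le 2K_L$, and you should state it explicitly.

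One caveat concerns your second route. The claim that $\|\Psi_{\bm{x},\bm{v}}(\delta)\|_{\mathcal{Y}}\le r/2+\frac12\|\delta\|_{\mathcal{Y}}$ forces every fixed point into $S_r$ is circular. That inequality was derived only for $\delta\in S_r$, because the contraction estimate needs $\|\bm{w}_\theta\|_{\mathcal{E}}\le\sigma$. The claim is also unnecessary. For the fixed point itself, $\|\delta_0\|_{\mathcal{Y}}\le 2\|\Psi_{\bm{x},\bm{v}_0}(0_{\mathcal{Y}})\|_{\mathcal{Y}}\le 2K_1K_L\|B\|\,\|\bm{v}_0\|_{\bm{x}}<2K_1K_L\|B\|\Delta_s\le r$. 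So $\delta_0$ lies in the interior of $S_r$, and the boundary issue you worried about never occurs.
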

\begin{proof}
    Recall that for any $\bm{x}\in X$, $\bm{R_x}$ is a $C^2$ map. Then by the definition of $F_{\bm{x}}$ in \eqref{eqn:Fx}, $F_{\bm{x}}$ is a $\mathcal{C}^2$ map. Let
    \begin{equation}\label{eqn:epsilon}
    \epsilon:=\min\left\{\sigma,\frac{1}{2K_L\norm{B}^2K_PK_2}\right\}.
    \end{equation}
    
    \noindent For any $(\bm{v},\delta)\in V_{\bm{x}}^B(\Delta_s)\times S_{r}$, by \eqref{eqn:Px bounded above X}, \eqref{eqn:def of r}, and \eqref{eqn:def of barDelta 1}, we have
    \begin{equation}\label{eqn:v-PxB*delta norm upper}
    \norm{\bm{v}-\bm{P_x}B^*\delta}_{\mathcal{E}}\le\norm{\bm{v}}_{\mathcal{E}}+\norm{\bm{P_x}}\norm{B}\norm{\delta}_{\mathcal{Y}}\le \Delta_s+K_P\norm{B}r\le\epsilon.
    \end{equation}
    By \eqref{eqn:partial Fx delta} and \eqref{eqn:partial F =-Lx}, it holds that
    \begin{equation}\label{eqn:difference partial Fx}
    \norm{ \frac{\partial F_{\bm{x}}}{\partial\delta}(\bm{v},\delta) - \frac{\partial F_{\bm{x}}}{\partial\delta}(\bm{0}_{\mathcal E},\bm{0}_{\mathcal Y}) }= \norm{ B\circ\left( D\bm{R}_{\bm{x}}\left(\bm{v}-\bm{P}_{\bm{x}}B^*\delta\right) - D\bm{R}_{\bm{x}}(\bm{0}_{\mathcal E}) \right) \circ\bm{P}_{\bm{x}}B^*}.
    \end{equation}
    Moreover, it follows from \eqref{eqn:DR D2R bounded above X}, \eqref{eqn:v-PxB*delta norm upper}, and the Mean Value Theorem that
    \begin{equation}\label{eqn:estimate difference DR2}
            \norm{ D\bm{R}_{\bm{x}}\left(\bm{v}-\bm{P}_{\bm{x}}B^*\delta\right) - D\bm{R}_{\bm{x}}(\bm{0}_{\mathcal E}) } \le \sup_{\theta\in[0,1]} \norm{ D^2\bm{R}_{\bm{x}}\left( \theta\left(\bm{v}-\bm{P}_{\bm{x}}B^*\delta\right) \right) } \norm{\bm{v}-\bm{P}_{\bm{x}}B^*\delta}_{\mathcal E} 
            \le K_2\epsilon.
    \end{equation}
    Consequently, by \eqref{eqn:partial F =-Lx}, \eqref{eqn:Px bounded above X}, \eqref{eqn:Lx-1 bounded above X}, \eqref{eqn:difference partial Fx} and \eqref{eqn:estimate difference DR2}, we have
    \begin{equation}\label{eqn:bound of difference of partial F} 
            \norm{ \frac{\partial F_{\bm{x}}}{\partial\delta}(\bm{v},\delta) - \frac{\partial F_{\bm{x}}}{\partial\delta}(\bm{0}_{\mathcal E},\bm{0}_{\mathcal Y}) }\le \norm{B}^2K_PK_2\epsilon \le \frac{1}{2K_L} \le \frac{1}{2\norm{L_{\bm{x}}^{-1}}} = \frac{1}{ 2\norm{ \left( \frac{\partial F_{\bm{x}}}{\partial\delta}(\bm{0}_{\mathcal E},\bm{0}_{\mathcal Y}) \right)^{-1} } }.
    \end{equation}
    Since $\frac{\partial F_{\bm{x}}}{\partial \delta}(\bm{0}_{\mathcal{E}},0_{\mathcal{Y}})=-L_{\bm{x}}$ is invertible, by \eqref{eqn:bound of difference of partial F} and Theorem 3.6-3 in \cite{ciarlet2013linear}, $\frac{\partial F_{\bm{x}}}{\partial \delta}(\bm{v},\delta)$ is invertible for all $(\bm{v},\delta)\in V_{\bm{x}}^B(\Delta_s)\times S_{r}$. Moreover, using \eqref{eqn:Lx-1 bounded above X}, \eqref{eqn:bound of difference of partial F}, and Theorem 3.6-3 in \cite{ciarlet2013linear} again, we obtain
    \begin{multline*}
    \norm{\left(\frac{\partial F_{\bm{x}}}{\partial \delta}(\bm{v},\delta)\right)^{-1}}\le \frac{\norm{\left(\frac{\partial F_{\bm{x}}}{\partial \delta}(\bm{0}_{\mathcal{E}},0_{\mathcal{Y}})\right)^{-1}}}{1-\norm{\left(\frac{\partial F_{\bm{x}}}{\partial \delta}(\bm{0}_{\mathcal{E}},0_{\mathcal{Y}})\right)^{-1}}\norm{\left(\frac{\partial F_{\bm{x}}}{\partial \delta}(\bm{0}_{\mathcal{E}},0_{\mathcal{Y}})-\frac{\partial F_{\bm{x}}}{\partial \delta}(\bm{v},\delta)\right)}}\\
    \le 2\norm{\left(\frac{\partial F_{\bm{x}}}{\partial \delta}(\bm{0}_{\mathcal{E}},0_{\mathcal{Y}})\right)^{-1}}=2\norm{L_{\bm{x}}^{-1}}\le 2K_L.
    \end{multline*}

    \noindent Observing that the partial derivative $\frac{\partial F_{\bm{x}}}{\partial \delta}(\bm{v},\delta)$ is invertible and $\left(\frac{\partial F_{\bm{x}}}{\partial \delta}(\bm{v},\delta)\right)^{-1}$ is bounded linear for all $(\bm{v},\delta)\in V_{\bm{x}}^B(\Delta_s)\times S_{r}$, we follow steps (iv)--(vii) of the proof of Theorem 7.13-1 in \cite{ciarlet2013linear} to deduce that $\delta_{\bm{x}}$ is a $\mathcal{C}^2$ mapping on $V_{\bm{x}}^B(\Delta_s)$. 
\end{proof}
\begin{remark}
    To facilitate the subsequent analysis, we summarize the key properties established in Theorem \ref{thm:uniform radius for C2}. 

   \noindent Let $\bm{x}\in X$. For any $\bm{v}\in V_{\bm{x}}^B(\Delta_s)$, we have
    \begin{equation}\label{eqn:v-PxB*deltax v upper bound}
        \norm{\bm{v}-\bm{P_x}B^*\delta_{\bm{x}}(\bm{v})}_{\mathcal{E}}\le\epsilon,
    \end{equation}
    and the partial derivative $\frac{\partial F_{\bm{x}}}{\partial \delta}(\bm{v},\delta_{\bm{x}}(\bm{v}))$ is invertible with
    \begin{equation}\label{eqn:partialx v deltaxv inverse and upper bound}
        \norm{\left(\frac{\partial F_{\bm{x}}}{\partial \delta}(\bm{v},\delta_{\bm{x}}(\bm{v}))\right)^{-1}}\le 2K_L.
    \end{equation}
\end{remark}

\noindent By Theorems~\ref{thm:uniform radius for solution} and \ref{thm:uniform radius for C2}, the local radius \(r_{\bm{x}}\) appearing in Lemma~\ref{lem:construction of retraction} and Theorem~\ref{thm:retraction second-order derivative} can be replaced, on the bounded set \(X\), by a uniform radius \(\Delta_s\) independent of \(\bm{x}\). Consequently, for each \(\bm{x}\in X\), there exists a \(\mathcal{C}^2\) mapping \(\delta_{\bm{x}}: V_{\bm{x}}^B(\Delta_s)\to \mathcal{Y}\) such that, for any \(\bm{v}\in \mathrm{T}_{\bm{x}}\mathcal{M}^B\) with \(\|\bm{v}\|_{\mathcal{E}}<\Delta_s\),
\begin{equation}\label{eqn:existence of implicit function}
B \bm{R}_{\bm{x}}\Bigl(\bm{v}-\bm{P}_{\bm{x}} B^* \delta_{\bm{x}}(\bm{v})\Bigr)=c.
\end{equation}
This uniform radius will be used in the next subsection to derive uniform estimates for the retraction \(\bm{R}_{\bm{x}}^B\) on \(X\). Moreover, since \(\bm{R}_{\bm{x}}\) and \(\delta_{\bm{x}}\) are \(\mathcal{C}^2\), the mapping
\[
\bm{R}_{\bm{x}}^B(\bm{v}) = \bm{R}_{\bm{x}}\!\left( \bm{v}-\bm{P}_{\bm{x}}B^*\delta_{\bm{x}}(\bm{v}) \right), \qquad \bm{v}\in V_{\bm{x}}^B(\Delta_s),
\]
defines a \(\mathcal{C}^2\) local retraction on \(\mathcal{M}^B\) at every \(\bm{x}\in X\).

\subsection{Uniform Boundedness of the Directional Derivatives of \texorpdfstring{$\bm{R_x}^B$}{}}

\noindent We next establish a uniform bound for the second-order directional derivatives of the retraction \(\bm{R}_{\bm{x}}^B\) on \(X\). This estimate will be used in the subsequent regularity analysis of the lifted objective functional.

\begin{theorem}\label{thm:uniform boundedness of directional second-order retraction}
	There exist constants $\beta_D>0$ and $0<\Delta_D< \Delta_s$ such that for any $\bm{x}\in X$ and any $\bm{v}\in \mathrm{T}_{\bm{x}}\mathcal{M}^B$ satisfying
    $\|\bm{v}\|_{\bm{x}}=1$, we have
    \[
    \left\|\frac{\mathrm{d}^2}{\mathrm{d}t^2}\,\bm{R}^B_{\bm{x}}(t\bm{v})\right\|_{\mathcal{E}}
    \le \beta_D,
    \qquad \forall\, t\in[0,\Delta_D].
    \]
\end{theorem}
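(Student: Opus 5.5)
We will differentiate the curve $\bm{c}^B(t)=\bm{R}_{\bm{x}}(\bm{w}(t))$ twice by the chain rule, where $\bm{w}(t):=t\bm{v}-\bm{P}_{\bm{x}}B^*\delta_{\bm{x}}(t\bm{v})$. Then we bound each piece uniformly in $\bm{x}\in X$ using the constants $K_P,K_1,K_2,K_L$ and the radii $\Delta_s,\epsilon$ from Theorems~\ref{thm:uniform radius for solution} and \ref{thm:uniform radius for C2}. For $t\in[0,\Delta_D]$ with $\Delta_D<\Delta_s$ fixed (e.g.\ $\Delta_D=\Delta_s/2$), we have $t\bm{v}\in V_{\bm{x}}^B(\Delta_s)$. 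Hence \eqref{eqn:v-PxB*deltax v upper bound} gives $\|\bm{w}(t)\|_{\mathcal{E}}\le\epsilon\le\sigma$, so \eqref{eqn:DR D2R bounded above X} applies along the whole curve. The chain rule yields
\[
\ddot{\bm{c}}^B(t)=D^2\bm{R}_{\bm{x}}(\bm{w}(t))[\dot{\bm{w}}(t),\dot{\bm{w}}(t)]+D\bm{R}_{\bm{x}}(\bm{w}(t))[\ddot{\bm{w}}(t)],
\]
with $\dot{\bm{w}}=\bm{v}-\bm{P}_{\bm{x}}B^*\dot\delta$ and $\ddot{\bm{w}}=-\bm{P}_{\bm{x}}B^*\ddot\delta$, where $\delta(t):=\delta_{\bm{x}}(t\bm{v})$. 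It follows that
\[
\|\ddot{\bm{c}}^B(t)\|_{\mathcal{E}}\le K_2\bigl(1+K_P\|B\|\,\|\dot\delta(t)\|_{\mathcal{Y}}\bigr)^2+K_1K_P\|B\|\,\|\ddot\delta(t)\|_{\mathcal{Y}}.
\]
It therefore suffices to bound $\dot\delta$ and $\ddot\delta$ uniformly.

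To bound $\dot\delta$ and $\ddot\delta$, we differentiate the identity $B\bm{R}_{\bm{x}}(\bm{w}(t))=c$ from \eqref{eqn:existence of implicit function}. Writing $A(t):=\frac{\partial F_{\bm{x}}}{\partial\delta}(t\bm{v},\delta(t))=-B\,D\bm{R}_{\bm{x}}(\bm{w}(t))\,\bm{P}_{\bm{x}}B^*$, the first derivative gives $B\,D\bm{R}_{\bm{x}}(\bm{w})[\bm{v}]+A\dot\delta=0$. By \eqref{eqn:partialx v deltaxv inverse and upper bound} this yields
\[
\|\dot\delta(t)\|_{\mathcal{Y}}\le 2K_L\|B\|K_1=:\beta_1.
\]
Differentiating once more gives
\[
B\,D^2\bm{R}_{\bm{x}}(\bm{w})[\dot{\bm{w}},\dot{\bm{w}}]+A\ddot\delta=0,
\]
since $B\,D\bm{R}_{\bm{x}}(\bm{w})[\ddot{\bm{w}}]=A\ddot\delta$ exactly. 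Hence
\[
\|\ddot\delta(t)\|_{\mathcal{Y}}\le 2K_L\|B\|K_2(1+K_P\|B\|\beta_1)^2.
\]
Substituting both bounds into the estimate for $\ddot{\bm{c}}^B$ gives an explicit $\beta_D$ depending only on $K_1,K_2,K_P,K_L,\|B\|$, and therefore only on $X$.

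The main obstacle is justifying that $\delta$ is twice differentiable along the curve with the derivatives given by the formal implicit differentiation, and that the operator $A(t)$ is invertible with the uniform bound $2K_L$ on the whole segment. Both facts come from Theorem~\ref{thm:uniform radius for C2}, which gives $\mathcal{C}^2$ regularity of $\delta_{\bm{x}}$ on $V_{\bm{x}}^B(\Delta_s)$, together with the remark following it. The one care point is that these hold at the actual solution pair $(t\bm{v},\delta_{\bm{x}}(t\bm{v}))\in V_{\bm{x}}^B(\Delta_s)\times S_r$, and this is guaranteed by Theorem~\ref{thm:uniform radius for solution}. Strict inequality $\Delta_D<\Delta_s$ ensures the closed interval $[0,\Delta_D]$ stays inside the open ball, so one-sided derivatives at $t=0$ pose no issue.
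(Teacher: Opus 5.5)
Your proposal is correct and follows the paper's proof essentially step for step. You differentiate $B\bm{R}_{\bm{x}}(\bm{w}(t))=c$ twice, invert $\partial F_{\bm{x}}/\partial\delta$ along the solution curve with the uniform bound $2K_L$ (valid because $\delta_{\bm{x}}(t\bm{v})\in S_r$ and $\delta_{\bm{x}}$ is $\mathcal{C}^2$ by Theorems~\ref{thm:uniform radius for solution} and~\ref{thm:uniform radius for C2}), and take $\Delta_D=\Delta_s/2$, which reproduces exactly the paper's constants $C_\eta$, $C_\xi$, $C_{\ddot\eta}$ and $\beta_D$.
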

\begin{proof}
	Fix an arbitrary $\bm{x}\in X$. Subsequently, choose an arbitrary $\bm{v}\in\mathrm{T}_{\bm{x}}\mathcal{M}^B$ with $\norm{\bm{v}}_{\bm{x}}=1$. Denote
	\begin{equation}\label{eqn:eta definition}
		\eta_{\bm{x},\bm{v}}(t):=\delta_{\bm{x}}(t\bm{v}),
	\end{equation}
	and
	\begin{equation}\label{eqn:xi definition}
		\xi_{\bm{x},\bm{v}}(t):=t\bm{v}-\bm{P_x}B^*\eta_{\bm{x},\bm{v}}(t),
	\end{equation}
	where $t\in [0,\Delta_s)$. To simplify the notation, we denote the first-order derivatives as 
    $$\detaxv(t) = \frac{\mathrm{d}}{\dt}\etaxv(t),\quad\dxixv(t) = \frac{\mathrm{d}}{\dt}\xixv(t).$$ 
    We apply the same convention to the second-order derivatives, denoted by $\ddot{\eta}_{\bm{x},\bm{v}}(t)$ and $\ddot{\xi}_{\bm{x},\bm{v}}(t)$. Substituting \eqref{eqn:eta definition} and \eqref{eqn:xi definition} into \eqref{eqn:existence of implicit function} and differentiating \eqref{eqn:existence of implicit function} with respect to $t$, we obtain
	\begin{equation}\label{eqn:first derivative of retraction equation origin}
	    B\circ D\bm{R_x}(\xi_{\bm{x},\bm{v}}(t))[\dxixv(t)]=0_{\mathcal{Y}},
	\end{equation}
	which implies
	\begin{equation}\label{eqn:first derivative of retraction equation}
		B\circ D\bm{R_x}(\xi_{\bm{x},\bm{v}}(t))[\bm{v}]=\mathcal{B}(t)[\detaxv(t)],
	\end{equation}
	where
    \begin{equation}\label{eqn:mathcal B definition}
        \mathcal{B}(t):=B\circ D\bm{R_x}(\xi_{\bm{x},\bm{v}}(t))\bm{P_x}B^*.
    \end{equation}
    By \eqref{eqn:delta0=0} and \eqref{eqn:Lx=BDR0PB}, we have
    \begin{equation}\label{eqn:mathcal B(0)}
        \mathcal{B}(0)=B\circ D\bm{R_x}(\xi_{\bm{x},\bm{v}}(0))\circ\bm{P_x}B^*=B\circ D\bm{R_x}(\bm{0}_{\mathcal{E}})\circ \bm{P_x}B^*=B\bm{P_x}B^*=L_{\bm{x}}.
    \end{equation}
    Differentiating \eqref{eqn:first derivative of retraction equation origin} with respect to $t$, we get
	\begin{equation}\label{eqn:second derivative of retraction equation}
		\mathcal{B}(t)\ddetaxv(t)=B\circ D^2\bm{R_x}(\xi_{\bm{x},\bm{v}}(t))[\dxixv(t),\dxixv(t)].
	\end{equation}
	Recall the constant \(\epsilon\) defined in \eqref{eqn:epsilon}.  Notice that $t\bm{v}\in V_{\bm{x}}^B(\Delta_s)$ for any $t\in[0,\Delta_s)$. Then, by \eqref{eqn:v-PxB*deltax v upper bound}, \eqref{eqn:eta definition}, \eqref{eqn:xi definition}, we have
    \begin{equation}\label{eqn:xi upper bound}
        \norm{\xi_{\bm{x},\bm{v}}(t)}_{\mathcal{E}}=\norm{t\bm{v}-\bm{P_x}B^*\eta_{\bm{x},\bm{v}}(t)}_{\mathcal{E}}=\norm{t\bm{v}-\bm{P_x}B^*\delta_{\bm{x}}(t\bm{v})}_{\mathcal{E}}\le \epsilon\leq \sigma.
    \end{equation}
    Meanwhile, it follows from  \eqref{eqn:partial Fx delta}, \eqref{eqn:v-PxB*deltax v upper bound}, \eqref{eqn:eta definition} and \eqref{eqn:xi definition} that
    \begin{equation*}
        \mathcal{B}(t)=B\circ D\bm{R_x}(\xi_{\bm{x},\bm{v}}(t))\circ \bm{P_x}B^*=B\circ D\bm{R_x}(t\bm{v}-\bm{P_x}B^*\delta_{\bm{x}}(t\bm{v}))\circ \bm{P_x}B^*=-\frac{\partial F_{\bm{x}}}{\partial \delta}(t\bm{v},\delta_{\bm{x}}(t\bm{v})).
    \end{equation*}
    Then, because of \eqref{eqn:partialx v deltaxv inverse and upper bound}, we know that $\mathcal{B}(t)$ is invertible and 
    \begin{equation}\label{eqn:Bt^-1 bound}
		\norm{\mathcal{B}(t)^{-1}}=\norm{\left(-\frac{\partial F_{\bm{x}}}{\partial \delta}(t\bm{v},\delta_{\bm{x}}(t\bm{v}))\right)^{-1}}\le 2K_L.
	\end{equation}
    Here, we take
    $$
    \Delta_D:=\frac{\Delta_s}{2}.
    $$ 
    Then, for any $t\in [0,\Delta_D]$, by \eqref{eqn:DR D2R bounded above X}, \eqref{eqn:first derivative of retraction equation}, \eqref{eqn:xi upper bound} and \eqref{eqn:Bt^-1 bound}, we have
	\begin{equation}\label{eqn:deta upper bound}
		\norm{\detaxv(t)}_{\mathcal{Y}}\le \norm{\mathcal{B}(t)^{-1}}\norm{B}\norm{D\bm{R_x}(\xixv(t))}\norm{\bm{v}}_{\bm{x}}\le 2K_L\norm{B}K_1:=C_{\eta}.
	\end{equation}
	Meanwhile, by differentiating \eqref{eqn:xi definition} and using \eqref{eqn:Px bounded above X}, \eqref{eqn:deta upper bound}, we have
	\begin{equation}\label{eqn:dxi upper bound}
		\norm{\dxixv(t)}_{\mathcal{E}}\le \norm{\bm{v}}_{    \mathcal{E}}+\norm{\bm{P_x}}\norm{B^*}\norm{\detaxv}_{\mathcal{Y}}\le 1+K_P\norm{B}C_{\eta}:=C_{\xi}.
	\end{equation}

\noindent Moreover, by \eqref{eqn:DR D2R bounded above X}, \eqref{eqn:second derivative of retraction equation}, \eqref{eqn:Bt^-1 bound}, and \eqref{eqn:dxi upper bound}, for any $t\in[0,\Delta_D]$ we obtain
	\begin{equation}\label{eqn:ddeta upper bound}
		\norm{\ddetaxv(t)}_{\mathcal{Y}}\le \norm{\mathcal{B}(t)^{-1}}\norm{B}\norm{D^2\bm{R_x}(\xixv(t))}\norm{\dxixv(t)}^2_{\mathcal{E}}\le 2K_L\norm{B}K_2(C_{\xi})^2:=C_{\ddot{\eta}}.
	\end{equation}
	By some direct calculations, we have
	\[
	\frac{\mathrm{d}^2}{\mathrm{d}t^2} \bm{R}^B_{\bm{x}}(t\bm{v})=-D\bm{R_x}(\xi_{\bm{x},\bm{v}}(t))\circ \bm{P_x}B^*\ddetaxv(t)+D^2\bm{R_x}(\xi_{\bm{x},\bm{v}}(t))[\dxixv(t),\dxixv(t)].
	\]
	Then, for any $t\in [0,\Delta_D]$, by \eqref{eqn:Px bounded above X}, \eqref{eqn:DR D2R bounded above X}, \eqref{eqn:xi upper bound}, \eqref{eqn:dxi upper bound} and \eqref{eqn:ddeta upper bound}, we have 
	\[
	\begin{aligned}
		\norm{\frac{\mathrm{d}^2}{\mathrm{d}t^2} \bm{R}^B_{\bm{x}}(t\bm{v})}_{\mathcal{E}}=&\norm{D\bm{R_x}(\xi_{\bm{x},\bm{v}}(t))[\ddot\xi_{\bm{x},\bm{v}}(t)]+D^2\bm{R_x}(\xi_{\bm{x},\bm{v}}(t))[\dxixv(t),\dxixv(t)]}_{\mathcal{E}}\\
        \le &\norm{D\bm{R_x}(\xi_{\bm{x},\bm{v}}(t))[\bm{P_x}B^*\ddetaxv(t)]}_{\mathcal{E}}+\norm{D^2\bm{R_x}(\xi_{\bm{x},\bm{v}}(t))[\dxixv(t),\dxixv(t)]}_{\mathcal{E}}\\
        \le&\norm{D\bm{R_x}(\xi_{\bm{x},\bm{v}}(t))}\norm{\bm{P_x}}\norm{B}\norm{\ddetaxv(t)}_{\mathcal{Y}}+\norm{D^2\bm{R_x}(\xi_{\bm{x},\bm{v}}(t))}\norm{\dxixv(t)}_{\mathcal{E}}^2\\
		\le &K_1K_P\norm{B}C_{\ddot{\eta}}+K_2(C_{\xi})^2:=\beta_D.
	\end{aligned}
	\]
	We see that $\Delta_D$ and $\beta_D$ depend only on the uniform constants $K_1$, $K_2$, $K_P$, $K_L$, $\sigma$, $\norm{B}$ and are independent of $\bm{x}$ or $\bm{v}$. 
\end{proof}

\subsection{Regularity Properties of \texorpdfstring{$J^B_{\bm{x}}$}{}}

\noindent The uniform estimates for the retraction obtained above can now be transferred to the lifted objective functional. In particular, we first show that \(J^B_{\bm{x}}\) is radially Lipschitz continuously differentiable on \(X\), in the sense of the following definition.

\begin{definition}[radially Lipschitz-$\mathcal{C}^1$ lifted objective functionals on $X$]
Let $X\subset \mathcal{M}^B$. We say that the lifted objective functionals
$J_{\bm{x}}^B$, $\bm{x}\in X$, are radially Lipschitz continuously differentiable on $X$ with uniform constants if there exist constants $\beta_{RL}>0$ and $\Delta_{RL}>0$ such that, for every $\bm{x}\in X$, every $\bm{\xi}\in \T_{\bm{x}}\mathcal{M}^B$ with $\norm{\bm{\xi}}_{\mathcal{E}}=1$, and every $t\in[0,\Delta_{RL}]$, it holds that
\begin{equation*}
\left|
\left.\frac{\mathrm{d}}{\mathrm{d}s} J_{\bm{x}}^B(s\bm{\xi})\right|_{s=t}
-
\left.\frac{\mathrm{d}}{\mathrm{d}s} J_{\bm{x}}^B(s\bm{\xi})\right|_{s=0}
\right|
\le \beta_{RL} t,
\end{equation*}
where $J_{\bm{x}}^B$ is defined in \eqref{eqn:JxB}.
\end{definition}

\begin{theorem}\label{thm:radially lipschitz}
	The radially Lipschitz-$\mathcal{C}^1$ regularity for ${J}^B_{\bm{x}}$ holds on $X$ with $\Delta_{RL} = \Delta_D$ and some $\beta_{RL}>0$.
\end{theorem}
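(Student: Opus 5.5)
The approach is to reduce the claim to a bound on the second derivative of the scalar function $g(s):=J^B_{\bm{x}}(s\bm{\xi})=J(\bm{c}^B(s))$, where $\bm{c}^B(s)=\bm{R}^B_{\bm{x}}(s\bm{\xi})$. We take $\|\bm{\xi}\|_{\mathcal E}=1$ and $s\in[0,\Delta_D]$. Since $\bm{R}^B_{\bm{x}}$ is $\mathcal{C}^2$ on $V^B_{\bm{x}}(\Delta_s)$ and $\Delta_D<\Delta_s$, the function $g$ is $\mathcal C^2$ on $[0,\Delta_D]$. Therefore
\[
|g'(t)-g'(0)|\le t\sup_{s\in[0,t]}|g''(s)|,
\]
and it suffices to bound $|g''(s)|$ uniformly in $\bm{x}\in X$, $\bm\xi$, and $s\in[0,\Delta_D]$. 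As in the proof of Theorem~\ref{thm: taylor}, see \eqref{eqn:second-order directional derivative of JxB},
\[
g''(s)=\bigl(\hess J(\bm{c}^B(s))[\dot{\bm c}^B(s)],\dot{\bm c}^B(s)\bigr)_{\mathcal E}+\bigl(\grad J(\bm{c}^B(s)),\ddot{\bm c}^B(s)\bigr)_{\mathcal E}.
\]

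The second factor $\ddot{\bm c}^B(s)$ is bounded by $\beta_D$ by Theorem~\ref{thm:uniform boundedness of directional second-order retraction}. For the velocity, I will reuse the quantities of that proof. We have $\dot{\bm c}^B(s)=D\bm{R_x}(\xi_{\bm x,\bm\xi}(s))[\dot\xi_{\bm x,\bm\xi}(s)]$, and by \eqref{eqn:xi upper bound}, \eqref{eqn:dxi upper bound}, and \eqref{eqn:DR D2R bounded above X} this gives $\|\dot{\bm c}^B(s)\|_{\mathcal E}\le K_1C_\xi$. Alternatively, one can integrate $\ddot{\bm c}^B$ from $\dot{\bm c}^B(0)=\bm\xi$ to get $\|\dot{\bm c}^B(s)\|\le 1+\beta_D\Delta_D$.

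It remains to bound $\grad J$ and $\hess J$ along the curve. The point is that the curve stays in a bounded subset of $\mathcal E$ independent of $\bm x$. Using $\bm c^B(0)=\bm x$ and the mean value theorem, we get $\|\bm c^B(s)\|_{\mathcal E}\le C_1+K_1C_\xi\Delta_D=:C_2$ by \eqref{eqn:boundedness of X}. (Alternatively, $\|\bm c^B(s)-\bm x\|\le K_1\|\xi_{\bm x,\bm\xi}(s)\|\le K_1\epsilon$ by \eqref{eqn:xi upper bound}.) Since $\grad J$ and $\hess J$ are bounded mappings by the standing assumptions of Section~\ref{subsec:model}, there are constants $G,H$ such that $\|\grad J(\bm y)\|\le G$ and $\|\hess J(\bm y)\|\le H$ for all $\|\bm y\|_{\mathcal E}\le C_2$. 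Hence $|g''(s)|\le H(K_1C_\xi)^2+G\beta_D$, and we set $\beta_{RL}$ equal to this constant.

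The main point requiring care is that every constant used ($C_\xi$, $\beta_D$, $C_2$, $G$, $H$) depends only on $X$ through $K_1$, $K_2$, $K_P$, $K_L$, $\sigma$, $\|B\|$, and $C_1$, and not on $\bm x$ or $\bm\xi$. This in turn requires that the trajectory $\bm c^B([0,\Delta_D])$ lies in a fixed bounded set, so that the boundedness of $\grad J$ and $\hess J$ can be applied. This is exactly what the uniform estimate on $\xi_{\bm x,\bm\xi}$ provides, so I expect no genuine obstacle beyond checking that the mean-value bound for $\bm R_{\bm x}$ is applied on the ball of radius $\sigma$ where \eqref{eqn:DR D2R bounded above X} holds.
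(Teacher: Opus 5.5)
Your proposal is correct and follows essentially the same route as the paper. Both arguments bound $g''$ via the chain-rule formula, control $\ddot{\bm c}^B$ by $\beta_D$, confine the curve to a fixed ball so that the boundedness of $\grad J$ and $\hess J$ applies, and then integrate. The only difference is that your primary velocity bound $K_1C_\xi$ comes directly from $\dot{\bm c}^B(s)=D\bm{R_x}(\xi_{\bm x,\bm\xi}(s))[\dot\xi_{\bm x,\bm\xi}(s)]$, whereas the paper uses your alternative $1+\Delta_D\beta_D$ obtained by integrating the acceleration bound; both are uniform in $\bm x$ and $\bm\xi$.
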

\begin{proof}
    Fix \( \bm{x}\in X \), and let \( \bm{c}^B(t)=\bm{R}_{\bm{x}}^B(t\bm{v}) \) with \( \bm{v}\in \mathrm{T}_{\bm{x}}\mathcal{M}^B \) and \( \|\bm{v}\|_{\bm{x}}=1 \) as in Theorem~\ref{thm:retraction second-order derivative}. For any \( t\in[0,\Delta_D] \), applying the Mean Value Theorem and Theorem~\ref{thm:uniform boundedness of directional second-order retraction} yields
    \[
    \left\|\left.\frac{\mathrm{d}}{\mathrm{d}\tau}\bm{c}^B(\tau)\right|_{\tau=t}
          -\left.\frac{\mathrm{d}}{\mathrm{d}\tau}\bm{c}^B(\tau)\right|_{\tau=0}\right\|_{\mathcal{E}}
    \le \sup_{\tau \in (0, t)} \left\|\frac{\mathrm{d}^2}{\mathrm{d}\tau^2}\bm{c}^B(\tau)\right\|_{\mathcal{E}} t 
    \le \beta_D t .
    \]
    By \eqref{eqn:retraction RxB value at 0 and first-order derivative},
    $\left.\frac{\mathrm{d}}{\mathrm{d}\tau}\bm{c}^B(\tau)\right|_{\tau=0}=\bm{v}$; hence, for all $t\in[0,\Delta_D]$,
    \[
    \left\|\left.\frac{\mathrm{d}}{\mathrm{d}\tau}\bm{c}^B(\tau)\right|_{\tau=t}\right\|_{\mathcal{E}}
    \le \|\bm{v}\|_{\mathcal{E}} + t\beta_D
    =\|\bm{v}\|_{\bm{x}} + t\beta_D
    \le 1+\Delta_D\beta_D
    =: \beta_1 .
    \]
    
 \noindent  Moreover, since \( \bm{c}^B(0)=\bm{x} \) by \eqref{eqn:retraction RxB value at 0 and first-order derivative}, applying the Mean Value Theorem again yields, for all \( t\in[0,\Delta_D] \), we have
    \[
    \|\bm{c}^B(t)-\bm{c}^B(0)\|_{\mathcal{E}}
    \le \sup_{\tau \in (0, t)} \left\|\frac{\mathrm{d}}{\mathrm{d}\tau}\bm{c}^B(\tau)\right\|_{\mathcal{E}} t
    \le \beta_1 t .
    \]
    Consequently, since $\bm{c}^B(0)=\bm{x}$, by \eqref{eqn:boundedness of X}, we have for all \( t\in[0,\Delta_D] \),
    \[
    \|\bm{c}^B(t)\|_{\mathcal{E}}
    \le \|\bm{x}\|_{\mathcal{E}} + t\beta_1
    \le C_1 + \Delta_D\beta_1
    =: \beta_2 .
    \]
    
\noindent  Next, by \eqref{eqn:second-order directional derivative of JxB}, we have
    \[
    \frac{\mathrm{d}^2}{\mathrm{d}\tau^2} J^B_{\bm{x}}(\tau\bm{v})
    =
    \left( \hess J(\bm{c}^B(\tau))\left[\frac{\mathrm{d}}{\mathrm{d}\tau}\bm{c}^B(\tau)\right],
           \frac{\mathrm{d}}{\mathrm{d}\tau}\bm{c}^B(\tau) \right)_{\mathcal{E}}
    +
    \left( \grad J(\bm{c}^B(\tau)), \frac{\mathrm{d}^2}{\mathrm{d}\tau^2}\bm{c}^B(\tau) \right)_{\mathcal{E}} .
    \]
    Since $\grad J$ and $\hess J$ are bounded mappings, there exists a constant
    $\beta_3>0$ such that, for all $\bm{y}\in\mathcal{M}^B$ with $\|\bm{y}\|_{\mathcal{E}}\le \beta_2$, it holds that
    \[
    \|\grad J(\bm{y})\|_{\mathcal{E}}\le \beta_3,
    \qquad
    \|\hess J(\bm{y})\|\le \beta_3 .
    \]
    Therefore, for any $\tau\in[0,\Delta_D]$, using the Cauchy--Schwarz inequality together with
    $\left\|\frac{\mathrm{d}}{\mathrm{d}\tau}\bm{c}^B(\tau)\right\|_{\mathcal{E}}\le \beta_1$ and
    $\left\|\frac{\mathrm{d}^2}{\mathrm{d}\tau^2}\bm{c}^B(\tau)\right\|_{\mathcal{E}}\le \beta_D$, we obtain
    \[
    \left|\frac{\mathrm{d}^2}{\mathrm{d}\tau^2} J^B_{\bm{x}}(\tau\bm{v})\right|
    \le \|\hess J(\bm{c}^B(\tau))\|\left\|\frac{\mathrm{d}}{\mathrm{d}\tau}\bm{c}^B(\tau)\right\|_{\mathcal{E}}^{2}
       + \|\grad J(\bm{c}^B(\tau))\|_{\mathcal{E}}\left\|\frac{\mathrm{d}^2}{\mathrm{d}\tau^2}\bm{c}^B(\tau)\right\|_{\mathcal{E}}
    \le \beta_3(\beta_1^2+\beta_D)
    =: \beta_{RL}.
    \]
    Consequently, for any $t\in[0,\Delta_D]$, we have
    \begin{align*}
    \left|
    \left.\frac{\mathrm{d}}{\mathrm{d}\tau}J^B_{\bm{x}}(\tau\bm{v})\right|_{\tau=t}
    -
    \left.\frac{\mathrm{d}}{\mathrm{d}\tau}J^B_{\bm{x}}(\tau\bm{v})\right|_{\tau=0}
    \right|
    &\le \int_{0}^{t}\left|\frac{\mathrm{d}^2}{\mathrm{d}\tau^2}J^B_{\bm{x}}(\tau\bm{v})\right|\,\mathrm{d}\tau
    \le \beta_{RL}\,t .
    \end{align*}
    This verifies the radially Lipschitz-$\mathcal{C}^1$ regularity with constants $\beta_{RL}$ and $\Delta_{RL}:=\Delta_D$.
\end{proof}

\noindent As a direct consequence of the radially Lipschitz-$\mathcal{C}^1$ regularity, the pullback objective satisfies the following quantitative estimate on \(X\).
\begin{proposition}\label{prop:LC1 second order property}
    Let $\bm{x}\in X$ and $\bm{v}\in \T_{\bm{x}}\mathcal{M}^B$ such that $\norm{\bm{v}}_{\bm{x}}\le \Delta_{RL}$. Then, we have
    \begin{equation}\label{eqn:LC1 second order property}
        J_{\bm{x}}^B(\bm{v})=J^B(\bm{R_x}^B(\bm{v}))\leq J^B(\bm{x})+\left(\grad J^B(\bm{x}),\bm{v}\right)_{\bm{x}}+\frac{\beta_{RL}}{2}\norm{\bm{v}}_{\bm{x}}^2.
    \end{equation}
\end{proposition}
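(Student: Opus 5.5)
The estimate follows by integrating the radial Lipschitz bound of Theorem~\ref{thm:radially lipschitz} along the ray $s\mapsto s\bm{\xi}$ in $\T_{\bm{x}}\mathcal{M}^B$. The case $\bm{v}=\bm{0}_{\mathcal{E}}$ is trivial, since $\bm{R}_{\bm{x}}^B(\bm{0}_{\mathcal{E}})=\bm{x}$. Otherwise, set $t:=\norm{\bm{v}}_{\bm{x}}\in(0,\Delta_{RL}]$ and $\bm{\xi}:=\bm{v}/t$, so that $\norm{\bm{\xi}}_{\mathcal{E}}=\norm{\bm{\xi}}_{\bm{x}}=1$. Define the scalar function $\phi(s):=J_{\bm{x}}^B(s\bm{\xi})$ for $s\in[0,t]$.

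This function is well defined and $\mathcal{C}^2$ there, because $\Delta_{RL}=\Delta_D<\Delta_s$. Hence $s\bm{\xi}\in V_{\bm{x}}^B(\Delta_s)$, and $\bm{R}_{\bm{x}}^B$ is $\mathcal{C}^2$ on this ball by Theorems~\ref{thm:uniform radius for solution} and~\ref{thm:uniform radius for C2}.

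By the fundamental theorem of calculus,
\[
\phi(t)-\phi(0)=\int_0^t \phi'(s)\,\mathrm{d}s = t\,\phi'(0)+\int_0^t\bigl(\phi'(s)-\phi'(0)\bigr)\,\mathrm{d}s .
\]
Theorem~\ref{thm:radially lipschitz} gives $|\phi'(s)-\phi'(0)|\le \beta_{RL}s$ for all $s\in[0,\Delta_{RL}]$. The remainder integral is therefore bounded by $\beta_{RL}t^2/2=\frac{\beta_{RL}}{2}\norm{\bm{v}}_{\bm{x}}^2$.

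Next we identify the linear term. By the chain rule and Theorem~\ref{thm:retraction second-order derivative}, $\phi'(0)=(\grad J(\bm{x}),\bm{\xi})_{\mathcal{E}}$. By \eqref{eqn:first-order derivative of JB and JxB} in Theorem~\ref{thm: taylor}, this equals $(\grad J^B(\bm{x}),\bm{\xi})_{\bm{x}}$. Consequently $t\phi'(0)=(\grad J^B(\bm{x}),\bm{v})_{\bm{x}}$.

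Finally, $\phi(0)=J^B(\bm{R}_{\bm{x}}^B(\bm{0}_{\mathcal{E}}))=J^B(\bm{x})$ and $\phi(t)=J_{\bm{x}}^B(\bm{v})$. Substituting these gives \eqref{eqn:LC1 second order property}.

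No step is a real obstacle. The only point needing care is the endpoint $t=\Delta_{RL}$: we must confirm that the whole segment $[0,t]\bm{\xi}$ stays in the domain where the radial estimate and the $\mathcal{C}^2$ regularity hold. This is guaranteed by $\Delta_D=\Delta_s/2<\Delta_s$.
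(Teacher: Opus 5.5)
Your proof is correct and follows essentially the same route as the paper. Both integrate the radial Lipschitz estimate of Theorem~\ref{thm:radially lipschitz} along the ray $s\mapsto s\bm{v}/\norm{\bm{v}}_{\bm{x}}$ to bound the first-order Taylor remainder by $\frac{\beta_{RL}}{2}\norm{\bm{v}}_{\bm{x}}^2$, then identify the linear term via \eqref{eqn:first-order derivative of JB and JxB} and use $J_{\bm{x}}^B(\bm{0}_{\mathcal{E}})=J^B(\bm{x})$; your explicit check that the segment stays in $V_{\bm{x}}^B(\Delta_s)$ (since $\Delta_{RL}=\Delta_D=\Delta_s/2$) is a detail the paper leaves implicit.
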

\begin{proof}
Applying Theorem \ref{thm:radially lipschitz}, we obtain
\begin{equation}\label{eqn:LC1 second order property part}
    \begin{aligned}
		\abs{{J}^B_{\bm{x}}(\bm{v})-{J}^B_{\bm{x}}(\bm{0}_{\mathcal{E}})-\left(\grad J^B_{\bm{x}}(\bm{0}_{\mathcal{E}}),\bm{v}\right)_{\bm{x}}}&=\abs{{J}^B_{\bm{x}}(\bm{v})-{J}^B_{\bm{x}}(\bm{0}_{\mathcal{E}})-\norm{\bm{v}}_{\bm{x}}\frac{\mathrm{d}}{\mathrm{d}\tau}{J}^B_{\bm{x}}\left(\tau\frac{\bm{v}}{\norm{\bm{v}}}_{\bm{x}}\right)\mid_{\tau=0}}\\
		&=\abs{\int_0^{\norm{\bm{v}}_{\bm{x}}}\left(\frac{\mathrm{d}}{\mathrm{d}\tau}{J}^B_{\bm{x}}\left(\tau\frac{\bm{v}}{\norm{\bm{v}}}_{\bm{x}}\right)-\frac{\mathrm{d}}{\mathrm{d}\tau}{J}^B_{\bm{x}}\left(\tau\frac{\bm{v}}{\norm{\bm{v}}}_{\bm{x}}\right)\mid_{\tau=0}\right)\,\mathrm{d}\tau}\\
		&\le \int_0^{\norm{\bm{v}}_{\bm{x}}}\beta_{RL}\tau\,\mathrm{d}\tau=\frac{1}{2}\beta_{RL}\norm{\bm{v}}^2_{\bm{x}}.
	\end{aligned}
\end{equation}
Recall from the definition of $J_{\bm{x}}^B$ in \eqref{eqn:JxB} that 
\begin{equation}\label{eqn:JxB0=Jx,JxBv=JBRxv}
    {J}^B_{\bm{x}}(\bm{0}_{\mathcal{E}})=J^B(\bm{x})\quad\text{and}\quad J_{\bm{x}}^B(\bm{v})=J^B(\bm{R}_{\bm{x}}^B(\bm{v})).
\end{equation}
Combining \eqref{eqn:first-order derivative of JB and JxB}, \eqref{eqn:LC1 second order property part}, and \eqref{eqn:JxB0=Jx,JxBv=JBRxv}, we obtain the desired inequality \eqref{eqn:LC1 second order property}.
\end{proof}

\noindent In addition to radial Lipschitz-$\mathcal{C}^1$ regularity, we also need uniform boundedness of first- and second-order quantities associated with the lifted objective functionals. The following theorem shows that the Riemannian gradient of \(J^B\) and the second derivative of the pullback \(J_{\bm{x}}^B\) at the origin are uniformly bounded on \(X\).

\begin{theorem}\label{thm:boundedness of DJ and D^2J}
	There exist $\beta_g>0,\beta_H>0$ such that, for any $\bm{x}\in X$, $\bm{v},\bm{u}\in\T_{\bm{x}}\mathcal{M}^B$, with $\norm{\bm{v}}_{\bm{x}}=1$ and $\norm{\bm{u}}_{\bm{x}}=1$, we have
	\[
	\norm{\grad J^B(\bm{x})}_{\mathcal{E}}\le \beta_g,
	\]
	and 
	\[
	\abs{\left(\hess J^B_{\bm{x}}(\bm{0}_{\mathcal{E}})[\bm{v}],\bm{u}\right)_{\mathcal{E}}}\le \beta_H.
	\]
\end{theorem}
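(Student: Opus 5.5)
The plan is to read both bounds off the explicit formulas in Theorem~\ref{thm: taylor}, using the uniform constants fixed on the bounded set $X$.

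For the gradient bound, \eqref{eqn:first-order derivative of JB and JxB} shows that $\grad J^B(\bm{x})\in\T_{\bm{x}}\mathcal{M}^B$ represents the functional $\bm{v}\mapsto(\grad J(\bm{x}),\bm{v})_{\mathcal{E}}$ on $\T_{\bm{x}}\mathcal{M}^B$ with respect to the ambient-induced metric. Choosing $\bm{v}=\grad J^B(\bm{x})$ and applying Cauchy--Schwarz gives
\[
\norm{\grad J^B(\bm{x})}_{\mathcal{E}}^2=(\grad J(\bm{x}),\grad J^B(\bm{x}))_{\mathcal{E}}\le\norm{\grad J(\bm{x})}_{\mathcal{E}}\norm{\grad J^B(\bm{x})}_{\mathcal{E}}.
\]
Hence $\norm{\grad J^B(\bm{x})}_{\mathcal{E}}\le\norm{\grad J(\bm{x})}_{\mathcal{E}}$; equivalently, $\grad J^B(\bm{x})$ is the orthogonal projection of $\grad J(\bm{x})$ onto the closed subspace. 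By \eqref{eqn:boundedness of X}, $X$ lies in the ball of radius $C_1$. Since $\grad J$ is a bounded mapping, $\sup_{\norm{\bm{y}}\le C_1}\norm{\grad J(\bm{y})}_{\mathcal{E}}=:\beta_g<\infty$.

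For the Hessian bound, I use \eqref{eqn:second-order direvative of JxB} and the definition of $\bm{\pi}_{\bm{x}}$, which give
\[
(\hess J^B_{\bm{x}}(\bm{0}_{\mathcal{E}})[\bm{v}],\bm{u})_{\mathcal{E}}=(\hess J(\bm{x})\bm{v},\bm{u})_{\mathcal{E}}+\bigl(\grad J(\bm{x}),\Lambda_{\bm{x}}(\ell_{\bm{x}}(\bm{u},\bm{v}))\bigr)_{\mathcal{E}}.
\]
Here $\bm{P}_{\bm{x}}\bm{v}=\bm{v}$ and $\bm{P}_{\bm{x}}\bm{u}=\bm{u}$ because $\bm{u},\bm{v}\in\T_{\bm{x}}\mathcal{M}$. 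The first term is bounded by $\sup_{\norm{\bm{y}}\le C_1}\norm{\hess J(\bm{y})}$, which is finite by the boundedness of $\hess J$. For the second term, \eqref{Lambda}, \eqref{eqn:Px bounded above X}, \eqref{eqn:Lx-1 bounded above X} and \eqref{norm of B*} give
\[
\norm{\Lambda_{\bm{x}}}\le 1+K_PK_L\norm{B}^2 .
\]
The standing assumption on $\ell$ from Section~\ref{sec:geometric-prelim} gives $\sup_{\bm{x}\in X}\norm{\ell_{\bm{x}}}=:K_\ell<\infty$, since $X$ is a bounded subset of $\mathcal{M}$. Combining these,
\[
\beta_H:=\sup_{\norm{\bm{y}}\le C_1}\norm{\hess J(\bm{y})}+\beta_g(1+K_PK_L\norm{B}^2)K_\ell .
\]

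The only step needing care is the second term. Its uniformity depends on controlling $\Lambda_{\bm{x}}$ through the uniform bound on $L_{\bm{x}}^{-1}$ from Assumption~\ref{asmp:Lx bounded below}, and on recognising that $\ell_{\bm{x}}$ is evaluated on tangent vectors, where $\bm{P}_{\bm{x}}$ acts as the identity. Given those two facts, the remaining steps are direct norm estimates. If one prefers to avoid the second-order identity at $\bm{0}_{\mathcal{E}}$, an alternative is to use polarization together with the bound on $\tfrac{d^2}{dt^2}J^B_{\bm{x}}(t\bm{v})$ at $t=0$ from the proof of Theorem~\ref{thm:radially lipschitz}.
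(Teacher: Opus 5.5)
Your proposal is correct and follows essentially the same route as the paper. The gradient bound comes from testing the identity $(\grad J^B(\bm{x}),\bm{v})_{\bm{x}}=(\grad J(\bm{x}),\bm{v})_{\mathcal{E}}$ against (a multiple of) $\grad J^B(\bm{x})$ itself, and the Hessian bound comes from the formula in Theorem~\ref{thm: taylor}(2) together with $\norm{\Lambda_{\bm{x}}}\le 1+K_PK_L\norm{B}^2$ and the uniform bounds on $\grad J$, $\hess J$ and $\ell$ over $X$, yielding the same constant $\beta_H$ as the paper.
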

\begin{proof}
	Since $\grad J$ is a bounded mapping and $X$ is bounded by \eqref{eqn:boundedness of X}, there exists a constant $\beta_g>0$ such that
    \begin{equation}\label{eqn:upper bound of DJ}
    \|\grad J(\bm{x})\|_{\mE} \le \beta_g,\qquad \forall\,\bm{x}\in X.
    \end{equation}
	Then, for any $\bm{x}\in X$ and any $\bm{v}\in \mathrm{T}_{\bm{x}}\mathcal{M}^B$ with $\norm{\bm{v}}_{\bm{x}}=1$, \eqref{eqn:first-order derivative of JB and JxB} in Theorem~\ref{thm: taylor} yields
    \begin{multline*}
            \bigl|(\grad J^B(\bm{x}),\bm{v})_{\mathcal{E}}\bigr|
    =\bigl|(\grad J^B(\bm{x}),\bm{v})_{\bm{x}}\bigr|
    =\bigl|(\grad J(\bm{x}),\bm{v})_{\mathcal{E}}\bigr| \\
    \le \|\grad J(\bm{x})\|_{\mathcal{E}}\,\|\bm{v}\|_{\mathcal{E}}
    =\|\grad J(\bm{x})\|_{\mathcal{E}}\,\|\bm{v}\|_{\bm{x}}
    \le \beta_g .
    \end{multline*}
    Taking $\bm{v}=\grad J^B(\bm{x})/\|\grad J^B(\bm{x})\|_{\mathcal{E}}$ whenever
    $\grad J^B(\bm{x})\neq 0$ (and noting that the claim is trivial otherwise) implies
    \begin{equation}
        \|\grad J^B(\bm{x})\|_{\mathcal{E}}\le \beta_g.
    \end{equation}
	Next, since $\hess J$ and $\ell$ are bounded mappings, there exist $C_2,C_3>0$ such that for any $\bm{x}\in X$, we have
    \begin{equation}\label{eqn:upper bound of D2J and ellx}
    \norm{\hess J(\bm{x})}\le C_2,\quad  \norm{\ell_{\bm{x}}}\le C_3.
    \end{equation}
	Then, for any $\bm{x}\in X$ and any $\bm{v},\bm{u}\in \mathrm{T}_{\bm{x}}\mathcal{M}^B$ with $\|\bm{v}\|_{\bm{x}}=1$ and $\|\bm{u}\|_{\bm{x}}=1$, it follows from
    \eqref{eqn:second-order direvative of JxB} in Theorem~\ref{thm: taylor},
    together with \eqref{Lambda}, \eqref{eqn:Px bounded above X}, \eqref{eqn:Lx-1 bounded above X}, \eqref{eqn:upper bound of DJ}, \eqref{eqn:upper bound of D2J and ellx}, and the Cauchy--Schwarz inequality, that
    \[
    \begin{aligned}
    \left|\bigl(\hess J^B_{\bm{x}}(\bm{0}_{\mathcal{E}})[\bm{v}],\bm{u}\bigr)_{\mathcal{E}}\right|
    &=
    \left|\Bigl(\bigl[\hess J(\bm{x})+\bm{\pi}_{\bm{x}}\bigr]\bm{v},\bm{u}\Bigr)_{\mathcal{E}}\right| \\
    &\le \|\hess J(\bm{x})\|\,\|\bm{v}\|_{\mathcal{E}}\,\|\bm{u}\|_{\mathcal{E}}
         + \|\grad J(\bm{x})\|_{\mathcal{E}}\,\|\Lambda_{\bm{x}}\|\,\|\ell_{\bm{x}}(\bm{v},\bm{u})\|_{\mathcal{E}} \\
    &\le \Bigl(\|\hess J(\bm{x})\| + \|\grad J(\bm{x})\|_{\mathcal{E}}\,\|\Lambda_{\bm{x}}\|\,\|\ell_{\bm{x}}\|\Bigr)
          \|\bm{v}\|_{\bm{x}}\,\|\bm{u}\|_{\bm{x}} \\
    &=\|\hess J(\bm{x})\| + \|\grad J(\bm{x})\|_{\mathcal{E}}\,\|\mathrm{Id}_{\mathcal{E}} - \bm{P_x}B^*L_{\bm{x}}^{-1}B\|\,\|\ell_{\bm{x}}\|\\
    &\le C_2 + \beta_g\Bigl(\|\mathrm{Id}_{\mathcal{E}}\| + \|\bm{P}_{\bm{x}}\|\,\|B^*\|\,\|L_{\bm{x}}^{-1}\|\,\|B\|\Bigr)C_3 \\
    &\le C_2 + \beta_g\bigl(1+K_LK_P\|B\|^2\bigr)C_3
    =: \beta_H .
    \end{aligned}
    \]
    This completes the proof.
\end{proof}

\section{Projection-Induced Riemannian Metric and Gradient}\label{sec:projection-induced gradient}


\noindent The projection \(\bm{P}_{\bm{x}}^B\) constructed in Proposition~\ref{prop:projection to TxMb} is an explicit map from the ambient Hilbert space \(\mathcal{E}\) onto \(\T_{\bm{x}}\mathcal{M}^B\), and therefore provides a natural way to construct tangent directions. However, since \(\bm{P}_{\bm{x}}^B\) is not assumed to be orthogonal with respect to the ambient inner product, directly projecting the ambient gradient does not, in general, yield the Riemannian gradient associated with the ambient-induced metric. Indeed, by Theorem~\ref{thm: taylor}, \(\grad J^B(\bm{x})\) is characterized by
\begin{equation}\label{eqn:equation for riemannian gradient}
    \left(\grad J^B(\bm{x}),\bm{v}\right)_{\bm{x}} = \left(\grad J(\bm{x}),\bm{v}\right)_{\mathcal{E}}, \qquad \forall\,\bm{v}\in\T_{\bm{x}}\mathcal{M}^B.
\end{equation}
Thus, computing \(\grad J^B(\bm{x})\) requires solving the tangent-space Riesz representation problem \eqref{eqn:equation for riemannian gradient}, which can be computationally expensive in practice. To resolve this, we introduce a projection-induced Riemannian metric \(\widetilde{g}_{\bm{x}}\) on \(\T_{\bm{x}}\mathcal{M}^B\), under which \(\bm{P}_{\bm{x}}^B(\bm{P}_{\bm{x}}^B)^*\grad J(\bm{x})\) is the exact Riemannian gradient. The induced norm of \(\widetilde{g}_{\bm{x}}\) is uniformly equivalent to the ambient norm on bounded subsets of \(\mathcal{M}^B\), providing the analytic basis for the subsequent convergence analysis.

\noindent Throughout this section, we use the same bounded set \(X\) as defined in \eqref{eqn:X} in Section~\ref{sec:properties}. We first record the boundedness of the projected operator \(\bm{P}_{\bm{x}}^B\): pointwise on \(\mathcal{M}^B\), and uniformly on \(X\).

\begin{lemma}\label{lem:PxB upper bound}
    For every \(\bm{x}\in\mathcal{M}^B\), the projection \(\bm{P}_{\bm{x}}^B\) defined in \eqref{eqn:PxB def} is a bounded linear operator on \(\mathcal{E}\); in particular, its adjoint \((\bm{P}_{\bm{x}}^B)^*\) is well defined. Moreover, there exists a constant \(K_B>0\), depending only on the bounded set \(X\), such that
    \begin{equation}\label{eqn:PxB upper bound}
        \|\bm{P}_{\bm{x}}^B\|\le K_B, \qquad \forall\,\bm{x}\in X.
    \end{equation}
\end{lemma}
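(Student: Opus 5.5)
The plan is to read boundedness off the explicit formula \eqref{eqn:PxB def}, \(\bm{P}_{\bm{x}}^B=\bm{P_x}-\bm{P_x}B^*L_{\bm{x}}^{-1}B\bm{P_x}\), by treating each factor separately. For the pointwise claim, fix \(\bm{x}\in\mathcal{M}^B\). The standing assumptions of Section~\ref{sec:geometric-prelim} make \(\bm{P_x}\) a bounded linear operator. \(B\) is bounded by hypothesis, and \(B^*\) is bounded with \(\|B^*\|=\|B\|\) by \eqref{norm of B*}. Under Assumption~\ref{asmp:Lx invertible}, \(L_{\bm{x}}\) is a bounded bijection of \(\mathcal{Y}\), so the Bounded Inverse Theorem (as used in the proof of Proposition~\ref{prop:submersion-induced-by-Lx}) shows that \(L_{\bm{x}}^{-1}\) is bounded. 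Hence \(\bm{P}_{\bm{x}}^B\) is a finite sum of compositions of bounded linear operators, and so lies in \(\mathcal{L}(\mathcal{E},\mathcal{E})\). Every bounded operator on a Hilbert space has a unique bounded adjoint with the same norm, which gives that \((\bm{P}_{\bm{x}}^B)^*\) is well defined.

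For the uniform bound, restrict to \(\bm{x}\in X\) and use the triangle inequality together with submultiplicativity:
\[
\|\bm{P}_{\bm{x}}^B\|\le \|\bm{P_x}\|+\|\bm{P_x}\|\,\|B^*\|\,\|L_{\bm{x}}^{-1}\|\,\|B\|\,\|\bm{P_x}\|.
\]
We then insert the uniform constants already fixed on \(X\): \(\|\bm{P_x}\|\le K_P\) from \eqref{eqn:Px bounded above X}, which follows from Assumption~\ref{asmp:boundedness of P}, and \(\|L_{\bm{x}}^{-1}\|\le K_L\) from \eqref{eqn:Lx-1 bounded above X}, which follows from the Lax--Milgram consequence \eqref{eqn:Lx-1 bounded above} of Assumption~\ref{asmp:Lx bounded below}. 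This yields
\[
K_B:=K_P+K_P^2K_L\|B\|^2,
\]
which depends only on \(X\) through \(K_P\) and \(K_L\).

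There is no real obstacle here. The only point that needs care is that the bound on \(L_{\bm{x}}^{-1}\) be uniform rather than pointwise. That uniformity is supplied by the coercivity constant \(C_S\) of Assumption~\ref{asmp:Lx bounded below} applied with \(S=X\), so we must invoke that assumption and not merely Assumption~\ref{asmp:Lx invertible}.
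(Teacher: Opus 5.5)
Your proposal is correct and follows the paper's proof essentially verbatim: pointwise boundedness from the bounded factors $\bm{P_x}$, $B$, $B^*$ and $L_{\bm{x}}^{-1}$ (the last via the Bounded Inverse Theorem), then the triangle inequality and submultiplicativity with the uniform constants $K_P$ and $K_L$ on $X$. This gives the same constant $K_B=K_P+K_P^2K_L\|B\|^2$. Your remark that uniformity of $\|L_{\bm{x}}^{-1}\|$ must come from Assumption~\ref{asmp:Lx bounded below} rather than Assumption~\ref{asmp:Lx invertible} is exactly what the paper relies on through \eqref{eqn:Lx-1 bounded above X}.
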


\begin{proof}
By the definition \eqref{eqn:PxB def}, $\bm{P}_{\bm{x}}^B=\bm{P}_{\bm{x}}-\bm{P}_{\bm{x}}B^*L_{\bm{x}}^{-1}B\bm{P}_{\bm{x}}$. For every \(\bm{x}\in\mathcal{M}^B\), the operators \(\bm{P}_{\bm{x}}\) and \(B\) are bounded, and Assumption~\ref{asmp:Lx invertible}, together with the Bounded Inverse Theorem, implies that \(L_{\bm{x}}^{-1}\) is bounded. Since \(\|B^*\|=\|B\|\), it follows that
\[
    \|\bm{P}_{\bm{x}}^B\| \le \|\bm{P}_{\bm{x}}\| + \|\bm{P}_{\bm{x}}\|^2\|B\|^2\|L_{\bm{x}}^{-1}\| < \infty,
\]
which proves the first assertion. For \(\bm{x}\in X\), the uniform bounds \eqref{eqn:Px bounded above X} and \eqref{eqn:Lx-1 bounded above X} give
\[
    \|\bm{P}_{\bm{x}}^B\| \le K_P+K_P^2K_L\|B\|^2,
\]
and thus \eqref{eqn:PxB upper bound} holds with $K_B:=K_P+K_P^2K_L\|B\|^2$.
\end{proof}

\noindent We next record a useful identity relating the ambient gradient, the ambient-induced Riemannian gradient, and the adjoint of the projection \(\bm{P}_{\bm{x}}^B\).

\begin{lemma}\label{lem:PxBstar gradient identity}
For any \(\bm{x}\in\mathcal{M}^B\), one has
\begin{equation}\label{eqn:PxBstar gradient identity}
    (\bm{P}_{\bm{x}}^B)^*\grad J(\bm{x})
    =
    (\bm{P}_{\bm{x}}^B)^*\grad J^B(\bm{x}).
\end{equation}
\end{lemma}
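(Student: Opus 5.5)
We will prove the identity by testing both sides against an arbitrary $\bm{w}\in\mathcal{E}$ and using the Riesz characterization \eqref{eqn:equation for riemannian gradient} of $\grad J^B(\bm{x})$. By Lemma~\ref{lem:PxB upper bound}, $\bm{P}_{\bm{x}}^B$ is a bounded linear operator on $\mathcal{E}$, so its Hilbert adjoint $(\bm{P}_{\bm{x}}^B)^*$ exists. It therefore suffices to show that
\[
\bigl((\bm{P}_{\bm{x}}^B)^*\grad J(\bm{x}),\bm{w}\bigr)_{\mathcal{E}}=\bigl((\bm{P}_{\bm{x}}^B)^*\grad J^B(\bm{x}),\bm{w}\bigr)_{\mathcal{E}},\qquad \forall\,\bm{w}\in\mathcal{E}.
\]

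The first step moves the adjoint across the inner product. This gives, for each $\bm{w}\in\mathcal{E}$,
\[
\bigl((\bm{P}_{\bm{x}}^B)^*\grad J(\bm{x}),\bm{w}\bigr)_{\mathcal{E}}=\bigl(\grad J(\bm{x}),\bm{P}_{\bm{x}}^B\bm{w}\bigr)_{\mathcal{E}}.
\]
By \eqref{eqn:PxBv in TxMB} in Proposition~\ref{prop:projection to TxMb}, the vector $\bm{v}:=\bm{P}_{\bm{x}}^B\bm{w}$ lies in $\T_{\bm{x}}\mathcal{M}^B$.

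The second step applies part (1) of Theorem~\ref{thm: taylor}, equivalently \eqref{eqn:equation for riemannian gradient}, with this $\bm{v}$. This yields
\[
\bigl(\grad J(\bm{x}),\bm{v}\bigr)_{\mathcal{E}}=\bigl(\grad J^B(\bm{x}),\bm{v}\bigr)_{\bm{x}}=\bigl(\grad J^B(\bm{x}),\bm{v}\bigr)_{\mathcal{E}}.
\]
The last equality holds because the metric \eqref{eqn:ambient riemannian metri} is the restriction of $(\cdot,\cdot)_{\mathcal{E}}$, and $\grad J^B(\bm{x})\in\T_{\bm{x}}\mathcal{M}^B\subset\mathcal{E}$.

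The third step moves the adjoint back, so that
\[
\bigl(\grad J^B(\bm{x}),\bm{P}_{\bm{x}}^B\bm{w}\bigr)_{\mathcal{E}}=\bigl((\bm{P}_{\bm{x}}^B)^*\grad J^B(\bm{x}),\bm{w}\bigr)_{\mathcal{E}}.
\]
Since $\bm{w}$ was arbitrary, the two vectors coincide, which proves \eqref{eqn:PxBstar gradient identity}.

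There is no real obstacle in this argument. The only point needing care is to justify that $\grad J^B(\bm{x})$ may be regarded as an element of $\mathcal{E}$, so that $(\bm{P}_{\bm{x}}^B)^*$ can be applied to it, and that the tangent metric agrees with the ambient inner product. Both facts follow from the embedding $\T_{\bm{x}}\mathcal{M}^B\subset\mathcal{E}$ given by Corollary~\ref{cor:MB manifold} and from \eqref{eqn:ambient riemannian metri}.
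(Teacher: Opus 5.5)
Your proof is correct and matches the paper's argument essentially verbatim. Like the paper, you test against an arbitrary vector in $\mathcal{E}$, move the adjoint across, use $\bm{P}_{\bm{x}}^B\bm{w}\in\T_{\bm{x}}\mathcal{M}^B$ together with the characterization \eqref{eqn:equation for riemannian gradient}, and move the adjoint back.
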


\begin{proof}
For any \(\bm{u}\in\mathcal{E}\), Proposition~\ref{prop:projection to TxMb} gives $\bm{P}_{\bm{x}}^B\bm{u}\in\T_{\bm{x}}\mathcal{M}^B$. Hence, by \eqref{eqn:equation for riemannian gradient}, we have
\[
\left((\bm{P}_{\bm{x}}^B)^*\grad J(\bm{x}),\bm{u}\right)_{\mathcal{E}}
=
\left(\grad J(\bm{x}),\bm{P}_{\bm{x}}^B\bm{u}\right)_{\mathcal{E}}
=
\left(\grad J^B(\bm{x}),\bm{P}_{\bm{x}}^B\bm{u}\right)_{\mathcal{E}}
=
\left((\bm{P}_{\bm{x}}^B)^*\grad J^B(\bm{x}),\bm{u}\right)_{\mathcal{E}}.
\]
Since \(\bm{u}\in\mathcal{E}\) is arbitrary, \eqref{eqn:PxBstar gradient identity} follows immediately.
\end{proof}

\begin{proposition}[Projection-Induced Riemannian Metric and Exact Gradient]\label{prop:projection induced metric gradient}
For $\bm{x}\in \mathcal{M}^B$, let $\T_{\bm{x}}:=\T_{\bm{x}}\mathcal{M}^B$ and define the operator
\begin{equation}\label{eqn:Ax def}
    A_{\bm{x}} := \bm{P}_{\bm{x}}^B(\bm{P}_{\bm{x}}^B)^*\big|_{\T_{\bm{x}}} : \T_{\bm{x}}\to \T_{\bm{x}} .
\end{equation}
Then, \(A_{\bm{x}}\) is self-adjoint, positive definite, and boundedly invertible on \(\T_{\bm{x}}\). Let
\begin{equation}\label{eqn:Kx def}
    K_{\bm{x}}:=A_{\bm{x}}^{-1}.
\end{equation}
The bilinear form
\begin{equation}\label{eqn:projection induced metric}
    \widetilde{g}_{\bm{x}}(\bm{u},\bm{v}) := (K_{\bm{x}}\bm{u},\bm{v})_{\mathcal{E}}, \qquad \forall\,\bm{u},\bm{v}\in \T_{\bm{x}},
\end{equation}
defines a Hilbert inner product on \(\T_{\bm{x}}\). We denote the induced norm by
\begin{equation}\label{eqn:projection induced norm}
    \|\bm{v}\|_{\widetilde{g}_{\bm{x}}} := \sqrt{\widetilde{g}_{\bm{x}}(\bm{v},\bm{v})} = \sqrt{(K_{\bm{x}}\bm{v},\bm{v})_{\mathcal{E}}}, \qquad \forall\,\bm{v}\in \T_{\bm{x}}.
\end{equation}
Under Assumption~\ref{asmp:boundedness of P}, the section \(\bm{x}\mapsto\widetilde g_{\bm{x}}\) is continuous, so that \(\widetilde g:=\{\widetilde g_{\bm{x}}\}_{\bm{x}\in\mathcal{M}^B}\) is a continuous strong Riemannian metric on the Hilbert manifold \(\mathcal{M}^B\) \cite[Ch.~VII]{lang2012fundamentals}; we refer to it as the projection-induced Riemannian metric. Moreover, the vector
\begin{equation}\label{eqn:projection induced gradient def}
    \bm{G}_{\bm{x}} := \bm{P}_{\bm{x}}^B(\bm{P}_{\bm{x}}^B)^*\grad J(\bm{x}) \in \T_{\bm{x}}\mathcal{M}^B
\end{equation}
is the exact Riemannian gradient of \(J^B\) with respect to \(\widetilde{g}\), namely
\begin{equation}\label{eqn:projection induced exact gradient}
    \widetilde{g}_{\bm{x}}(\bm{G}_{\bm{x}},\bm{v}) = DJ^B(\bm{x})[\bm{v}], \qquad \forall\,\bm{v}\in \T_{\bm{x}}.
\end{equation}
Finally, on \(X\), the norm induced by the projection-induced metric is uniformly equivalent to the ambient norm:
\begin{equation}\label{eqn:metric equivalence compact}
    \frac{1}{K_B}\|\bm{v}\|_{\mathcal{E}} \le \|\bm{v}\|_{\widetilde{g}_{\bm{x}}} \le \|\bm{v}\|_{\mathcal{E}}, \qquad \forall\,\bm{x}\in X,\ \bm{v}\in \T_{\bm{x}}.
\end{equation}
\end{proposition}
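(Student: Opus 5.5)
The plan is to analyze $A_{\bm{x}}$ directly through the ambient inner product, using only two facts: $\bm{P}_{\bm{x}}^B$ acts as the identity on $\T_{\bm{x}}$ (Proposition~\ref{prop:projection to TxMb}), and $\bm{P}_{\bm{x}}^B$ is bounded, uniformly on $X$ (Lemma~\ref{lem:PxB upper bound}).

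\textbf{Properties of $A_{\bm{x}}$.} For $\bm{u},\bm{v}\in\T_{\bm{x}}$ we have
\[
(A_{\bm{x}}\bm{u},\bm{v})_{\mathcal{E}}=\bigl((\bm{P}_{\bm{x}}^B)^*\bm{u},(\bm{P}_{\bm{x}}^B)^*\bm{v}\bigr)_{\mathcal{E}},
\]
so $A_{\bm{x}}$ is self-adjoint on $\T_{\bm{x}}$. Coercivity follows from the identity property: since $\bm{P}_{\bm{x}}^B\bm{v}=\bm{v}$,
\[
\|\bm{v}\|_{\mathcal{E}}^2=(\bm{P}_{\bm{x}}^B\bm{v},\bm{v})_{\mathcal{E}}=(\bm{v},(\bm{P}_{\bm{x}}^B)^*\bm{v})_{\mathcal{E}}\le\|\bm{v}\|_{\mathcal{E}}\,\|(\bm{P}_{\bm{x}}^B)^*\bm{v}\|_{\mathcal{E}}.
\]
Hence $\|(\bm{P}_{\bm{x}}^B)^*\bm{v}\|_{\mathcal{E}}\ge\|\bm{v}\|_{\mathcal{E}}$, that is, $(A_{\bm{x}}\bm{v},\bm{v})_{\mathcal{E}}\ge\|\bm{v}\|_{\mathcal{E}}^2$. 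Since $\T_{\bm{x}}$ is a closed subspace, hence a Hilbert space, Lax--Milgram makes $A_{\bm{x}}$ boundedly invertible. The same inequality gives $\|A_{\bm{x}}\bm{w}\|\,\|\bm{w}\|\ge\|\bm{w}\|^2$, so $\|K_{\bm{x}}\|\le 1$. Moreover $K_{\bm{x}}$ is self-adjoint and positive. Indeed, writing $\bm{w}=K_{\bm{x}}\bm{v}$,
\[
(K_{\bm{x}}\bm{v},\bm{v})_{\mathcal{E}}=(\bm{w},A_{\bm{x}}\bm{w})_{\mathcal{E}}\ge\|\bm{w}\|_{\mathcal{E}}^2>0\quad\text{for }\bm{v}\neq 0.
\]
Therefore $\widetilde g_{\bm{x}}$ is symmetric and positive definite. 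Completeness of $\T_{\bm{x}}$ under $\widetilde g_{\bm{x}}$ will follow from the norm equivalence proved below.

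\textbf{Exact gradient.} Since $\grad J(\bm{x})\notin\T_{\bm{x}}$ in general, I first use Lemma~\ref{lem:PxBstar gradient identity} to replace $(\bm{P}_{\bm{x}}^B)^*\grad J(\bm{x})$ by $(\bm{P}_{\bm{x}}^B)^*\grad J^B(\bm{x})$. Because $\grad J^B(\bm{x})\in\T_{\bm{x}}$, this gives $\bm{G}_{\bm{x}}=A_{\bm{x}}\grad J^B(\bm{x})$, and hence $K_{\bm{x}}\bm{G}_{\bm{x}}=\grad J^B(\bm{x})$. Then
\[
\widetilde g_{\bm{x}}(\bm{G}_{\bm{x}},\bm{v})=(\grad J^B(\bm{x}),\bm{v})_{\mathcal{E}}=DJ^B(\bm{x})[\bm{v}],
\]
where the last equality is \eqref{eqn:equation for riemannian gradient} together with Theorem~\ref{thm: taylor}.

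\textbf{Norm equivalence on $X$.} The upper bound is immediate: $(K_{\bm{x}}\bm{v},\bm{v})_{\mathcal{E}}\le\|K_{\bm{x}}\|\,\|\bm{v}\|_{\mathcal{E}}^2\le\|\bm{v}\|_{\mathcal{E}}^2$. For the lower bound I use the square roots of the positive self-adjoint operators $A_{\bm{x}}$ and $K_{\bm{x}}=A_{\bm{x}}^{-1}$ on $\T_{\bm{x}}$:
\[
\|\bm{v}\|_{\mathcal{E}}^2=(A_{\bm{x}}^{1/2}K_{\bm{x}}^{1/2}\bm{v},\bm{v})_{\mathcal{E}}\le\|A_{\bm{x}}^{1/2}\|\,\|\bm{v}\|_{\widetilde g_{\bm{x}}}\|\bm{v}\|_{\mathcal{E}}.
\]
Since $\|A_{\bm{x}}^{1/2}\|^2=\|A_{\bm{x}}\|\le\|\bm{P}_{\bm{x}}^B\|^2\le K_B^2$ on $X$, this yields $\|\bm{v}\|_{\mathcal{E}}\le K_B\|\bm{v}\|_{\widetilde g_{\bm{x}}}$.

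\textbf{Continuity of $\bm{x}\mapsto\widetilde g_{\bm{x}}$ (main obstacle).} The tangent spaces vary with $\bm{x}$, so continuity must be read in a local trivialization. First, $\bm{x}\mapsto\bm{P}_{\bm{x}}$ is continuous by Assumption~\ref{asmp:boundedness of P}. Then $L_{\bm{x}}$ is continuous, and so is $L_{\bm{x}}^{-1}$, because inversion is continuous on invertible operators. Consequently $\bm{x}\mapsto\bm{P}_{\bm{x}}^B$ and $\bm{x}\mapsto\bm{P}_{\bm{x}}^B(\bm{P}_{\bm{x}}^B)^*$ are norm-continuous. Near a fixed $\bm{x}_0$, I would trivialize by $\bm{P}_{\bm{y}}^B|_{\T_{\bm{x}_0}}:\T_{\bm{x}_0}\to\T_{\bm{y}}$. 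This map is an isomorphism for $\bm{y}$ close to $\bm{x}_0$, by a Neumann-series argument, since $\bm{P}_{\bm{x}_0}^B\bm{P}_{\bm{y}}^B|_{\T_{\bm{x}_0}}$ is close to the identity. The pulled-back forms
\[
(\bm{u},\bm{v})\mapsto\widetilde g_{\bm{y}}(\bm{P}_{\bm{y}}^B\bm{u},\bm{P}_{\bm{y}}^B\bm{v})
\]
then depend continuously on $\bm{y}$, because $K_{\bm{y}}$ is the inverse of the continuously varying coercive operator $A_{\bm{y}}$ in these coordinates. This is the step needing the most care, but it uses only continuity of inversion.
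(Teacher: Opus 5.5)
Your treatment of $A_{\bm{x}}$ and $K_{\bm{x}}$, the exact-gradient identity, and the norm equivalence is essentially the paper's argument. Your square-root estimate is a hands-on version of the paper's inversion of the form bounds $\mathrm{Id}\le A_{\bm{x}}\le\|\bm{P}_{\bm{x}}^B\|^2\,\mathrm{Id}$. It also holds pointwise with constant $\|\bm{P}_{\bm{x}}^B\|$, which is what completeness away from $X$ requires.

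The genuine difference is the continuity step. The paper never trivializes the bundle. It extends $A_{\bm{x}}$ to the operator $\widehat A_{\bm{x}}:=\bm{P}_{\bm{x}}^B(\bm{P}_{\bm{x}}^B)^*\bm{P}_{\bm{x}}^B+I-\bm{P}_{\bm{x}}^B$ on the fixed space $\mathcal{E}$. Its inverse is explicitly $\widehat K_{\bm{x}}=K_{\bm{x}}\bm{P}_{\bm{x}}^B+(I-\bm{P}_{\bm{x}}^B)$. The resolvent identity then gives operator-norm continuity of $\bm{x}\mapsto\widehat K_{\bm{x}}$, and $\widetilde g_{\bm{x}}(\bm{u},\bm{v})=(\widehat K_{\bm{x}}\bm{u},\bm{v})_{\mathcal{E}}$ finishes the argument.

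Your trivialization $\Phi_{\bm{y}}:=\bm{P}_{\bm{y}}^B|_{\T_{\bm{x}_0}}$ checks continuity in the form the bundle definition literally asks for. You also justify norm-continuity of $\bm{x}\mapsto\bm{P}_{\bm{x}}^B$ via continuity of inversion for $L_{\bm{x}}$, which the paper only asserts. Two points still need filling in.

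\emph{Surjectivity of $\Phi_{\bm{y}}$.} That $M_{\bm{y}}:=\bm{P}_{\bm{x}_0}^B\bm{P}_{\bm{y}}^B|_{\T_{\bm{x}_0}}$ is close to the identity only makes $\Phi_{\bm{y}}$ injective with a bounded left inverse. Surjectivity onto $\T_{\bm{y}}$ needs the symmetric estimate $\|\bm{P}_{\bm{y}}^B\bm{P}_{\bm{x}_0}^B|_{\T_{\bm{y}}}-\mathrm{Id}_{\T_{\bm{y}}}\|\le\|\bm{P}_{\bm{y}}^B\|\,\|\bm{P}_{\bm{x}_0}^B-\bm{P}_{\bm{y}}^B\|$, together with local boundedness of $\|\bm{P}_{\bm{y}}^B\|$.

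\emph{Meaning of ``$A_{\bm{y}}$ in these coordinates''.} This varies continuously only once $\Phi_{\bm{y}}^{-1}$ is written through fixed-space operators, for example $\Phi_{\bm{y}}^{-1}=M_{\bm{y}}^{-1}\bm{P}_{\bm{x}_0}^B|_{\T_{\bm{y}}}$. Set $N_{\bm{y}}:=\bm{P}_{\bm{x}_0}^B\bm{P}_{\bm{y}}^B(\bm{P}_{\bm{y}}^B)^*\bm{P}_{\bm{y}}^B|_{\T_{\bm{x}_0}}$. Then $\Phi_{\bm{y}}^{-1}A_{\bm{y}}\Phi_{\bm{y}}=M_{\bm{y}}^{-1}N_{\bm{y}}$. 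The pulled-back form becomes $(\Phi_{\bm{y}}N_{\bm{y}}^{-1}M_{\bm{y}}\bm{u},\Phi_{\bm{y}}\bm{v})_{\mathcal{E}}$, and every factor is norm-continuous in $\bm{y}$. Alternatively, writing the pulled-back form as $(\widehat K_{\bm{y}}\Phi_{\bm{y}}\bm{u},\Phi_{\bm{y}}\bm{v})_{\mathcal{E}}$ removes any need for $\Phi_{\bm{y}}^{-1}$; this is exactly what the paper's extension trick buys.

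With these additions your proof is complete.
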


\begin{proof}
Fix $\bm{x}\in \mathcal{M}^B$. For any \(\bm{u},\bm{v}\in \T_{\bm{x}}\), the definition of \(A_{\bm{x}}\) gives
\begin{equation}\label{eqn:Ax=PBx*}
    (A_{\bm{x}}\bm{u},\bm{v})_{\mathcal{E}} = \left((\bm{P}_{\bm{x}}^B)^*\bm{u}, (\bm{P}_{\bm{x}}^B)^*\bm{v}\right)_{\mathcal{E}}.
\end{equation}
Thus \(A_{\bm{x}}\) is self-adjoint and positive semidefinite. Since \(\bm{P}_{\bm{x}}^B\bm{v}=\bm{v}\) for every \(\bm{v}\in \T_{\bm{x}}\), we have
\[
    \|\bm{v}\|_{\mathcal{E}}^2 = (\bm{P}_{\bm{x}}^B\bm{v},\bm{v})_{\mathcal{E}} = \left(\bm{v}, (\bm{P}_{\bm{x}}^B)^*\bm{v}\right)_{\mathcal{E}} \le \|\bm{v}\|_{\mathcal{E}} \|(\bm{P}_{\bm{x}}^B)^*\bm{v}\|_{\mathcal{E}}.
\]
Hence $\|(\bm{P}_{\bm{x}}^B)^*\bm{v}\|_{\mathcal{E}}\ge\|\bm{v}\|_{\mathcal{E}}$, and therefore
\[
    (A_{\bm{x}}\bm{v},\bm{v})_{\mathcal{E}} = \|(\bm{P}_{\bm{x}}^B)^*\bm{v}\|_{\mathcal{E}}^2 \ge \|\bm{v}\|_{\mathcal{E}}^2,
\]
so the bilinear form \eqref{eqn:Ax=PBx*} is coercive on \(\T_{\bm{x}}\) with coercivity constant $1$. Meanwhile, by the Cauchy--Schwarz inequality, \eqref{eqn:Ax=PBx*}, and the boundedness of $\bm{P}_{\bm{x}}^B$ recorded in Lemma~\ref{lem:PxB upper bound}, we have
\[
    (A_{\bm{x}}\bm{u},\bm{v})_{\mathcal{E}} \le \|(\bm{P}_{\bm{x}}^B)^*\bm{u}\|_{\mathcal{E}} \|(\bm{P}_{\bm{x}}^B)^*\bm{v}\|_{\mathcal{E}} \le \|\bm{P}_{\bm{x}}^B\|^2 \|\bm{u}\|_{\mathcal{E}}\|\bm{v}\|_{\mathcal{E}},
\]
and so the form is also bounded. By the Lax--Milgram theorem, \(A_{\bm{x}}\) is boundedly invertible on \(\T_{\bm{x}}\), and the coercivity constant $1$ yields $\|K_{\bm{x}}\|\le 1$. In the sense of quadratic forms on \(\T_{\bm{x}}\), the two estimates above read
\begin{equation}\label{eqn:Ax quadratic form bounds}
    \mathrm{Id}_{\T_{\bm{x}}} \le A_{\bm{x}} \le \|\bm{P}_{\bm{x}}^B\|^2\,\mathrm{Id}_{\T_{\bm{x}}}, \qquad \forall\,\bm{x}\in\mathcal{M}^B,
\end{equation}
and hence
\begin{equation}\label{eqn:Kx quadratic form bounds}
    \frac{1}{\|\bm{P}_{\bm{x}}^B\|^2}\,\mathrm{Id}_{\T_{\bm{x}}} \le K_{\bm{x}} \le \mathrm{Id}_{\T_{\bm{x}}}, \qquad \forall\,\bm{x}\in\mathcal{M}^B.
\end{equation}
In particular, \(K_{\bm{x}}\) is self-adjoint and positive definite, so \(\widetilde g_{\bm{x}}\) is a symmetric positive-definite bilinear form on \(\T_{\bm{x}}\), and therefore an inner product, whose induced norm is equivalent to the ambient norm by \eqref{eqn:Kx quadratic form bounds}. Since \(\T_{\bm{x}}\) is complete under the ambient Hilbert norm $\norm{\cdot}_{\mathcal{E}}$, it is also complete under \(\|\cdot\|_{\widetilde g_{\bm{x}}}\). For \(\bm{x}\in X\), Lemma~\ref{lem:PxB upper bound} gives $\|\bm{P}_{\bm{x}}^B\|\le K_B$, so \eqref{eqn:Kx quadratic form bounds} strengthens to
\[
    \frac{1}{K_B^2}\|\bm{v}\|_{\mathcal{E}}^2 \le (K_{\bm{x}}\bm{v},\bm{v})_{\mathcal{E}} \le \|\bm{v}\|_{\mathcal{E}}^2, \qquad \forall\,\bm{x}\in X,\ \bm{v}\in \T_{\bm{x}};
\]
taking square roots proves \eqref{eqn:metric equivalence compact}.

\noindent We next prove the continuity of the section \(\bm{x}\mapsto\widetilde g_{\bm{x}}\). Since \(K_{\bm{x}}\) and \(K_{\bm{y}}\) act on different fibers, we compare their extensions to \(\mathcal{E}\). Define $\widehat A_{\bm{x}}:=A_{\bm{x}}\bm{P}_{\bm{x}}^B+(I-\bm{P}_{\bm{x}}^B)\in\mathcal{L}(\mathcal{E})$, which is invertible with \(\widehat A_{\bm{x}}^{-1}=K_{\bm{x}}\bm{P}_{\bm{x}}^B+(I-\bm{P}_{\bm{x}}^B)=:\widehat K_{\bm{x}}\). By Assumption~\ref{asmp:boundedness of P}, \(\bm{x}\mapsto \bm{P}_{\bm{x}}\) is continuous in operator norm, and hence so are \(\bm{x}\mapsto \bm{P}_{\bm{x}}^B\) and \(\bm{x}\mapsto \widehat A_{\bm{x}}\). Moreover, \(\|K_{\bm{x}}\|\le 1\) and the uniform bound \(\|\bm{P}_{\bm{x}}^B\|\le K_{P^B}\) give \(\|\widehat K_{\bm{x}}\|\le 1+2K_{P^B}=:C\). The resolvent identity \(\widehat K_{\bm{x}}-\widehat K_{\bm{y}}=\widehat K_{\bm{x}}(\widehat A_{\bm{y}}-\widehat A_{\bm{x}})\widehat K_{\bm{y}}\), valid in \(\mathcal{L}(\mathcal{E})\), then yields
\[
\|\widehat K_{\bm{x}}-\widehat K_{\bm{y}}\|\le C^2\|\widehat A_{\bm{x}}-\widehat A_{\bm{y}}\|,
\]
so \(\bm{x}\mapsto \widehat K_{\bm{x}}\) is continuous in operator norm. Since \(\widetilde g_{\bm{x}}(\bm{u},\bm{v})=(\widehat K_{\bm{x}}\bm{u},\bm{v})_{\mathcal{E}}\) for \(\bm{u},\bm{v}\in\T_{\bm{x}}\mathcal{M}^B\), the map \((\bm{x},\bm{u},\bm{v})\mapsto\widetilde g_{\bm{x}}(\bm{u},\bm{v})\) is jointly continuous on \(\T\mathcal{M}^B\oplus\T\mathcal{M}^B\); equivalently, \(\widetilde g\) is continuous in every local trivialization of the bundle of symmetric bilinear forms on \(\T\mathcal{M}^B\). Since each \(\widetilde g_{\bm{x}}\) induces a norm equivalent to the ambient fiber norm, \(\widetilde g\) is a continuous strong Riemannian metric on \(\mathcal{M}^B\) in the sense of \cite[Ch.~VII]{lang2012fundamentals}.


\noindent We now proceed to prove \eqref{eqn:projection induced exact gradient}. By \eqref{eqn:PxBstar gradient identity} and \eqref{eqn:Ax def}, we have
\[
    \bm{G}_{\bm{x}} = \bm{P}_{\bm{x}}^B(\bm{P}_{\bm{x}}^B)^*\grad J(\bm{x})=A_{\bm{x}}\grad J^B(\bm{x}).
\]
For any \(\bm{v}\in \T_{\bm{x}}\), it follows that
\[
    \widetilde{g}_{\bm{x}}(\bm{G}_{\bm{x}},\bm{v}) = (K_{\bm{x}}\bm{G}_{\bm{x}},\bm{v})_{\mathcal{E}} = (K_{\bm{x}}A_{\bm{x}}\grad J^B(\bm{x}),\bm{v})_{\mathcal{E}} = (\grad J^B(\bm{x}),\bm{v})_{\mathcal{E}} = DJ^B(\bm{x})[\bm{v}],
\]
which proves \eqref{eqn:projection induced exact gradient}.
\end{proof}

\begin{corollary}[Uniform Norm Equivalences]\label{cor:uniform norm equivalences}
Under the same assumptions as Proposition~\ref{prop:projection induced metric gradient}, for every \(\bm{x}\in X\), the following estimates hold:
\begin{equation}\label{eqn:Pstar gradient equivalence}
    \|\grad J^B(\bm{x})\|_{\mathcal{E}}
    \le
    \|\bm{G}_{\bm{x}}\|_{\widetilde{g}_{\bm{x}}}
    =\|(\bm{P}_{\bm{x}}^B)^*\grad J(\bm{x})\|_{\mathcal{E}}
    \le
    K_B\|\grad J^B(\bm{x})\|_{\mathcal{E}},
\end{equation}
\begin{equation}\label{eqn:Gx gradient equivalence}
    \|\grad J^B(\bm{x})\|_{\mathcal{E}}
    \le
    \|\bm{G}_{\bm{x}}\|_{\mathcal{E}}
    \le
    K_B^2\|\grad J^B(\bm{x})\|_{\mathcal{E}},
\end{equation}
and
\begin{equation}\label{eqn:Gx angle estimate}
    \left(
        \bm{G}_{\bm{x}},
        \grad J^B(\bm{x})
    \right)_{\mathcal{E}}
    \ge
    \frac{1}{K_B^2}
    \|\bm{G}_{\bm{x}}\|_{\mathcal{E}}
    \|\grad J^B(\bm{x})\|_{\mathcal{E}} .
\end{equation}
\end{corollary}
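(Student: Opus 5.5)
The plan is to reduce all three estimates to the two operator bounds $\mathrm{Id}_{\T_{\bm{x}}}\le A_{\bm{x}}\le \|\bm{P}_{\bm{x}}^B\|^2\mathrm{Id}_{\T_{\bm{x}}}$ from \eqref{eqn:Ax quadratic form bounds}, combined with $\|\bm{P}_{\bm{x}}^B\|\le K_B$ on $X$ from Lemma~\ref{lem:PxB upper bound}. Fix $\bm{x}\in X$ and set $\bm{g}:=\grad J^B(\bm{x})\in\T_{\bm{x}}\mathcal{M}^B$ and $\bm{w}:=(\bm{P}_{\bm{x}}^B)^*\bm{g}$. By Lemma~\ref{lem:PxBstar gradient identity}, $\bm{w}=(\bm{P}_{\bm{x}}^B)^*\grad J(\bm{x})$. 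As in the proof of Proposition~\ref{prop:projection induced metric gradient}, this gives $\bm{G}_{\bm{x}}=A_{\bm{x}}\bm{g}$. Everything then follows from $\bm{G}_{\bm{x}}=A_{\bm{x}}\bm{g}$ and \eqref{eqn:Ax=PBx*}.

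For \eqref{eqn:Pstar gradient equivalence}, I first compute the metric norm using $K_{\bm{x}}A_{\bm{x}}=\mathrm{Id}_{\T_{\bm{x}}}$:
\[
\|\bm{G}_{\bm{x}}\|_{\widetilde g_{\bm{x}}}^2=(K_{\bm{x}}A_{\bm{x}}\bm{g},A_{\bm{x}}\bm{g})_{\mathcal{E}}=(\bm{g},A_{\bm{x}}\bm{g})_{\mathcal{E}}=\|(\bm{P}_{\bm{x}}^B)^*\bm{g}\|_{\mathcal{E}}^2=\|\bm{w}\|_{\mathcal{E}}^2 .
\]
This is the claimed equality. The lower bound $\|\bm{w}\|_{\mathcal{E}}\ge\|\bm{g}\|_{\mathcal{E}}$ is exactly the inequality $\|(\bm{P}_{\bm{x}}^B)^*\bm{v}\|_{\mathcal{E}}\ge\|\bm{v}\|_{\mathcal{E}}$ for $\bm{v}\in\T_{\bm{x}}$ established in that proof. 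The upper bound is $\|\bm{w}\|_{\mathcal{E}}\le\|(\bm{P}_{\bm{x}}^B)^*\|\,\|\bm{g}\|_{\mathcal{E}}=\|\bm{P}_{\bm{x}}^B\|\,\|\bm{g}\|_{\mathcal{E}}\le K_B\|\bm{g}\|_{\mathcal{E}}$.

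For \eqref{eqn:Gx gradient equivalence}, the lower bound follows from
\[
\|\bm{g}\|_{\mathcal{E}}^2\le(A_{\bm{x}}\bm{g},\bm{g})_{\mathcal{E}}\le\|A_{\bm{x}}\bm{g}\|_{\mathcal{E}}\|\bm{g}\|_{\mathcal{E}},
\]
after dividing by $\|\bm{g}\|_{\mathcal{E}}$; the case $\bm{g}=\bm{0}_{\mathcal{E}}$ is trivial since then $\bm{G}_{\bm{x}}=\bm{0}_{\mathcal{E}}$. The upper bound is $\|A_{\bm{x}}\bm{g}\|_{\mathcal{E}}\le\|\bm{P}_{\bm{x}}^B\|\,\|(\bm{P}_{\bm{x}}^B)^*\|\,\|\bm{g}\|_{\mathcal{E}}\le K_B^2\|\bm{g}\|_{\mathcal{E}}$.

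For the angle estimate \eqref{eqn:Gx angle estimate}, I chain the previous two results:
\[
(\bm{G}_{\bm{x}},\bm{g})_{\mathcal{E}}=(A_{\bm{x}}\bm{g},\bm{g})_{\mathcal{E}}=\|\bm{w}\|_{\mathcal{E}}^2\ge\|\bm{g}\|_{\mathcal{E}}^2\ge \frac{1}{K_B^2}\|\bm{G}_{\bm{x}}\|_{\mathcal{E}}\|\bm{g}\|_{\mathcal{E}}.
\]
The last step uses the upper bound in \eqref{eqn:Gx gradient equivalence}. There is no real obstacle here. The only points needing care are invoking Lemma~\ref{lem:PxBstar gradient identity} so that $\grad J(\bm{x})$ may be replaced by $\grad J^B(\bm{x})$ (which makes the coercivity on $\T_{\bm{x}}$ applicable), and noting that $\|(\bm{P}_{\bm{x}}^B)^*\|=\|\bm{P}_{\bm{x}}^B\|$.
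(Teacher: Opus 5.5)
Your proof is correct and follows the paper's argument essentially step for step. Both rest on $\bm G_{\bm x}=A_{\bm x}\grad J^B(\bm x)$, the bounds $\mathrm{Id}\le A_{\bm x}\le K_B^2\,\mathrm{Id}$ on $\T_{\bm x}\mathcal M^B$, and the same Cauchy--Schwarz chain for the lower bound in \eqref{eqn:Gx gradient equivalence} and for the angle estimate. The only cosmetic difference is in \eqref{eqn:Pstar gradient equivalence}: you use $\|(\bm P_{\bm x}^B)^*\bm v\|_{\mathcal E}\ge\|\bm v\|_{\mathcal E}$ and $\|(\bm P_{\bm x}^B)^*\|\le K_B$ directly, whereas the paper cites the norm equivalence \eqref{eqn:metric equivalence compact}, which is derived from exactly these two facts.
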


\begin{proof}
We first establish \eqref{eqn:Pstar gradient equivalence}. Using
\[
\|\bm{G}_{\bm{x}}\|_{\widetilde{g}_{\bm{x}}}
= \sqrt{(K_{\bm{x}}\bm{G}_{\bm{x}},\bm{G}_{\bm{x}})_{\mathcal{E}}}
= \sqrt{(\grad J(\bm{x}),\bm{P}_{\bm{x}}^B(\bm{P}_{\bm{x}}^B)^*\grad J(\bm{x}))_{\mathcal{E}}}
= \|(\bm{P}_{\bm{x}}^B)^*\grad J^B(\bm{x})\|_{\mathcal{E}},
\]
together with \eqref{eqn:metric equivalence compact}, we obtain \eqref{eqn:Pstar gradient equivalence}.
For \eqref{eqn:Gx gradient equivalence}, the upper bound follows from \(\bm{G}_{\bm{x}}=A_{\bm{x}}\grad J^B(\bm{x})\) and \(\|A_{\bm{x}}\|\le K_B^2\). The lower bound follows from
\[
    \|\bm{G}_{\bm{x}}\|_{\mathcal{E}}
    \|\grad J^B(\bm{x})\|_{\mathcal{E}}
    \ge
    (\bm{G}_{\bm{x}},\grad J^B(\bm{x}))_{\mathcal{E}}
    =
    (A_{\bm{x}}\grad J^B(\bm{x}),\grad J^B(\bm{x}))_{\mathcal{E}}
    \ge
    \|\grad J^B(\bm{x})\|_{\mathcal{E}}^2.
\]
Finally, for \eqref{eqn:Gx angle estimate}, combining $(\bm{G}_{\bm{x}},\grad J^B(\bm{x}))_{\mathcal{E}}\ge\|\grad J^B(\bm{x})\|_{\mathcal{E}}^2$ with \(\|\bm{G}_{\bm{x}}\|_{\mathcal{E}} \le K_B^2\|\grad J^B(\bm{x})\|_{\mathcal{E}}\) gives
\[
    (\bm{G}_{\bm{x}},\grad J^B(\bm{x}))_{\mathcal{E}}
    \ge
    \|\grad J^B(\bm{x})\|_{\mathcal{E}}^2
    \ge
    \frac{1}{K_B^2}\|\bm{G}_{\bm{x}}\|_{\mathcal{E}}\|\grad J^B(\bm{x})\|_{\mathcal{E}}.
\]
This completes the proof.
\end{proof}

\begin{remark}[Variable Riemannian Metric Interpretation]
\label{rem:variable metric interpretation}

\noindent Proposition~\ref{prop:projection induced metric gradient} reveals the geometric mechanism underlying the projection-induced gradient construction. For any $\bm{x}\in \mathcal{M}^B$, the explicitly computed vector $\bm G_{\bm{x}}=\bm P_{\bm{x}}^B(\bm P_{\bm{x}}^B)^*\operatorname{grad}J(\bm{x})$ is not merely a heuristic surrogate for $\grad J^B(\bm{x})$. Rather, under the projection-induced metric $\widetilde g_{\bm{x}}$, it serves as the exact Riemannian gradient of the objective $J^B$.

\noindent This shift in perspective is central to our computational framework which is shown in our companion work \cite{song2026optimization2}. As illustrated in Figure~\ref{fig:projection_metric_framework}, adopting $\widetilde g_{\bm{x}}$ allows us to bypass the computationally implicit tangent-space Riesz solve required by the ambient metric. Instead, it unlocks a direct, operator-based evaluation of the exact Riemannian gradient.

\begin{figure}[H]
\centering
\begin{tikzpicture}[
node distance=4cm,
>=latex,
every node/.style={align=center}
]

\node (metric1) at (0,1.5)
{$(\cdot,\cdot)_{\mathcal E}$};

\node (metric2) at (8,1.5)
{$ \widetilde g_{\bm{x}}(\bm{u},\bm{v}) = (K_{\bm{x}}\bm{u},\bm{v})_{\mathcal E} $};

\draw[->,thick]
(metric1) -- node[above]
{$K_{\bm{x}}=\left(\bm P_{\bm{x}}^B(\bm P_{\bm{x}}^B)^*\big|_{\T_{\bm{x}}\mathcal{M}^B}\right)^{-1}$}
(metric2);

\node (grad1) at (0,0)
{$ \operatorname{grad}J^B(\bm{x}) $};

\node (grad2) at (8,0)
{$ \bm G_{\bm{x}} = \bm P_{\bm{x}}^B(\bm P_{\bm{x}}^B)^* \operatorname{grad}J(\bm{x}) $};

\draw[<->,thick]
(grad1) -- (grad2);

\node (comp1) at (0,-1.5)
{Implicit \\Riesz representative computation};

\node (comp2) at (8,-1.5)
{Explicit \\Direct operator evaluation};

\draw[<->,thick]
(comp1) -- (comp2);

\draw[dashed]
(metric1) -- (grad1);

\draw[dashed]
(metric2) -- (grad2);

\draw[dashed]
(grad1) -- (comp1);

\draw[dashed]
(grad2) -- (comp2);

\end{tikzpicture}

\caption{Projection-induced Riemannian metric and gradient.}
\label{fig:projection_metric_framework}
\end{figure}

\noindent The role of the metric $\widetilde g_{\bm{x}}$ is primarily analytical. Although the gradient characterization is formulated with respect to $\widetilde g_{\bm{x}}$, the identity $\widetilde g_{\bm{x}}(\bm G_{\bm{x}},\bm v)=(\operatorname{grad}J(\bm{x}),\bm v)_{\mathcal E}$ ensures the algorithm can be implemented without explicitly assembling or applying the inverse operator $K_{\bm{x}}$. Furthermore, the uniform norm equivalence \eqref{eqn:metric equivalence compact} inherently links the projection-induced metric with the ambient-induced metric, thereby enabling the convergence analysis to be carried out within the standard Riemannian framework.

\end{remark}

\section{Foundations of Algorithmic Design I: Riemannian Line-Search Methods}\label{sec:algorithmLS}

\noindent The geometric constructions and analytic estimates developed in Sections~\ref{sec:geometry of MB}--\ref{sec:projection-induced gradient}, including the retraction, the projection operators, and the projection-induced Riemannian gradient, provide the basis for both the design and convergence analysis of Riemannian algorithms for \eqref{J^B}, from first-order gradient methods to second-order approaches.

\noindent To substantiate the utility of these foundational derivations, we select two canonical algorithms as illustrative examples. In this section, we focus on the Riemannian line-search method as a showcase to demonstrate algorithmic design grounded in the theoretical foundations derived in Sections \ref{sec:geometry of MB}--\ref{sec:properties}. In the following subsections, we present the detailed algorithm and provide a rigorous convergence analysis. Subsequently, in Section \ref{sec:algorithmTR}, we extend to the Riemannian trust-region method, further exemplifying how these foundational tools support more complex, second-order optimization strategies.

\subsection{Algorithm Design}
\noindent We propose a Riemannian line-search framework on the manifold \(\mathcal{M}^B\). Starting from an initial point \(\bm{x}^0 \in \mathcal{M}^B\), the method generates a sequence \(\{\bm{x}^k\}\) by moving along retraction curves induced by \(\bm{R}_{\bm{x}}^B\). At each iteration, the algorithm identifies a descent direction in the tangent space and computes a step size that satisfies the Armijo sufficient decrease condition.
\begin{algorithm}[H]
	\caption{A Riemannian Line-Search Method for (\ref{J^B})}
	\label{alg:line-search}
	\begin{algorithmic}[1]
		\State Given $\bm{x}^0\in \mathcal{M}^B$, initial step size $\bar{\alpha}>0$, backtracking factor $\rho\in (0, 1)$, and Armijo parameter $c\in (0, 1)$.
		\While {not converged}
		\State Select a unit-norm vector $\bm{d}^k \in \T_{\bm{x}^k}\mathcal{M}^B$ such that $-\bm{d}^k$ is a descent direction;
        \State Initialize step size $\bar{\alpha}_k = \min\{\bar{\alpha},\Delta_{RL}\}$;
        \State \textbf{Backtracking line-search:} Find the smallest $j \ge 0$ such that $\alpha_k = \bar{\alpha}_k\rho^j$ satisfies the Armijo condition:
		\begin{equation}\label{eqn:armijo}
			J^B\left(\bm{R}^B_{\bm{x}^k}\left(-\alpha_k \bm{d}^k\right)\right) \le J^B(\bm{x}^k) - c\alpha_k ( \grad J^B(\bm{x}^k), \bm{d}^k )_{\bm{x}^k}.
		\end{equation}
		\State Update iterate via retraction:
		\begin{equation}\label{eqn:update_ls}
			\bm{x}^{k+1} = \bm{R}^B_{\bm{x}^k}(-\alpha_k \bm{d}^k).
		\end{equation}
		\State Update the iteration index: \( k = k + 1 \);
		\EndWhile
	\end{algorithmic}
\end{algorithm}

\vspace{-0.15cm}
\noindent For the practical realization of Algorithm \ref{alg:line-search}, we now specify the search direction, the step-size rule, the stopping criterion, and the evaluation of the retraction.

\medskip
\noindent\textbf{Search direction.}
A typical choice for $\bm{d}^k$ is \(\grad J^B(\bm{x}^k)/\norm{\grad J^B(\bm{x}^k)}_{\bm{x}^k}\), where the evaluation of \(\grad J^B(\bm{x}^k)\) is expensive. By Theorem \ref{thm: taylor}, \(\grad J^B(\bm{x}^k)\) is characterized through
\begin{equation}\label{eqn:gradJBxk}
    \bigl(\grad J^B(\bm{x}^k), \bm{v}\bigr)_{\bm{x}^k}
    =
    \bigl(\grad J(\bm{x}^k), \bm{v}\bigr)_{\mathcal{E}},
    \qquad
    \forall \bm{v} \in \T_{\bm{x}^k}\mathcal{M}^B.
\end{equation}

\noindent Explicitly recovering \(\grad J^B(\bm{x}^k)\) from \eqref{eqn:gradJBxk} requires solving a tangent-space Riesz representation problem, which is computationally costly. To avoid this, we employ the projection-induced gradient $\bm{G}_{\bm{x}^k}=\bm{P}_{\bm{x}^k}^B(\bm{P}_{\bm{x}^k}^B)^*\grad J(\bm{x}^k)$, defined in \eqref{eqn:projection induced gradient def}. By Proposition~\ref{prop:projection induced metric gradient}, $\bm{G}_{\bm{x}^k}$ is the exact Riemannian gradient with respect to the projection-induced Riemannian metric and is uniformly aligned with $\grad J^B(\bm{x}^k)$. We therefore define the normalized search direction by
\begin{equation}\label{eqn:line-search direction imp}
 \bm{d}^k
 =
 \frac{\bm{G}_{\bm{x}^k}}{\|\bm{G}_{\bm{x}^k}\|_{\bm{x}^k}}.
\end{equation}
As justified later in Remark~\ref{rem:choose of dk}, this choice ensures that \(-\bm{d}^k\) is a valid descent direction whenever \(\grad J^B(\bm{x}^k)\neq \bm{0}_{\mathcal E}\).

\medskip
\noindent\textbf{Step-size selection.} To determine a step size that satisfies the Armijo condition \eqref{eqn:armijo}, we employ the backtracking line-search procedure outlined in Algorithm \ref{alg:backtracking}.

\begin{algorithm}[H]
	\caption{A Backtracking Line-Search Strategy on $\mathcal{M}^B$}
	\label{alg:backtracking}
	\begin{algorithmic}[1]
		\Require Current iterate $\bm{x}^k \in \mathcal{M}^B$, descent direction $\bm{d}^k \in \T_{\bm{x}^k}\mathcal{M}^B$, Riemannian gradient $\grad J^B(\bm{x}^k)$, initial step $\bar{\alpha}_k= \min\{\bar{\alpha},\Delta_{RL}\} > 0$, contraction factor $\rho \in (0, 1)$, and sufficient decrease parameter $c \in (0, 1)$.
		
		\State Set \(\alpha\leftarrow \bar\alpha_k\).
		
		\State \textbf{Repeat} until the Armijo condition is satisfied:
		\While{ $J^B\left(\bm{R}^B_{\bm{x}^k}\left(-\alpha \bm{d}^k\right)\right) > J^B(\bm{x}^k) - c\alpha ( \grad J^B(\bm{x}^k), \bm{d}^k )_{\bm{x}^k}$}
			\State Reduce step size:
			\begin{equation}
				\alpha \leftarrow \rho \alpha
			\end{equation}
		\EndWhile
		
		\State \Return step size $\alpha_k = \alpha$.
	\end{algorithmic}
\end{algorithm}

\noindent Given the direction \(\bm{d}^k\), the step size \(\alpha_k\) is determined by the Armijo backtracking procedure in Algorithm \ref{alg:line-search}. Since \(\bm{d}^k \in \T_{\bm{x}^k}\mathcal{M}^B\), the defining relation of the Riemannian gradient implies
\begin{equation*}
    \bigl(\grad J^B(\bm{x}^k), \bm{d}^k\bigr)_{\bm{x}^k}
    =
    \bigl(\grad J(\bm{x}^k), \bm{d}^k\bigr)_{\mathcal{E}}.
\end{equation*}
Therefore, in implementation, the Armijo condition \eqref{eqn:armijo} can be checked without explicitly computing \(\grad J^B(\bm{x}^k)\). In the next section, Lemma \ref{lem:armijo sufficient decrease} shows that this backtracking procedure always produces a positive step size and that the iterates \(\{\bm{x}^k\}_{k\ge 0}\) remain in the bounded set \(X\) defined in \eqref{eqn:level set line-search}.

\medskip
\noindent\textbf{Termination criterion.} A natural stopping condition is $\norm{\grad J^B(\bm{x}^k)}_{\bm{x}^k}<\mathrm{tol}$. However, evaluating \(\grad J^B(\bm{x}^k)\) requires solving a tangent-space Riesz representation problem and is therefore computationally expensive. To avoid this additional cost, we instead employ the criterion
\[
\norm{\bm{G}_{\bm{x}^k}}_{\widetilde{g}_{\bm{x}^k}}=\norm{(\bm{P}_{\bm{x}^k}^B)^*\grad J(\bm{x}^k)}_{\mathcal E}<\mathrm{tol}.
\]
The validity of this substitution follows from Corollary~\ref{cor:uniform norm equivalences}, which establishes the norm equivalence
\[
\norm{\grad J^B(\bm{x}^k)}_{\bm{x}^k}\le\norm{(\bm{P}_{\bm{x}^k}^B)^*\grad J(\bm{x}^k)}_{\mathcal E}\le K_B\norm{\grad J^B(\bm{x}^k)}_{\bm{x}^k}.
\]
Hence, the above stopping criterion provides a computationally efficient equivalent form of the stationarity measure \(\norm{\grad J^B(\bm{x}^k)}_{\bm{x}^k}\), with equivalence constants controlled uniformly by \(K_B\). In particular, it vanishes if and only if the first-order stationarity condition \(\grad J^B(\bm{x}^k)=\bm 0_{\mathcal E}\) holds.

\medskip
\noindent\textbf{Evaluation of the retraction.}
To compute the update \(\bm{R}^B_{\bm{x}^k}(-\alpha_k \bm{d}^k)\), we must solve the nonlinear equation
\begin{equation}\label{retraction newton step line-search}
	B\bm{R}_{\bm{x}^k}\bigl(\bm{p}^k-\bm{P}_{\bm{x}^k}B^* \delta \bigr)=c,
\end{equation}
where \(\bm{p}^k := -\alpha_k \bm{d}^k\). We solve \eqref{retraction newton step line-search} by Newton's method. Using \eqref{eqn:Fx}, the derivative with respect to \(\delta\) is given by
\begin{equation*}
	\frac{\partial F_{\bm{x}^k}}{\partial \delta}\bigg|_{(\bm{p}^k, \delta)} 
	=
	-B \circ D\bm{R}_{\bm{x}^k}\bigl(\bm{p}^k-\bm{P}_{\bm{x}^k}B^*\delta\bigr)
	\circ \bm{P}_{\bm{x}^k}B^*.
\end{equation*}
This provides the practical realization of the retraction step in Algorithm \ref{alg:line-search}.


\subsection{Convergence Analysis for Algorithm \ref{alg:line-search}}
Let $\bm{x}^0\in \mathcal{M}^B$ denote the initial point. We define the sublevel set $X$ as
\begin{equation}\label{eqn:level set line-search}
    X := \left\{\bm{x} \in \mathcal{M}^B \mid J^B(\bm{x}) \le J^B(\bm{x}^0)\right\}\subset \mathcal{M}^B\subset\mathcal{M}.
\end{equation}
By the coercivity of \(J\), $X$ is bounded. Since the estimates in Section~\ref{sec:properties} were established for an arbitrary bounded set, we henceforth specialize the set \(X\) in \eqref{eqn:X} to the one defined in \eqref{eqn:level set line-search}, and keep the same notation for the corresponding constants. Moreover, \(J^B\) is bounded from below on \(\mathcal{M}^B\). Building on the properties established in Section \ref{sec:properties}, we proceed with the following convergence analysis.

\begin{assumption}\label{asmp:dk DJBxk angle}
    We assume that the sequence of descent directions $\{\bm{d}^k\}_{k\ge 0}$ generated by Algorithm \ref{alg:line-search} satisfies the following condition: there exists a constant $K_a>0$ such that
    \begin{equation}\label{eqn:uniform lower bound for angle dk DJBxk}
         \left(\bm{d}^k,\grad J^B(\bm{x}^k)\right)_{\bm{x}^k}\ge K_a\norm{\bm{d}^k}_{\bm{x}^k}\norm{\grad J^B(\bm{x}^k)}_{\bm{x}^k}=K_a\norm{\grad J^B(\bm{x}^k)}_{\bm{x}^k}.
    \end{equation}
\end{assumption}

\noindent It is important to verify that Assumption \ref{asmp:dk DJBxk angle} is practical and compatible with our proposed computational strategies in the previous section. The following remark confirms that our specific choice of the normalized projection-induced Riemannian gradient \eqref{eqn:line-search direction imp} satisfies Assumption \ref{asmp:dk DJBxk angle}.

\begin{remark}\label{rem:choose of dk}
    By Corollary~\ref{cor:uniform norm equivalences}, whenever
    \(\|\grad J^B(\bm{x}^k)\|_{\bm{x}^k}>0\), the projection-induced gradient
    \(
        \bm{G}_{\bm{x}^k}
        =
        \bm{P}_{\bm{x}^k}^B
        (\bm{P}_{\bm{x}^k}^B)^*
        \grad J(\bm{x}^k)
    \)
    is nonzero and satisfies
    \[
    \left(
        \bm{G}_{\bm{x}^k},
        \grad J^B(\bm{x}^k)
    \right)_{\mathcal E}
    \ge
    \frac{1}{K_B^2}
    \|\bm{G}_{\bm{x}^k}\|_{\mathcal E}
    \|\grad J^B(\bm{x}^k)\|_{\mathcal E}.
    \]
    Hence the normalized direction \(\bm{d}^k=\bm{G}_{\bm{x}^k}/\|\bm{G}_{\bm{x}^k}\|_{\bm{x}^k}\) satisfies Assumption~\ref{asmp:dk DJBxk angle} with \(K_a=1/K_B^2\).
\end{remark}

\noindent In the remainder of this subsection, we work under Assumption~\ref{asmp:dk DJBxk angle}. Having established the validity of the search direction, we next consider the step-size selection. The following lemma shows that the backtracking line search terminates with a step size satisfying a sufficient decrease condition.

\begin{lemma}\label{lem:armijo sufficient decrease}
    At iteration $k$, the backtracking line-search procedure (Algorithm \ref{alg:backtracking}) with parameters $\rho, c \in (0,1)$ yields a step size $\alpha_k > 0$ satisfying
    \begin{multline}\label{eqn:armijo sufficient decrease}
        J^B(\bm{x}^k)-J^B\left(\bm{R}^B_{\bm{x}^k}\left(-\alpha_k \bm{d}^k\right)\right) \ge\\
        cK_a\min\left\{\bar{\alpha}_k,\ \rho\Delta_{RL},\ \frac{2\rho(1-c)}{\beta_{RL}}K_a\norm{\grad J^B(\bm{x}^k)}_{\bm{x}^k}\right\}\norm{\grad J^B(\bm{x}^k)}_{\bm{x}^k}.
    \end{multline}
    Furthermore, given the update $\bm{x}^{k+1} = \bm{R}^B_{\bm{x}^k}(-\alpha_k \bm{d}^k)$, the monotonicity condition $J^B(\bm{x}^k)\ge J^B(\bm{x}^{k+1})$ holds, implying that the entire sequence $\{\bm{x}^k\}_{k\ge 0}$ generated by Algorithm \ref{alg:line-search} remains within the set $X$.
\end{lemma}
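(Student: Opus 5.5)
The plan is the standard Armijo backtracking argument, driven by the quadratic upper bound of Proposition~\ref{prop:LC1 second order property}. The one subtlety is that this proposition requires the base point to lie in $X$, so the argument is organized as an induction on $k$ with the hypothesis $\bm{x}^k\in X$. The base case holds because $\bm{x}^0\in X$ by \eqref{eqn:level set line-search}.

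\textbf{Sufficient-decrease step.} Assume $\bm{x}^k\in X$ and write $g_k:=\norm{\grad J^B(\bm{x}^k)}_{\bm{x}^k}$. If $g_k=0$ there is nothing to prove, so assume $g_k>0$. Take any trial step $\alpha\le\bar\alpha_k\le\Delta_{RL}$. Then $\norm{-\alpha\bm{d}^k}_{\bm{x}^k}=\alpha\le\Delta_{RL}$, and since $\Delta_{RL}=\Delta_D<\Delta_s$ the retraction is defined there. Proposition~\ref{prop:LC1 second order property} gives
\[
J^B\bigl(\bm{R}^B_{\bm{x}^k}(-\alpha\bm{d}^k)\bigr)\le J^B(\bm{x}^k)-\alpha\bigl(\grad J^B(\bm{x}^k),\bm{d}^k\bigr)_{\bm{x}^k}+\frac{\beta_{RL}}{2}\alpha^2 .
\]
The Armijo condition \eqref{eqn:armijo} therefore holds as soon as
\[
\frac{\beta_{RL}}{2}\alpha\le(1-c)\bigl(\grad J^B(\bm{x}^k),\bm{d}^k\bigr)_{\bm{x}^k}.
\]
By Assumption~\ref{asmp:dk DJBxk angle}, this is implied by $\alpha\le \frac{2(1-c)}{\beta_{RL}}K_a g_k=:\hat\alpha_k$. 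Since $\hat\alpha_k>0$, the loop in Algorithm~\ref{alg:backtracking} terminates after finitely many reductions, giving $\alpha_k>0$.

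\textbf{Lower bound on the step size.} There are two cases. If no reduction occurs, then $\alpha_k=\bar\alpha_k$. Otherwise the previous trial $\alpha_k/\rho$ failed the Armijo test. That trial satisfied $\alpha_k/\rho\le\bar\alpha_k\le\Delta_{RL}$, so the bound above applied to it, and failure forces $\alpha_k/\rho>\hat\alpha_k$, i.e. $\alpha_k>\rho\hat\alpha_k$. Hence in all cases
\[
\alpha_k\ge\min\{\bar\alpha_k,\rho\hat\alpha_k\},
\]
which is no smaller than the three-term minimum in \eqref{eqn:armijo sufficient decrease}. The term $\rho\Delta_{RL}$ is redundant for this argument and only weakens the bound, so it may be retained harmlessly.

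\textbf{Decrease and invariance of $X$.} Combining the Armijo inequality with Assumption~\ref{asmp:dk DJBxk angle} gives
\[
J^B(\bm{x}^k)-J^B(\bm{x}^{k+1})\ge c\,\alpha_k K_a g_k ,
\]
and inserting the step-size lower bound yields \eqref{eqn:armijo sufficient decrease}. The right-hand side is nonnegative, so $J^B(\bm{x}^{k+1})\le J^B(\bm{x}^k)\le J^B(\bm{x}^0)$. The retraction maps into $\mathcal{M}^B$, so $\bm{x}^{k+1}\in X$, which closes the induction.

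\textbf{Main point of care.} The step needing the most attention is justifying that the quadratic bound is valid at every trial step, including the rejected one $\alpha_k/\rho$ used in the case analysis. This requires the trial step to lie inside $V^B_{\bm{x}^k}(\Delta_s)$ and within radius $\Delta_{RL}$, which the cap $\bar\alpha_k\le\Delta_{RL}$ guarantees. It also requires $\bm{x}^k\in X$ so that the uniform constants apply, and this is exactly why the induction is organized the way it is.
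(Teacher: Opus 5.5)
Your proof is correct and follows the same route as the paper. Both argue by induction on $\bm{x}^k\in X$, apply the quadratic upper bound of Proposition~\ref{prop:LC1 second order property} to trial steps capped by $\bar\alpha_k\le\Delta_{RL}$, use a rejected-step argument to bound $\alpha_k$ from below, and invoke Assumption~\ref{asmp:dk DJBxk angle} to replace $(\grad J^B(\bm{x}^k),\bm{d}^k)_{\bm{x}^k}$ by $K_a\norm{\grad J^B(\bm{x}^k)}_{\bm{x}^k}$. Your observation that the $\rho\Delta_{RL}$ term is redundant is accurate, since every trial step is at most $\bar\alpha_k\le\Delta_{RL}$; the paper keeps that term only because it states the rejected-step bound without using this cap.
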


\begin{proof}
    Since $\bm{x}^0 \in X$, we proceed by induction, assuming that $\bm{x}^k \in X$. Invoking Proposition \ref{prop:LC1 second order property}, for any step size $\alpha \in [0, \Delta_{RL}]$, and observing that $\norm{-\alpha \bm{d}^k}_{\bm{x}^k} = \alpha \le \Delta_{RL}$, we obtain the following estimate:
    \begin{equation}\label{eqn:armijo second order decrease}
        J^B(\bm{x}^k)-J^B(\bm{R}^B_{\bm{x}^k}(-\alpha\bm{d}^k))\ge \alpha\left(\grad J^B(\bm{x}^k),\bm{d}^k\right)_{\bm{x}^k}-\frac{\beta_{RL}}{2}\alpha^2.
    \end{equation}
    Conversely, suppose the algorithm rejects a candidate step size $\alpha$ (i.e., the Armijo condition is violated). Then, we have
    \begin{equation}\label{eqn:armijo fail}
        J^B(\bm{x}^k)-J^B(\bm{R}^B_{\bm{x}^k}(-\alpha\bm{d}^k)) < c\alpha\left( \grad J^B(\bm{x}^k), \bm{d}^k \right)_{\bm{x}^k}.
    \end{equation}
    Combining \eqref{eqn:armijo second order decrease} and \eqref{eqn:armijo fail} implies that any rejected step size $\alpha$ must satisfy
    \begin{equation}\label{eqn:armijo fail step lower bound}
        \alpha > \min\left\{\Delta_{RL},\ \frac{2(1-c)}{\beta_{RL}}\left( \grad J^B(\bm{x}^k), \bm{d}^k \right)_{\bm{x}^k}\right\}.
    \end{equation}
    It follows from \eqref{eqn:armijo second order decrease} that the Armijo sufficient decrease condition
    \begin{equation}\label{eqn:armijo decrease}
        J^B(\bm{x}^k)-J^B(\bm{R}^B_{\bm{x}^k}(-\alpha\bm{d}^k)) \ge c\alpha\left( \grad J^B(\bm{x}^k), \bm{d}^k \right)_{\bm{x}^k}
    \end{equation}
    is necessarily satisfied whenever
    \begin{equation*}
        0 \le \alpha \le \min\left\{\Delta_{RL},\ \frac{2(1-c)}{\beta_{RL}}\left( \grad J^B(\bm{x}^k), \bm{d}^k \right)_{\bm{x}^k}\right\}.
    \end{equation*}
    Consequently, if a candidate $\alpha$ falls below the bound derived in \eqref{eqn:armijo fail step lower bound}, Algorithm \ref{alg:backtracking} must terminate (or it would have terminated at a larger step size). Taking into account the backtracking reduction factor $\rho$, the accepted step size $\alpha_k$ is bounded from below by
    \begin{equation}\label{eqn:armijo alphak lower bound}
        \alpha_k \ge \min\left\{\bar{\alpha}_k,\ \rho\Delta_{RL},\ \frac{2\rho(1-c)}{\beta_{RL}}\left( \grad J^B(\bm{x}^k), \bm{d}^k \right)_{\bm{x}^k}\right\}.
    \end{equation}
    Since $\alpha_k$ satisfies the Armijo condition, we combine Assumption \ref{asmp:dk DJBxk angle} (specifically \eqref{eqn:uniform lower bound for angle dk DJBxk}) with \eqref{eqn:armijo alphak lower bound} to deduce:
    \begin{equation*}
        \begin{aligned}
            &J^B(\bm{x}^k)-J^B(\bm{R}^B_{\bm{x}^k}(-\alpha_k\bm{d}^k)) \\
            \ge& c\alpha_k( \grad J^B(\bm{x}^k), \bm{d}^k )_{\bm{x}^k}\\
            \ge& c\min\left\{\bar{\alpha}_k,\ \rho\Delta_{RL},\ \frac{2\rho(1-c)}{\beta_{RL}}\left( \grad J^B(\bm{x}^k), \bm{d}^k \right)_{\bm{x}^k}\right\}\left( \grad J^B(\bm{x}^k), \bm{d}^k \right)_{\bm{x}^k}\\
            \ge& cK_a\min\left\{\bar{\alpha}_k,\ \rho\Delta_{RL},\ \frac{2\rho(1-c)}{\beta_{RL}}K_a\norm{\grad J^B(\bm{x}^k)}_{\bm{x}^k}\right\}\norm{\grad J^B(\bm{x}^k)}_{\bm{x}^k}.
        \end{aligned}
    \end{equation*}
    This completes the proof of \eqref{eqn:armijo sufficient decrease}. Finally, the update $\bm{x}^{k+1} = \bm{R}^B_{\bm{x}^k}(-\alpha_k \bm{d}^k)$ ensures $J^B(\bm{x}^k)\ge J^B(\bm{x}^{k+1})$, which inductively confirms that the sequence $\{\bm{x}^k\}_{k\ge 0}$ generated by Algorithm \ref{alg:line-search} remains in $X$.
\end{proof}

\noindent Having established the sufficient decrease property for a single iteration, we are now positioned to prove the convergence of Algorithm \ref{alg:line-search}. The main result establishes that the gradient norm of the iterates, $\norm{\grad J^B(\bm{x}^k)}_{\mathcal{E}}$, converges to zero.

\begin{theorem}
    Let $\{\bm{x}^k\}_{k\ge 0}$ denote the sequence of iterates generated by Algorithm \ref{alg:line-search}, utilizing the backtracking line-search described in Algorithm \ref{alg:backtracking} with fixed parameters $c, \rho \in (0,1)$ and with initial step sizes
$\bar{\alpha}_k=\min\{\bar\alpha,\Delta_{RL}\}$ for all $k\ge0$. Then, it holds that
    \begin{equation*}
        \lim_{k\to+\infty}\norm{\grad J^B(\bm{x}^k)}_{\mathcal{E}}=0.
    \end{equation*}
\end{theorem}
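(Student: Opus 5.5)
The argument is the standard telescoping one, built on the sufficient-decrease estimate of Lemma~\ref{lem:armijo sufficient decrease}. That lemma keeps every iterate in the sublevel set $X$ from \eqref{eqn:level set line-search}, so all constants $\Delta_{RL},\beta_{RL},K_a$ apply uniformly along the sequence. We also have $\bar\alpha_k=\min\{\bar\alpha,\Delta_{RL}\}=:\bar\alpha_\ast$, which does not depend on $k$. Writing $g_k:=\|\grad J^B(\bm{x}^k)\|_{\bm{x}^k}$ (this equals $\|\grad J^B(\bm{x}^k)\|_{\mathcal E}$, since the metric is the ambient one), \eqref{eqn:armijo sufficient decrease} gives
\[
J^B(\bm{x}^k)-J^B(\bm{x}^{k+1})\ \ge\ cK_a\min\Bigl\{\bar\alpha_\ast,\ \rho\Delta_{RL},\ \tfrac{2\rho(1-c)K_a}{\beta_{RL}}\,g_k\Bigr\}\,g_k=:\phi(g_k),
\]
where $\phi:[0,\infty)\to[0,\infty)$ is a fixed, nondecreasing function with $\phi(s)>0$ for $s>0$.

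Next I sum this inequality from $k=0$ to $N$:
\[
\sum_{k=0}^N\phi(g_k)\le J^B(\bm{x}^0)-J^B(\bm{x}^{N+1})\le J^B(\bm{x}^0)-\inf_{\mathcal{M}^B}J .
\]
The right-hand side is finite because $J$ is bounded from below. Hence $\sum_k\phi(g_k)<\infty$, and in particular $\phi(g_k)\to0$.

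It remains to show that $\phi(g_k)\to0$ forces $g_k\to0$. Suppose not. Then there are an $\varepsilon>0$ and a subsequence with $g_{k_j}\ge\varepsilon$. By monotonicity, $\phi(g_{k_j})\ge\phi(\varepsilon)=cK_a\min\{\bar\alpha_\ast,\rho\Delta_{RL},2\rho(1-c)K_a\varepsilon/\beta_{RL}\}\,\varepsilon>0$, which contradicts $\phi(g_k)\to0$. Alternatively, one can split the index set into the $k$ where the minimum is attained by a constant and the $k$ where it is attained by the $g_k$-term. On the first set $g_k\le \phi(g_k)/(cK_a\min\{\bar\alpha_\ast,\rho\Delta_{RL}\})$, and on the second $g_k^2\le \beta_{RL}\phi(g_k)/(2c\rho(1-c)K_a^2)$. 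In both cases $g_k\to0$.

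There is no real obstacle, because the work is already done in Lemma~\ref{lem:armijo sufficient decrease} and Theorem~\ref{thm:radially lipschitz}. The one point that needs care is that the constants in the decrease bound are uniform in $k$. This holds because every iterate lies in the fixed bounded set $X$, so Section~\ref{sec:properties} applies with a single $\Delta_{RL},\beta_{RL}$, and because $K_a$ is uniform by Assumption~\ref{asmp:dk DJBxk angle}, which Remark~\ref{rem:choose of dk} verifies with $K_a=1/K_B^2$. If the algorithm stops at a stationary point after finitely many steps, the claim is trivial.
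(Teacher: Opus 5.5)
Your proposal is correct and follows essentially the same route as the paper. Both arguments sum the sufficient-decrease bound of Lemma~\ref{lem:armijo sufficient decrease} and use the lower boundedness of $J^B$ to get a convergent series whose terms vanish. Both then conclude because the minimum factor stays bounded away from zero on any subsequence with $\|\grad J^B(\bm{x}^k)\|_{\bm{x}^k}\ge\varepsilon$; your monotone function $\phi$ simply spells out this last step, which the paper states in one line.
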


\begin{proof}
By Lemma~\ref{lem:armijo sufficient decrease} and the lower boundedness of
\(J^B\), the nonnegative series
\[
    \sum_{k=0}^{\infty}
    \min\left\{
        \bar{\alpha}_k,\rho\Delta_{RL},
        \frac{2\rho(1-c)}{\beta_{RL}}K_a
        \|\grad J^B(\bm{x}^k)\|_{\bm{x}^k}
    \right\}
    \|\grad J^B(\bm{x}^k)\|_{\bm{x}^k}
\]
is convergent, and hence its general term tends to zero. Since
\(\bar{\alpha}_k=\min\{\bar{\alpha},\Delta_{RL}\}>0\), the minimum factor is
uniformly bounded away from zero on every subsequence where
\(\|\grad J^B(\bm{x}^k)\|_{\bm{x}^k}\ge\epsilon>0\); therefore no such subsequence exists, which proves that
\(\|\grad J^B(\bm{x}^k)\|_{\bm{x}^k}\to0\). Since
\(\|\grad J^B(\bm{x}^k)\|_{\bm{x}^k}
=
\|\grad J^B(\bm{x}^k)\|_{\mathcal E}\), the asserted convergence follows.
\end{proof}

\section{Foundations of Algorithmic Design II: Riemannian Trust-Region Methods}\label{sec:algorithmTR}

\noindent We now focus on the Riemannian trust-region method to exemplify the algorithmic design grounded in the theoretical foundations established in Sections \ref{sec:geometry of MB}--\ref{sec:projection-induced gradient}. We present the detailed algorithm followed by a rigorous convergence analysis.

\subsection{Algorithm Design}\label{subsec:algorithmTRdesign}

\noindent At each iterate \(\bm{x}^k\), the trust-region method constructs, on the tangent space
\(\T_{\bm{x}^k}\mathcal M^B\), a local quadratic model $m_k:\T_{\bm{x}^k}\mathcal M^B\to\mathbb R$ defined by
\begin{equation}\label{eqn:qudratic model mk} 
m_k(\bm{p})
:= J^B_{\bm{x}^k}(\bm{0}_{\mathcal{E}})
+ \left(\grad J^B_{\bm{x}^k}(\bm{0}_{\mathcal{E}}),\bm{p}\right)_{\bm{x}^k}
+ \frac{1}{2}\left(\hess J^B_{\bm{x}^k}(\bm{0}_{\mathcal{E}})[\bm{p}],\bm{p}\right)_{\bm{x}^k}\qquad
\bm p\in\T_{\bm{x}^k}\mathcal M^B .
\end{equation}
The model \eqref{eqn:qudratic model mk} approximates $J^B_{\bm{x}^k}$ to second-order, with error $o(\|\bm{p}\|_{\bm{x}^k}^2)$ as $\bm{p}\to \bm{0}_{\mathcal{E}}$. Moreover, by the definition of $J^B_{\bm{x}^k}$ in \eqref{eqn:JxB} and the equation \eqref{eqn:first-order derivative of JB and JxB} in Theorem~\ref{thm: taylor}, \eqref{eqn:qudratic model mk} can be written equivalently as

$$
m_k(\bm{p})
= J^B_{\bm{x}^k}(\bm{0}_{\mathcal{E}})
+ \left(\grad J^B(\bm{x}^k),\bm{p}\right)_{\bm{x}^k}
+ \frac{1}{2}\left(\hess J^B_{\bm{x}^k}(\bm{0}_{\mathcal{E}})[\bm{p}],\bm{p}\right)_{\bm{x}^k}.
$$
On a nonterminated iteration, the step \( \Tp^k \) is then computed by solving the trust-region subproblem
\begin{equation}\label{TR}
	\begin{aligned}
		\min_{\Tp\in \T_{{\bm{x}}^k}\mathcal{M}^B}\quad &m_k(\bm{p})=J^B_{\bm{x}^k}(\bm{0}_{\mathcal{E}})+\left( \grad J^B(\bm{x}^k),\Tp\right)_{\bm{x}^k}+\frac{1}{2} \left(\hess J^B_{\bm{x}^k}(\bm{0}_{\mathcal{E}}) [\Tp],\Tp \right)_{\bm{x}^k}\\
		\text{s.t.}\quad&\|\Tp\|_{\bm{x}^k} \le \Delta^k.\\
	\end{aligned}
\end{equation}
The solution \( \Tp^k \) is retracted back to $\mathcal{M}^B$ to obtain a new candidate point, ensuring feasibility while maintaining the algorithm's convergence properties.

\noindent This results in the Riemannian trust-region method for solving problem \eqref{J^B}, as formalized in Algorithm \ref{alg:trust-region}.

\begin{algorithm}[H]
	\caption{A Riemannian Trust-Region Method for  (\ref{J^B})}
	\label{alg:trust-region}
	\begin{algorithmic}[1]
		\State Given $\bar{\Delta}\in(0,\Delta_{RL}]$, $\bm{x}^0\in \mathcal{M}^B$, $\Delta^0\in (0, \bar{\Delta}]$, and $\tilde{r}\in [0,\frac{1}{4}).$
		\While {not converged}
		\State Obtain $\Tp^k$ by approximately solving \eqref{TR};
		\State Compute tentative next iterate $\bm{x}_{+}^k=\bm{R}^B_{\bm{x}^k}(\Tp^k)$ by finding $\delta\in\mathcal{Y}$ such that 
		\begin{equation}\label{retraction newton step}
			B\bm{R}_{\bm{x}^k}\left({\bm{p}^k-\bm{P}_{\bm{x}^k}B^* \delta }\right)=c.
		\end{equation}
		\State Evaluate 
        \begin{equation}\label{eqn:rhok}
            \rho_k:=\frac{J^B_{\bm{x}^k}(\bm{0}_{\mathcal{E}})-J^B_{\bm{x}^k}(\bm{p}^k)}{m_k(\bm{0}_{\mathcal{E}})-m_k(\bm{p}^k)}=\frac{J^B(\bm{x}^k)-J^B(\bm{x}^k_+)}{m_k(\bm{0}_{\mathcal{E}})-m_k(\bm{p}^k)}.
        \end{equation}
		\If{$\rho_k<\frac{1}{4}$}
		\State $\Delta^{k+1}=\frac{1}{4}\Delta^k$;
		\Else
		\If{$\rho_k>\frac{3}{4}$ and $\|\Tp^k\|_{\bm{x}^k}=\Delta^k$}
		\State $\Delta^{k+1}=\min(2\Delta^k,\bar{\Delta} )$;
		\Else
		\State $\Delta^{k+1}=\Delta^{k}$;
		\EndIf
		\EndIf
		\If{$\rho_k>\tilde{r}$}
		\State $\bm{x}^{k+1}=\bm{x}_{+}^k$;
		\Else 
		\State $\bm{x}^{k+1}=\bm{x}^k$;
		\EndIf
		\State Update the iteration index: \( k = k + 1 \);
		\EndWhile
	\end{algorithmic}
\end{algorithm}

\medskip
\noindent\textbf{Termination criterion.} As in the line-search method, we use the same computable stopping criterion based on the projection-induced gradient norm
\begin{equation}\label{eqn:termination tr}
\eta_k
:=
\|\bm G_{\bm{x}^k}\|_{\widetilde g_{\bm{x}^k}}=
\|(\bm P_{\bm{x}^k}^B)^*\grad J(\bm{x}^k)\|_{\mathcal E}<\mathrm{tol}_1.
\end{equation}
By Corollary~\ref{cor:uniform norm equivalences}, $\eta_k$ is uniformly
equivalent, on bounded sets, to \(\|\grad J^B(\bm{x}^k)\|_{\bm{x}^k}\).

\medskip
\noindent\textbf{CG--Steihaug method for subproblem \eqref{TR}.} The trust-region subproblem \eqref{TR} is solved approximately by a CG--Steihaug method \cite{steihaug1983conjugate}. All inner products and norms in the iteration are taken with respect to \((\cdot,\cdot)_{\bm{x}^k}\), and the Hessian action is the one appearing in the quadratic model \eqref{TR}. The resulting method is summarized in Algorithm~\ref{alg:CG--Steihaug}.

\begin{algorithm}[H]
	\caption{The CG--Steihaug Method for \eqref{TR}}
	\label{alg:CG--Steihaug}
	\begin{algorithmic}[1]
			\State \textbf{Input:} Tolerance $\mathrm{tol}_1,\mathrm{tol}_2 > 0$, initial point $\bm{p}_0 = \bm{0}_{\mathcal{E}}$.
			\State Set $\bm{r}_0=\grad J^B(\bm{x}^k)$ and $\bm{d}_0 = -\bm{r}_0$.
            \If{$\|\bm{r}_0\|_{\bm{x}^k} < \mathrm{tol}_1$}
					\State Return $\bm{p} = \bm{p}_0$
				\EndIf
			\While {not converged}
					\If{$\left(\hess J^B_{\bm{x}^k}(\bm{0}_{\mathcal{E}})[\bm{d}_j], \bm{d}_j\right)_{\bm{x}^k} \le 0$}
						\State Find $\tau$ such that $\bm{p} = \bm{p}_j + \tau \bm{d}_j$ minimizes \eqref{TR} and satisfies $\|\bm{p}\|_{\bm{x}^k} = \Delta^k$
						\State Return $\bm{p}$
					\Else
						\State Compute $\alpha_j = \frac{(\bm{r}_j, \bm{r}_j)_{\bm{x}^k}}{\left(\hess J^B_{\bm{x}^k}(\bm{0}_{\mathcal{E}})[\bm{d}_j], \bm{d}_j\right)_{\bm{x}^k}}$
						\State Update $\bm{p}_{j+1} = \bm{p}_j + \alpha_j \bm{d}_j$
						\If{$\|\bm{p}_{j+1}\|_{\bm{x}^k} \ge \Delta^k$}
							\State Find $\tau \ge 0$ such that $\bm{p} = \bm{p}_j + \tau \bm{d}_j$ satisfies $\|\bm{p}\|_{\bm{x}^k} = \Delta^k$
							\State Return $\bm{p}$
						\EndIf
						\State Update $\bm{r}_{j+1}=\bm{r}_{j}+\alpha_j\,\hess J^B_{\bm{x}^k}(\bm{0}_{\mathcal{E}})[\bm{d}_j]$ 
						\If{$\|\bm{r}_{j+1}\|_{\bm{x}^k} < \mathrm{tol}_2 \cdot\|\bm{r}_0\|_{\bm{x}^k}$}
							\State Return $\bm{p} = \bm{p}_{j+1}$
						\EndIf
						\State Compute $\beta_{j+1} = \frac{(\bm{r}_{j+1},\bm{r}_{j+1})_{\bm{x}^k}}{(\bm{r}_j, \bm{r}_j)_{\bm{x}^k}}$
						\State Update $\bm{d}_{j+1} = -\bm{r}_{j+1} + \beta_{j+1} \bm{d}_j$
					\EndIf
					\State Update the iteration index: $ j = j + 1 $
			\EndWhile
		\end{algorithmic}
\end{algorithm}

\begin{remark}
    Algorithm~\ref{alg:CG--Steihaug} is formulated intrinsically on the tangent space \(\mathrm{T}_{\bm{x}^k}\mathcal{M}^B\). All residuals \(\bm{r}_j\) lie in \(\mathrm{T}_{\bm{x}^k}\mathcal{M}^B\). Moreover, the
    search directions are defined recursively by \(\bm{d}_{j+1}=-\bm{r}_{j+1}+\beta_{j+1}\bm{d}_j\), with \(\bm{d}_0 \in \mathrm{T}_{\bm{x}^k}\mathcal{M}^B\). Since \(\mathrm{T}_{\bm{x}^k}\mathcal{M}^B\) is a vector space, it is closed under linear combinations. Hence, by induction, all search directions \(\bm{d}_j\) remain in \(\mathrm{T}_{\bm{x}^k}\mathcal{M}^B\). Consequently, the iterates \(\bm{p}_j\) and the returned vector \(\bm{p}\), which are constructed along these directions, also belong to \(\mathrm{T}_{\bm{x}^k}\mathcal{M}^B\).

 \noindent  Compared with standard conjugate gradient methods
    \cite{nocedal2006numerical}, the CG--Steihaug method incorporates the trust-region constraint through the initial stopping check and two early termination mechanisms. The
    iteration stops if the current search direction \(\bm{d}_j\) has non-positive curvature with respect to \(\hess J^B_{\bm{x}^k}(\bm{0}_{\mathcal{E}})\), or if the next iterate \(\bm{p}_{j+1}\) would leave the trust region. In either case, the final step is obtained by intersecting the current search path with the trust-region boundary.
\end{remark}

\begin{remark}\label{rem:cauchy decrease of tcg}
    It is established that the Cauchy point, as defined in
    \cite[(7.8) on p.~142]{absil2008optimization}, fulfills the Cauchy decrease condition
    \begin{equation}\label{Cauchy decrease inequality}
        {m}_{k}(\bm{0}_{\mathcal{E}})-{m}_{k}(\bm{p}^k)\ge c_1\norm{\grad J^B(\bm{x}^k)}_{\mE}\min\left(\Delta^k,\frac{\norm{\grad J^B(\bm{x}^k)}_{\mathcal{E}}}{\norm{\hess J^B_{\bm{x}^k}(\bm{0}_{\mathcal{E}})}}\right),
    \end{equation}
    with \(c_1=\frac{1}{2}\). Provided the initial residual test is not triggered, the tangent vector \(\bm{p}^k\) generated by the CG--Steihaug method satisfies this inequality. This follows from the fact that the CG--Steihaug method first reaches the Cauchy point and then proceeds, when possible, to further improve the model decrease \cite[Proposition 7.3.2]{absil2008optimization}.
\end{remark}

\begin{remark}
    Although Algorithm~\ref{alg:CG--Steihaug} has the same algebraic structure as the classical CG--Steihaug method, its implementation is not straightforward in the present setting. Although the projection \(\bm P_{\bm{x}^k}^B\) onto \(\T_{\bm{x}^k}\mathcal M^B\) is available, it is generally not orthogonal with respect to \((\cdot,\cdot)_{\bm{x}^k}\). Hence, it does not directly provide the Riesz representatives of the tangent residuals $\bm{r}_j$ appearing in the intrinsic CG iteration. These residuals therefore have to be computed through weak characterizations. More precisely, by applying \eqref{eqn:first-order derivative of JB and JxB} in Theorem~\ref{thm: taylor}, the initialization \(\bm r_0=\grad J^B(\bm{x}^k)\) amounts to finding \(\bm r_0\in\T_{\bm{x}^k}\mathcal M^B\) such that 
    \begin{equation}\label{eqn:linear system for gradient} 
    \left(\bm r_0,\bm P_{\bm{x}^k}^B\bm v\right)_{\mathcal E} = \left(\grad J(\bm{x}^k),\bm P_{\bm{x}^k}^B\bm v\right)_{\mathcal E}, \qquad \forall\,\bm v\in\mathcal E . 
    \end{equation} 
    Similarly, using \eqref{eqn:second-order directional derivative of JxB} in Theorem~\ref{thm: taylor}, the residual update \[ \bm r_{j+1} = \bm r_j + \alpha_j \hess J_{\bm{x}^k}^B(\bm 0_{\mathcal E})[\bm d_j] \] requires finding \(\bm r_{j+1}\in\T_{\bm{x}^k}\mathcal M^B\) satisfying
    \begin{equation}\label{eqn:linear system for hessian} 
    \left(\bm r_{j+1},\bm P_{\bm{x}^k}^B\bm v\right)_{\mathcal E} = \left(\bm r_j,\bm P_{\bm{x}^k}^B\bm v\right)_{\mathcal E} + \alpha_j \left( \left(\hess J(\bm{x}^k)+\bm\pi_{\bm{x}^k}\right)[\bm d_j], \bm P_{\bm{x}^k}^B\bm v \right)_{\mathcal E}, \qquad \forall\,\bm v\in\mathcal E . 
    \end{equation} 
    Thus, the realization of Algorithm~\ref{alg:CG--Steihaug} requires solving a tangent-space Riesz representation problem at initialization and after every Hessian action. This repeated solution may become a computational bottleneck, especially when the inner CG iteration takes many steps. This motivates the projection-induced strategy introduced below, which avoids these residual solves while retaining the essential CG--Steihaug structure.
\end{remark}

\medskip
\noindent\textbf{A projection-induced realization of Algorithm~\ref{alg:CG--Steihaug}.}

\noindent As discussed above, the realization of Algorithm~\ref{alg:CG--Steihaug} requires a tangent-space Riesz representation solve at every inner iteration. We therefore develop a projection-induced CG--Steihaug method for the same quadratic model $m_k$, in which the Riesz representatives are available in closed form through the projection-induced metric \(\widetilde g_{\bm{x}^k}\) of Proposition~\ref{prop:projection induced metric gradient}. Since the inner iteration is carried out in a different metric, the trial steps and search directions generated below need not coincide with those of Algorithm~\ref{alg:CG--Steihaug}; only the quadratic model and the trust-region constraint are retained.

\noindent For a fixed outer iterate \(\bm{x}^k\), set
\begin{equation}\label{eqn:H^k}
    H^k:=\hess J(\bm{x}^k)+\bm\pi_{\bm{x}^k}.
\end{equation}
By Theorem~\ref{thm: taylor}, the quadratic part of the model $m_k$ is \(\left(H^k\bm p,\bm q\right)_{\mathcal E}\) for \(\bm p,\bm q\in\T_{\bm{x}^k}\mathcal M^B\). Recall from Proposition~\ref{prop:projection induced metric gradient} that \(A_{\bm{x}^k} = \bm P_{\bm{x}^k}^B \left(\bm P_{\bm{x}^k}^B\right)^* \big|_{\T_{\bm{x}^k}\mathcal M^B}\) and \(K_{\bm{x}^k}=A_{\bm{x}^k}^{-1}\) are self-adjoint and positive definite on \(\T_{\bm{x}^k}\mathcal M^B\), and that the projection-induced metric of \eqref{eqn:projection induced metric} is \(\widetilde g_{\bm{x}^k}(\bm u,\bm v)=\left(K_{\bm{x}^k}\bm u,\bm v\right)_{\mathcal E}\). The entire construction rests on the following identity: for every \(\bm w\in\mathcal E\) and every \(\bm z\in\T_{\bm{x}^k}\mathcal M^B\),
\begin{equation}\label{eqn:riesz identity}
\widetilde g_{\bm{x}^k}\!\left(\bm P_{\bm{x}^k}^B(\bm P_{\bm{x}^k}^B)^*\bm w,\bm z\right)
=\left(\bm P_{\bm{x}^k}^B(\bm P_{\bm{x}^k}^B)^*\bm w,K_{\bm{x}^k}\bm z\right)_{\mathcal E}
=\left(\bm w,A_{\bm{x}^k}K_{\bm{x}^k}\bm z\right)_{\mathcal E}
=\left(\bm w,\bm z\right)_{\mathcal E},
\end{equation}
where we used, in turn, the self-adjointness of \(K_{\bm{x}^k}\) on \(\T_{\bm{x}^k}\mathcal M^B\), the self-adjointness of \(\bm P_{\bm{x}^k}^B(\bm P_{\bm{x}^k}^B)^*\) on \(\mathcal E\) together with \(K_{\bm{x}^k}\bm z\in\T_{\bm{x}^k}\mathcal M^B\), and \(A_{\bm{x}^k}K_{\bm{x}^k}=\mathrm{Id}\) on \(\T_{\bm{x}^k}\mathcal M^B\). In words, \(\bm P_{\bm{x}^k}^B(\bm P_{\bm{x}^k}^B)^*\bm w\) is the Riesz representative, with respect to \(\widetilde g_{\bm{x}^k}\), of the functional \(\bm z\mapsto(\bm w,\bm z)_{\mathcal E}\) on \(\T_{\bm{x}^k}\mathcal M^B\), and it is obtained by a single application of \(\bm P_{\bm{x}^k}^B(\bm P_{\bm{x}^k}^B)^*\) rather than by a linear solve. Applying \eqref{eqn:riesz identity} with \(\bm w=\grad J(\bm{x}^k)\) and \(\bm w=H^k\bm p\) gives, for \(\bm p\in\T_{\bm{x}^k}\mathcal M^B\), we have
\begin{equation}\label{eqn:model ambient representation}
\begin{aligned}
    m_k(\bm p) &= J^B_{\bm{x}^k}(\bm{0}_{\mathcal{E}}) + \left(\grad J(\bm{x}^k),\bm p\right)_{\mathcal E} + \frac12\left(H^k\bm p,\bm p\right)_{\mathcal E}\\
    &= J^B_{\bm{x}^k}(\bm{0}_{\mathcal{E}})+\widetilde g_{\bm{x}^k}\!\left(\bm P_{\bm{x}^k}^B(\bm P_{\bm{x}^k}^B)^*\grad J(\bm{x}^k),\bm p\right)+\frac12\,\widetilde g_{\bm{x}^k}\!\left(\bm P_{\bm{x}^k}^B(\bm P_{\bm{x}^k}^B)^*H^k\bm p,\bm p\right).
\end{aligned}
\end{equation}

\noindent We now derive a projection-induced CG--Steihaug method. The CG recursion is associated with the gradient and Hessian of \(m_k\) with respect to the Hilbert-space metric \(\widetilde g_{\bm{x}^k}\), while the original trust-region constraint in \eqref{TR} is retained. Throughout the remainder of this subsection, \(\bm p_j\) and \(\bm d_j\) denote its trial steps and search directions, rather than those of Algorithm~\ref{alg:CG--Steihaug}. By
\eqref{eqn:model ambient representation}, the gradient and Hessian of \(m_k\) with
respect to \(\widetilde g_{\bm{x}^k}\) are respectively given by
\[
\bm G_{\bm{x}^k}=\bm P_{\bm{x}^k}^B\left(\bm P_{\bm{x}^k}^B\right)^*\grad J(\bm{x}^k)
\quad\hbox{and} \quad
\mathcal H_k:=\bm P_{\bm{x}^k}^B\left(\bm P_{\bm{x}^k}^B\right)^*H^k\big|_{\T_{\bm{x}^k}\mathcal M^B}.
\]
Since \(H^k\) is self-adjoint with respect to \(\left(\cdot,\cdot\right)_{\mathcal E}\),
the identity \eqref{eqn:riesz identity} gives
\[
\widetilde g_{\bm{x}^k}(\mathcal H_k\bm u,\bm v)=(H^k\bm u,\bm v)_{\mathcal E}=(\bm u,H^k\bm v)_{\mathcal E}
=\widetilde g_{\bm{x}^k}(\bm u,\mathcal H_k\bm v), \quad \mbox{ for } \bm u,\bm v\in\T_{\bm{x}^k}\mathcal M^B,
\] 
and so \(\mathcal H_k\) is self-adjoint on \((\T_{\bm{x}^k}\mathcal M^B,\widetilde g_{\bm{x}^k})\). The CG--Steihaug recursion in this space, initialized with \(\bm p_0=\bm 0_{\mathcal E}\), \(\bm y_0=\bm G_{\bm{x}^k}\), and \(\bm d_0=-\bm y_0\), reads
\[
\bm p_{j+1}=\bm p_j+\alpha_j\bm d_j,
\qquad
\bm y_{j+1}=\bm y_j+\alpha_j\,\bm P_{\bm{x}^k}^B\left(\bm P_{\bm{x}^k}^B\right)^*H^k\bm d_j,
\qquad
\bm d_{j+1}=-\bm y_{j+1}+\beta_{j+1}\bm d_j,
\]
with the usual coefficients
\begin{equation}\label{eqn:alpha beta metric form}
\alpha_j
=\frac{\|\bm y_j\|_{\widetilde g_{\bm{x}^k}}^2}
{\widetilde g_{\bm{x}^k}\!\left(\bm P_{\bm{x}^k}^B(\bm P_{\bm{x}^k}^B)^*H^k\bm d_j,\bm d_j\right)},
\qquad
\beta_{j+1}
=\frac{\|\bm y_{j+1}\|_{\widetilde g_{\bm{x}^k}}^2}{\|\bm y_j\|_{\widetilde g_{\bm{x}^k}}^2}.
\end{equation}
Here, \(\bm y_j\) is the residual of the iteration: an induction on the recursion gives
\(\bm y_j=\bm G_{\bm{x}^k}+\mathcal H_k\bm p_j\), and therefore, by
\eqref{eqn:riesz identity}, we have
\[
\widetilde g_{\bm{x}^k}(\bm y_j,\bm z)
=\left(\grad J(\bm{x}^k)+H^k\bm p_j,\bm z\right)_{\mathcal E}
=Dm_k(\bm p_j)[\bm z]
\qquad\text{for all }\bm z\in\T_{\bm{x}^k}\mathcal M^B.
\]
That is, \(\bm y_j\) is the Riesz representative of \(Dm_k(\bm p_j)\) with respect to \(\widetilde g_{\bm{x}^k}\), and it is updated by a single application of \(\bm P_{\bm{x}^k}^B(\bm P_{\bm{x}^k}^B)^*\) per iteration. Since \(\mathcal H_k\) is self-adjoint in this metric, the iteration inherits the usual conjugacy and descent properties of CG. Equivalently, it can be interpreted as a preconditioned CG realization with inverse preconditioner \(A_{\bm{x}^k}\), or, with preconditioner \(K_{\bm{x}^k}=A_{\bm{x}^k}^{-1}\).

\noindent It remains to evaluate the two inner products in \eqref{eqn:alpha beta metric form} without applying \(K_{\bm{x}^k}=A_{\bm{x}^k}^{-1}\), which would amount to a tangent-space linear solve. For the curvature term, identity \eqref{eqn:riesz identity} with \(\bm w=H^k\bm d_j\) yields
\begin{equation}\label{eqn:projection induced curvature}
\widetilde g_{\bm{x}^k}\!\left(\bm P_{\bm{x}^k}^B(\bm P_{\bm{x}^k}^B)^*H^k\bm d_j,\bm d_j\right)
=(H^k\bm d_j,\bm d_j)_{\mathcal E},
\end{equation}
so the curvature test coincides with its ambient counterpart. To compute the residual norm $\|\bm y_j\|_{\widetilde g_{\bm{x}^k}}$, we introduce
\[
\bm n_0:=\grad J(\bm{x}^k),
\qquad
\bm n_{j+1}:=\bm n_j+\alpha_jH^k\bm d_j.
\]
An induction gives 
\begin{equation}\label{eqn:nj direct}
    \bm n_j=\grad J(\bm{x}^k)+H^k\bm p_j,
\end{equation}
i.e., \(\bm n_j\) is the ambient representation of the model derivative, \(\left(\bm n_j,\bm z\right)_{\mathcal E}=Dm_k(\bm p_j)[\bm z]\) for \(\bm z\in\T_{\bm{x}^k}\mathcal M^B\), and consequently
\[
\bm y_j
=\bm G_{\bm{x}^k}+\bm P_{\bm{x}^k}^B\left(\bm P_{\bm{x}^k}^B\right)^*H^k\bm p_j
=\bm P_{\bm{x}^k}^B\left(\bm P_{\bm{x}^k}^B\right)^*\bm n_j .
\]
Applying \eqref{eqn:riesz identity} with \(\bm w=\bm n_j\) and \(\bm z=\bm y_j\) then
gives
\begin{equation}\label{eqn:projection induced residual norm}
    \|\bm y_j\|_{\widetilde g_{\bm{x}^k}}^2
    =\widetilde g_{\bm{x}^k}\!\left(\bm P_{\bm{x}^k}^B(\bm P_{\bm{x}^k}^B)^*\bm n_j,\bm y_j\right)
    =\left(\bm n_j,\bm y_j\right)_{\mathcal E}.
\end{equation}
Substituting \eqref{eqn:projection induced curvature} and \eqref{eqn:projection induced residual norm} into \eqref{eqn:alpha beta metric form}, the
coefficients take the computable form
\[
\alpha_j=\frac{(\bm n_j,\bm y_j)_{\mathcal E}}{(H^k\bm d_j,\bm d_j)_{\mathcal E}},
\qquad
\beta_{j+1}=\frac{(\bm n_{j+1},\bm y_{j+1})_{\mathcal E}}{(\bm n_j,\bm y_j)_{\mathcal E}},
\]
and, by \eqref{eqn:projection induced residual norm}, the quantity
\(\sqrt{(\bm n_j,\bm y_j)_{\mathcal E}}\) used in the stopping tests below is exactly the
residual norm \(\|\bm y_j\|_{\widetilde g_{\bm{x}^k}}\). Every step of the iteration thus
requires only applications of \(\bm P_{\bm{x}^k}^B\),
\(\left(\bm P_{\bm{x}^k}^B\right)^*\), and \(H^k\). These identities give the realization
summarized next.

\noindent For clarity, the following table compares the algebraic structures of the two realizations. The quantities in its two columns belong to two independently generated sequences; the comparison does not assert equality of their trial steps, residuals, or search directions at the same index.
\begin{center}
\small
\setlength{\tabcolsep}{3pt}
\renewcommand{\arraystretch}{2.2}
\begin{tabularx}{\textwidth}{
>{\centering\arraybackslash}p{0.16\textwidth}|
>{\centering\arraybackslash}X|
>{\centering\arraybackslash}X
}
\hline
Quantity
&
Ambient-metric realization
&
Projection-induced realization
\\
\hline

\makecell{Residual\\ representation}
&
\makecell[c]{
With \(\bm n_j^{\rm am}:=\grad J(\bm x^k)+H^k\bm p_j^{\rm am}\),\\[2pt]
solve for \(\bm r_j^{\rm am}\in\T_{\bm{x}^k}\mathcal M^B\):\\[2pt]
\(\left(\bm r_j^{\rm am},\bm z\right)_{\mathcal E}
=Dm_k(\bm p_j^{\rm am})[\bm z]
=\left(\bm n_j^{\rm am},\bm z\right)_{\mathcal E}\)
}
&
\makecell[c]{
Compute\\[2pt]
\(\begin{aligned}
\bm y_j
&=
\bm P_{\bm{x}^k}^B
\left(\bm P_{\bm{x}^k}^B\right)^*
\bm n_j
\end{aligned}\)
}
\\
\hline

Residual norm
&
\(\|\bm r_j^{\rm am}\|_{\mathcal E}^2\)
&
\makecell[c]{
\(\begin{aligned}
\|\bm y_j\|_{\widetilde g_{\bm{x}^k}}^2
&=
(\bm n_j,\bm y_j)_{\mathcal E}
\end{aligned}\)
}
\\
\hline

Initial residual
&
\(\bm r_0^{\rm am}=\grad J^B(\bm{x}^k)\)
&
\makecell[c]{
\(\begin{aligned}
\bm n_0&=\grad J(\bm{x}^k),\\
\bm y_0
&=
\bm P_{\bm{x}^k}^B
\left(\bm P_{\bm{x}^k}^B\right)^*
\bm n_0
\end{aligned}\)
}
\\
\hline

Search direction
&
\(\bm d_j^{\rm am}=-\bm r_j^{\rm am}+\beta_j^{\rm am}\bm d_{j-1}^{\rm am}\)
&
\(\bm d_j=-\bm y_j+\beta_j\bm d_{j-1}\)
\\
\hline

Step size
&
\makecell[c]{
\(\begin{aligned}
\alpha_j^{\rm am}
&=
\frac{(\bm r_j^{\rm am},\bm r_j^{\rm am})_{\mathcal E}}
{(H^k\bm d_j^{\rm am},\bm d_j^{\rm am})_{\mathcal E}}
\end{aligned}\)
}
&
\makecell[c]{
\(\begin{aligned}
\alpha_j
&=
\frac{(\bm n_j,\bm y_j)_{\mathcal E}}
{(H^k\bm d_j,\bm d_j)_{\mathcal E}}
\end{aligned}\)
}
\\
\hline

CG coefficient
&
\makecell[c]{
\(\begin{aligned}
\beta_{j+1}^{\rm am}
&=
\frac{\|\bm r_{j+1}^{\rm am}\|_{\mathcal E}^2}
{\|\bm r_j^{\rm am}\|_{\mathcal E}^2}
\end{aligned}\)
}
&
\makecell[c]{
\(\begin{aligned}
\beta_{j+1}
&=
\frac{
(\bm n_{j+1},\bm y_{j+1})_{\mathcal E}
}{
(\bm n_j,\bm y_j)_{\mathcal E}
}
\end{aligned}\)
}
\\
\hline
\end{tabularx}
\end{center}

\noindent The trust-region constraint is kept in the norm \(\norm{\cdot}_{\bm{x}^k}\) of \eqref{TR}, unchanged from Algorithm~\ref{alg:CG--Steihaug}. This choice is deliberate. First, the subproblem \eqref{TR}---including the model \(m_k\), the radius \(\Delta^k\), and the constraint norm---is unchanged from that used by the outer trust-region framework. Hence, the acceptance test and the radius-update rule remain unchanged, and the existing outer convergence analysis applies once the Cauchy-decrease condition established below is invoked. Second, measuring the constraint in \(\|\cdot\|_{\widetilde g_{\bm{x}^k}}\) would require applications of \(K_{\bm{x}^k}=A_{\bm{x}^k}^{-1}\), that is, tangent-space linear solves, which is precisely the cost the projection-induced realization is designed to avoid. Finally, by Proposition~\ref{prop:projection induced metric gradient} the two norms are uniformly equivalent on bounded sets, so this choice is compatible with the Cauchy-decrease analysis given later. The resulting method is summarized in Algorithm~\ref{alg:projection-induced-CG--Steihaug}.

\begin{algorithm}[H]
	\caption{A Projection-Induced CG--Steihaug Algorithm for \eqref{TR}}
	\label{alg:projection-induced-CG--Steihaug}
	\begin{algorithmic}[1]
		\State \textbf{Input:} Tolerance $\mathrm{tol}_1,\mathrm{tol}_2 > 0$, initial point $\bm{p}_0 = \bm{0}_{\mathcal{E}}$.
		\State Set $j=0$ and $\bm{n}_0=\grad J(\bm{x}^k)$. Compute $\bm{y}_0=\bm{P}_{\bm{x}^k}^B\left(\bm{P}_{\bm{x}^k}^B\right)^*\bm{n}_0$, and set $\bm{d}_0=-\bm{y}_0$.
        \If{$\sqrt{(\bm n_0,\bm y_0)_{\mathcal E}} < \mathrm{tol}_1$}
		    \State Return $\bm{p} = \bm{p}_0$
		\EndIf
		\While {not converged}
		\If{$(H^k \bm{d}_j, \bm{d}_j)_{\mathcal{E}} \le 0$}
		    \State Find $\tau$ such that $\bm{p} = \bm{p}_j + \tau\bm{d}_j$ minimizes \eqref{TR} and satisfies $\norm{\bm{p}}_{\bm{x}^k} = \Delta^k$
		    \State Return $\bm{p}$
		\Else
		    \State Compute $\alpha_j = \frac{(\bm{n}_j,\bm{y}_j)_{\mathcal{E}}}{(H^k \bm{d}_j, \bm{d}_j)_{\mathcal{E}}}$
		    \State Update $\bm{p}_{j+1} = \bm{p}_j + \alpha_j \bm{d}_j$
		    \If{$\norm{\bm{p}_{j+1}}_{\bm{x}^k} \ge \Delta^k$}
		        \State Find $\tau \ge 0$ such that $\bm{p} = \bm{p}_j + \tau \bm{d}_j$ satisfies $\norm{\bm{p}}_{\bm{x}^k} = \Delta^k$
		        \State Return $\bm{p}$
		    \EndIf
		    \State Compute $\bm{n}_{j+1}=\bm{n}_j+\alpha_jH^k\bm{d}_j$
		    \State Compute $\bm{y}_{j+1}=\bm{y}_j+\alpha_j\bm{P}_{\bm{x}^k}^B\left(\bm{P}_{\bm{x}^k}^B\right)^*H^k\bm{d}_j$
		    \If{$\sqrt{(\bm n_{j+1},\bm y_{j+1})_{\mathcal E}}\le\mathrm{tol}_2\cdot\sqrt{(\bm n_0,\bm y_0)_{\mathcal E}}$}
		        \State Return $\bm{p} = \bm{p}_{j+1}$
		    \EndIf
		    \State Compute $\beta_{j+1} = \frac{( \bm{n}_{j+1}, \bm{y}_{j+1})_{\mathcal{E}}}{( \bm{n}_{j}, \bm{y}_{j})_{\mathcal{E}}}$
		    \State Update $\bm{d}_{j+1} = -\bm{y}_{j+1} + \beta_{j+1} \bm{d}_j$
		\EndIf
		\State Set $ j = j + 1 $
		\EndWhile
	\end{algorithmic}
\end{algorithm}

\noindent The initial residual test in Algorithm~\ref{alg:projection-induced-CG--Steihaug} coincides with the stopping check above \eqref{eqn:termination tr}, since
\[
\sqrt{(\bm n_0,\bm y_0)_{\mathcal E}}
=
\|(\bm P_{\bm{x}^k}^B)^*\grad J(\bm{x}^k)\|_{\mathcal E}
=
\eta_k .
\]
If this test is triggered, the outer trust-region algorithm terminates. Otherwise, Algorithm~\ref{alg:projection-induced-CG--Steihaug} proceeds with a nonzero initial residual; this is the case considered in the Cauchy-decrease analysis below.

\noindent Two features of Algorithm~\ref{alg:projection-induced-CG--Steihaug} deserve emphasis. First, all iterates and search directions remain in \(\T_{\bm{x}^k}\mathcal M^B\): each \(\bm y_j\) lies in the range of \(\bm P_{\bm{x}^k}^B\), and hence so does every \(\bm d_j\) and \(\bm p_j\). Second, no tangent-space Riesz representation solve is required: each iteration involves only applications of \(\bm P_{\bm{x}^k}^B\), \(\left(\bm P_{\bm{x}^k}^B\right)^*\), and \(H^k\), while the residual norm and the CG coefficients retain their exact variational meaning through the projection-induced metric \(\widetilde g_{\bm{x}^k}\).

\begin{remark}
    The projection-induced realization does not alter the trust-region subproblem~\eqref{TR}: the model \(m_k\), the radius \(\Delta^k\), the constraint norm, and the retraction are those of the original Riemannian trust-region framework. Only the inner solver changes: it generates its own sequence from the projection-induced residual \(\bm y_j=\bm P_{\bm{x}^k}^B\left(\bm P_{\bm{x}^k}^B\right)^*\bm n_j\), whose norm is measured in \(\widetilde g_{\bm{x}^k}\). In general, the trial steps \(\bm p_j\) produced here therefore differ from those of Algorithm~\ref{alg:CG--Steihaug}, and no identification of the two inner sequences is used in the analysis below. Moreover, if the outer iterates remain in a bounded subset \(X\subset\mathcal M^B\), Proposition~\ref{prop:projection induced metric gradient} shows that \(\widetilde g_{\bm{x}^k}\) is uniformly equivalent to the ambient-induced Riemannian metric on \(X\) in the sense of induced norms; this uniform equivalence is the only property of the inner solver invoked in the convergence analysis. Consequently, Algorithm~\ref{alg:projection-induced-CG--Steihaug} requires no modification of the outer trust-region framework.
\end{remark}

\noindent We assume the existence of a bounded set \(X\subset\mathcal{M}^B\subset\mathcal{E}\) containing the sequence \(\{\bm{x}^k\}_{k\ge0}\) generated by Algorithm~\ref{alg:trust-region}. This assumption is verified in Section~\ref{subsec:tr-convergence} by the monotonicity of \(J^B(\bm{x}^k)\) and the coercivity of \(J\).

\begin{proposition}\label{prop:Cauchy decrease}
At iteration \(k\) of Algorithm~\ref{alg:trust-region}, let $\bm{G}_k=\bm{P}_{\bm{x}^k}^B\left(\bm{P}_{\bm{x}^k}^B\right)^*\grad J(\bm{x}^k)$ be the projection-induced gradient at \(\bm{x}^k\). Assume that the stopping test $\eta_k<\mathrm{tol}_1$ in Algorithm~\ref{alg:trust-region} is not triggered.

\noindent The first search direction of Algorithm~\ref{alg:projection-induced-CG--Steihaug} generates the Cauchy step $\bm{p}=-\tau^k\bm{G}_k$, where
\begin{equation}\label{eqn:tauk}
    \tau^k=\left\{\begin{aligned}
        &\frac{\Delta^k}{\norm{\bm{G}_k}_{\mathcal{E}},}&\text{if }\left(H^k[\bm{G}_k],\bm{G}_k\right)_{\mathcal{E}}\le 0,\\
        &\min\left\{\frac{\Delta^k}{\norm{\bm{G}_k}_{\mathcal{E}}},\frac{\norm{\left(\bm{P}_{\bm{x}^k}^B\right)^*\grad J(\bm{x}^k)}_{\mathcal{E}}^2}{\left(H^k\left[\bm{G}_k\right],\bm{G}_k\right)_{\mathcal{E}}}\right\},&\text{otherwise}.
    \end{aligned}
    \right.
\end{equation}
This step satisfies the Cauchy point decrease condition
\begin{align*}
    {m}_{k}(\bm{0}_{\mathcal{E}})-{m}_{k}(\bm{p})&\ge \frac{1}{2K_B^2}\norm{\left(\bm{P}_{\bm{x}^k}^B\right)^*\grad J(\bm{x}^k)}_{\mE}\min\left(\Delta^k,\frac{\norm{\left(\bm{P}_{\bm{x}^k}^B\right)^*\grad J(\bm{x}^k)}_{\mathcal{E}}}{\norm{\hess J^B_{\bm{x}^k}(\bm{0}_{\mathcal{E}})}}\right)\\
    &\ge\frac{1}{2K_B^2}\norm{\grad J^B(\bm{x}^k)}_{\mE}\min\left(\Delta^k,\frac{\norm{\grad J^B(\bm{x}^k)}_{\mathcal{E}}}{\norm{\hess J^B_{\bm{x}^k}(\bm{0}_{\mathcal{E}})}}\right).
\end{align*}
\end{proposition}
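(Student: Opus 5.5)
The plan is to track the first pass through the loop of Algorithm~\ref{alg:projection-induced-CG--Steihaug} explicitly, and then run the classical three-case Cauchy-decrease argument. Throughout, \(\|\cdot\|_{\bm{x}^k}=\|\cdot\|_{\mathcal E}\) on the tangent space. Write \(a:=\|(\bm{P}_{\bm{x}^k}^B)^*\grad J(\bm{x}^k)\|_{\mathcal E}\), which is positive since the stopping test is not triggered.

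\emph{Identifying the step.} Since \(\bm n_0=\grad J(\bm{x}^k)\) and \(\bm y_0=\bm G_k\), we have \(\bm d_0=-\bm G_k\). Moreover
\[
(\bm n_0,\bm y_0)_{\mathcal E}=\bigl((\bm P^B_{\bm{x}^k})^*\grad J(\bm{x}^k),(\bm P^B_{\bm{x}^k})^*\grad J(\bm{x}^k)\bigr)_{\mathcal E}=a^2 .
\]
There are three branches.
\begin{enumerate}[(i)]
\item If \((H^k\bm G_k,\bm G_k)_{\mathcal E}\le0\), the algorithm moves to the boundary, giving \(\tau^k=\Delta^k/\|\bm G_k\|_{\mathcal E}\).
\item Otherwise \(\alpha_0=a^2/(H^k\bm G_k,\bm G_k)_{\mathcal E}\). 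If \(\alpha_0\|\bm G_k\|_{\mathcal E}\ge\Delta^k\), the step is truncated to the boundary, again with \(\tau=\Delta^k/\|\bm G_k\|_{\mathcal E}\).
\item Else \(\bm p_1=-\alpha_0\bm G_k\).
\end{enumerate}
Together these give exactly \eqref{eqn:tauk}. In the interior case, the subsequent CG iterates only decrease \(m_k\) further (standard Steihaug monotonicity along conjugate directions, inherited since \(\mathcal H_k\) is \(\widetilde g\)-self-adjoint). So bounding the decrease at \(-\tau^k\bm G_k\) suffices.

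\emph{Decrease formula and auxiliary bounds.} By Theorem~\ref{thm: taylor} and \eqref{eqn:model ambient representation},
\[
m_k(\bm 0_{\mathcal E})-m_k(-\tau\bm G_k)=\tau(\grad J(\bm{x}^k),\bm G_k)_{\mathcal E}-\tfrac{\tau^2}{2}(H^k\bm G_k,\bm G_k)_{\mathcal E},
\qquad
(\grad J(\bm{x}^k),\bm G_k)_{\mathcal E}=a^2 .
\]
Two bounds will be used. First, \(\|\bm G_k\|_{\mathcal E}\le K_B a\) by Lemma~\ref{lem:PxB upper bound}. Second, \((H^k\bm G_k,\bm G_k)_{\mathcal E}\le h\|\bm G_k\|^2_{\mathcal E}\) with \(h:=\|\hess J^B_{\bm{x}^k}(\bm 0_{\mathcal E})\|\), because the Hessian form coincides with \(H^k\) on the tangent space by \eqref{eqn:second-order direvative of JxB}. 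Note also \(K_B\ge\|\bm P^B_{\bm{x}^k}\|\ge1\), since \(\bm P^B_{\bm{x}^k}\) is a nonzero projection.

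\emph{Case analysis.}
\begin{enumerate}[(i)]
\item Nonpositive curvature: the decrease is at least \(\tau^k a^2=\Delta^k a^2/\|\bm G_k\|_{\mathcal E}\ge \Delta^k a/K_B\).
\item Positive curvature, boundary step: here \(\tau^k\le a^2/(H^k\bm G_k,\bm G_k)_{\mathcal E}\), so the quadratic term is at most half the linear one. The decrease is then at least \(\tfrac12\tau^k a^2\ge\Delta^k a/(2K_B)\).
\item Positive curvature, interior step: the decrease equals \(a^4/\bigl(2(H^k\bm G_k,\bm G_k)_{\mathcal E}\bigr)\ge a^4/(2h\|\bm G_k\|^2_{\mathcal E})\ge a^2/(2K_B^2h)\).
\end{enumerate}
Since \(K_B\ge1\), all three cases are bounded below by \(\frac{1}{2K_B^2}\,a\min(\Delta^k,a/h)\), with \(a/h=+\infty\) if \(h=0\). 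This is the first inequality.

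\emph{Second inequality.} The function \(t\mapsto t\min(\Delta^k,t/h)\) is nondecreasing, and \(a\ge\|\grad J^B(\bm{x}^k)\|_{\mathcal E}\) by \eqref{eqn:Pstar gradient equivalence}, which gives the second bound.

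The main obstacle I expect is bookkeeping rather than analysis. One point is justifying that the boundary/interior branches coincide with the min in \eqref{eqn:tauk}. The other, if one wants the decrease for the returned \(\bm p^k\) rather than the first step, is invoking monotone model decrease of later Steihaug iterates in the \(\widetilde g\)-geometry while the trust-region constraint is still measured in \(\|\cdot\|_{\mathcal E}\).
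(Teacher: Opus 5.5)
Your proposal is correct and follows the paper's proof essentially step for step. You identify $\bm d_0=-\bm G_k$ and the three branches of the first inner iteration, write the decrease as $\tau a^2-\tfrac{\tau^2}{2}(H^k\bm G_k,\bm G_k)_{\mathcal E}$, and bound the three cases using $\|\bm G_k\|_{\mathcal E}\le K_B a$, the identification of $H^k$ with the Hessian form on the tangent space, and $K_B\ge 1$.

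Two small differences from the paper. Your derivation of the second inequality, via the monotonicity of $t\mapsto t\min(\Delta^k,t/h)$ and Corollary~\ref{cor:uniform norm equivalences}, supplies a step the paper leaves implicit. Your remark about later CG iterates is not needed for this proposition; the paper handles that point separately afterward.
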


\begin{proof}
    In the first step, the search direction is $-\bm{y}_0=-\bm{G}_k$. The step size $\tau^k$ in \eqref{eqn:tauk} is readily verified. Using Lemma \ref{lem:PxBstar gradient identity}, the model difference is
    \begin{align*}
         m_k(\bm{p})-m_k(\bm{0}_{\mathcal{E}})&=\left(\grad J^B(\bm{x}^k),-\tau^k\bm{G}_k\right)_{\bm{x}^k} + \frac{(\tau^k)^2}{2}\left(H^k[\bm{G}_k],\bm{G}_k\right)_{\mathcal{E}}\\
         &=-\tau^k\norm{\left(\bm{P}_{\bm{x}^k}^B\right)^*\grad J(\bm{x}^k)}^2_{\mathcal{E}} + \frac{(\tau^k)^2}{2}\left(H^k[\bm{G}_k],\bm{G}_k\right)_{\mathcal{E}}.
    \end{align*}
    We consider the following three cases:

    \noindent\textbf{Case I:} $\left(H^k[\bm{G}_k],\bm{G}_k\right)_{\mathcal{E}}\le 0$. By Lemma \ref{lem:PxB upper bound}, we have
    \begin{equation*}
        m_k(\bm{p})-m_k(\bm{0}_{\mathcal{E}}) \le -\frac{\Delta^k\norm{\left(\bm{P}_{\bm{x}^k}^B\right)^*\grad J(\bm{x}^k)}^2_{\mathcal{E}}}{\norm{\bm{G}_k}_{\mathcal{E}}} \le -\frac{1}{K_B}\norm{\left(\bm{P}_{\bm{x}^k}^B\right)^*\grad J(\bm{x}^k)}_{\mathcal{E}}\Delta^k.
    \end{equation*}

    \noindent\textbf{Case II:} $\left(H^k[\bm{G}_k],\bm{G}_k\right)_{\mathcal{E}}> 0$ and $\frac{\Delta^k}{\norm{\bm{G}_k}_{\mathcal{E}}}\ge\frac{\norm{\left(\bm{P}_{\bm{x}^k}^B\right)^*\grad J(\bm{x}^k)}_{\mathcal{E}}^2}{\left(H^k\left[\bm{G}_k\right],\bm{G}_k\right)_{\mathcal{E}}}$. Substituting the minimizer and applying Theorem \ref{thm: taylor}, Lemma \ref{lem:PxB upper bound} yields
    \begin{multline*}
        m_k(\bm{p})-m_k(\bm{0}_{\mathcal{E}})
        =-\frac{1}{2} \frac{\norm{\left(\bm{P}_{\bm{x}^k}^B\right)^*\grad J(\bm{x}^k)}^4_{\mathcal{E}}}{\left(H^k\left[\bm{G}_k\right],\bm{G}_k\right)_{\mathcal{E}}}=-\frac{1}{2} \frac{\norm{\left(\bm{P}_{\bm{x}^k}^B\right)^*\grad J(\bm{x}^k)}^4_{\mathcal{E}}}{\left(\hess J^B_{\bm{x}^k}(\bm{0}_{\mathcal{E}})\left[\bm{G}_k\right],\bm{G}_k\right)_{\mathcal{E}}}\\
        \le -\frac{1}{2}\frac{\norm{\left(\bm{P}_{\bm{x}^k}^B\right)^*\grad J(\bm{x}^k)}^4_{\mathcal{E}}}{\norm{\hess J^B_{\bm{x}^k}(\bm{0}_{\mathcal{E}})}\norm{\bm{G}_k}_{\mathcal{E}}^2} \le -\frac{1}{2K_B^2}\frac{\norm{\left(\bm{P}_{\bm{x}^k}^B\right)^*\grad J(\bm{x}^k)}^2_{\mathcal{E}}}{\norm{\hess J^B_{\bm{x}^k}(\bm{0}_{\mathcal{E}})}}.
    \end{multline*}

    \noindent\textbf{Case III:} $\left(H^k[\bm{G}_k],\bm{G}_k\right)_{\mathcal{E}}> 0$ and $\frac{\Delta^k}{\norm{\bm{G}_k}_{\mathcal{E}}}<\frac{\norm{\left(\bm{P}_{\bm{x}^k}^B\right)^*\grad J(\bm{x}^k)}_{\mathcal{E}}^2}{\left(H^k\left[\bm{G}_k\right],\bm{G}_k\right)_{\mathcal{E}}}$. The step is limited by $\Delta^k$. Using Lemma \ref{lem:PxB upper bound}, we obtain
    \begin{align*}
        m_k(\bm{p})-m_k(\bm{0}_{\mathcal{E}})
        &\le-\frac{\Delta^k\norm{\left(\bm{P}_{\bm{x}^k}^B\right)^*\grad J(\bm{x}^k)}^2_{\mathcal{E}}}{\norm{\bm{G}_k}_{\mathcal{E}}}+\frac{1}{2}\frac{\Delta^k\norm{\left(\bm{P}_{\bm{x}^k}^B\right)^*\grad J(\bm{x}^k)}_{\mathcal{E}}^2}{\norm{\bm{G}_k}_{\mathcal{E}}}\\
        &=-\frac{1}{2}\frac{\Delta^k\norm{\left(\bm{P}_{\bm{x}^k}^B\right)^*\grad J(\bm{x}^k)}^2_{\mathcal{E}}}{\norm{\bm{G}_k}_{\mathcal{E}}} \le -\frac{1}{2K_B}\norm{\left(\bm{P}_{\bm{x}^k}^B\right)^*\grad J(\bm{x}^k)}_{\mathcal{E}}\Delta^k.
    \end{align*}
    
   \noindent Collecting these results and recalling that $K_B\ge 1$, we conclude that
    \begin{align*}
        &m_k(\bm{0}_{\mathcal{E}})-m_k(\bm{p})\\
        \ge& \min \left\{\frac{\Delta^k}{K_B}\norm{\left(\bm{P}_{\bm{x}^k}^B\right)^*\grad J(\bm{x}^k)}_{\mathcal{E}},\ \frac{\norm{\left(\bm{P}_{\bm{x}^k}^B\right)^*\grad J(\bm{x}^k)}^2_{\mathcal{E}}}{2K_B^2\norm{\hess J^B_{\bm{x}^k}(\bm{0}_{\mathcal{E}})}},\ \frac{\Delta^k}{2K_B}\norm{\left(\bm{P}_{\bm{x}^k}^B\right)^*\grad J(\bm{x}^k)}_{\mathcal{E}}\right\}\\
        \ge &\frac{1}{2K_B^2}\norm{\left(\bm{P}_{\bm{x}^k}^B\right)^*\grad J(\bm{x}^k)}_{\mE}\min\left(\Delta^k,\frac{\norm{\left(\bm{P}_{\bm{x}^k}^B\right)^*\grad J(\bm{x}^k)}_{\mathcal{E}}}{\norm{\hess J^B_{\bm{x}^k}(\bm{0}_{\mathcal{E}})}}\right).
    \end{align*}
    This completes the proof.
\end{proof}

\noindent Having established the Cauchy decrease condition for the initial step, we now analyze the behavior of the model during the subsequent iterations of the projection-induced CG--Steihaug Algorithm. To this end, we first provide an exact expansion of the local quadratic model.

\begin{lemma}\label{lem:model_expansion}
    For the sequence generated by Algorithm~\ref{alg:projection-induced-CG--Steihaug}, evaluating the local quadratic model \(m_k\) at \(\bm{p}_{j+1} = \bm{p}_j + \alpha_j \bm{d}_j\) yields the exact expansion
    \[
        m_k(\bm{p}_{j+1}) = m_k(\bm{p}_j) + \alpha_j (\bm{n}_j, \bm{d}_j)_{\mathcal{E}} + \frac{1}{2} \alpha_j^2 (H^k \bm{d}_j, \bm{d}_j)_{\mathcal{E}}.
    \]
\end{lemma}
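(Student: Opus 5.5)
The plan is a direct algebraic expansion of the quadratic model along $\bm d_j$, followed by identifying the linear coefficient through the ambient representation of the model derivative.

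First, I will use the ambient form of $m_k$ from \eqref{eqn:model ambient representation}:
\[
m_k(\bm p)=J^B_{\bm{x}^k}(\bm 0_{\mathcal E})+\left(\grad J(\bm{x}^k),\bm p\right)_{\mathcal E}+\tfrac12\left(H^k\bm p,\bm p\right)_{\mathcal E}, \qquad \bm p\in\T_{\bm{x}^k}\mathcal M^B .
\]
Both $\bm p_j$ and $\bm d_j$ lie in $\T_{\bm{x}^k}\mathcal M^B$, since every $\bm y_j$ lies in the range of $\bm P^B_{\bm{x}^k}$ and the recursion uses only linear combinations. Hence this formula applies at $\bm p_{j+1}=\bm p_j+\alpha_j\bm d_j$.

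Next, I will substitute $\bm p_{j+1}$ into the formula and expand using bilinearity. The linear term splits as $(\grad J(\bm{x}^k),\bm p_j)_{\mathcal E}+\alpha_j(\grad J(\bm{x}^k),\bm d_j)_{\mathcal E}$. The quadratic term gives
\[
\tfrac12(H^k\bm p_j,\bm p_j)_{\mathcal E}+\tfrac{\alpha_j}{2}\bigl[(H^k\bm p_j,\bm d_j)_{\mathcal E}+(H^k\bm d_j,\bm p_j)_{\mathcal E}\bigr]+\tfrac{\alpha_j^2}{2}(H^k\bm d_j,\bm d_j)_{\mathcal E}.
\]
The two cross terms coincide because $H^k=\hess J(\bm{x}^k)+\bm\pi_{\bm{x}^k}$ is self-adjoint on $\mathcal E$. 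This holds because $\hess J(\bm{x}^k)$ is self-adjoint, and $\bm\pi_{\bm{x}^k}$ was constructed via Riesz representation of the symmetric form $b_{\bm{x}^k}$. Collecting the terms that do not involve $\alpha_j$ recovers $m_k(\bm p_j)$. The coefficient of $\alpha_j$ is then $(\grad J(\bm{x}^k)+H^k\bm p_j,\bm d_j)_{\mathcal E}$.

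Finally, I will invoke the identity \eqref{eqn:nj direct}, $\bm n_j=\grad J(\bm{x}^k)+H^k\bm p_j$, to rewrite this coefficient as $(\bm n_j,\bm d_j)_{\mathcal E}$. To make the argument self-contained, I will verify \eqref{eqn:nj direct} by induction: it holds at $j=0$ because $\bm p_0=\bm 0_{\mathcal E}$, and the step from $j$ to $j+1$ follows from $\bm n_{j+1}=\bm n_j+\alpha_jH^k\bm d_j$ together with $\bm p_{j+1}=\bm p_j+\alpha_j\bm d_j$.

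There is no real obstacle here. The only points requiring care are the self-adjointness of $H^k$, which merges the cross terms, and the fact that the iterates stay in the tangent space, which justifies using the ambient form of the model.
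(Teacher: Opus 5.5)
Your proposal is correct and follows the paper's own argument. The paper substitutes $\bm p_{j+1}=\bm p_j+\alpha_j\bm d_j$ into the ambient representation \eqref{eqn:model ambient representation}, merges the cross terms using the symmetry of $H^k$, and identifies the linear coefficient through $\bm n_j=\grad J(\bm{x}^k)+H^k\bm p_j$ from \eqref{eqn:nj direct}. Your explicit checks that the iterates stay in $\T_{\bm{x}^k}\mathcal M^B$, and your induction for \eqref{eqn:nj direct}, are details the paper takes from the surrounding discussion rather than reproving inside the lemma.
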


\begin{proof}
    By \eqref{eqn:nj direct}, $\bm n_j=\grad J(\bm{x}^k)+H^k\bm p_j$. Substituting \(\bm p_{j+1}=\bm p_j+\alpha_j\bm d_j\) into \eqref{eqn:model ambient representation} and using the symmetry of \(H^k\), we obtain
    \[
    \begin{aligned}
        m_k(\bm{p}_{j+1})
        &= m_k(\bm{p}_j)
        + \alpha_j \left( \grad J(\bm{x}^k) + H^k \bm{p}_j, \bm{d}_j \right)_{\mathcal{E}}
        + \frac{1}{2} \alpha_j^2 \left( H^k \bm{d}_j, \bm{d}_j \right)_{\mathcal{E}}\\
        &=m_k(\bm{p}_j)+\alpha_j(\bm n_j,\bm d_j)_{\mathcal E}
        +\frac12\alpha_j^2(H^k\bm d_j,\bm d_j)_{\mathcal E}.
    \end{aligned}
    \]
    This is the claimed expansion.
\end{proof}

\noindent To evaluate the change in the model at each iteration, we also need to establish a fundamental orthogonality property of the sequence generated by the algorithm.

\begin{lemma}\label{lem:cg_orthogonality}
    For the sequence generated by the Algorithm \ref{alg:projection-induced-CG--Steihaug}, the current residual \(\bm{n}_j\) is orthogonal to the previous search direction \(\bm{d}_{j-1}\), i.e., \((\bm{n}_j, \bm{d}_{j-1})_{\mathcal{E}} = 0\) for all \(j \ge 1\).
\end{lemma}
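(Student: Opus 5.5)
We need to show $(\bm n_j,\bm d_{j-1})_{\mathcal E}=0$ for $j\ge1$. We proceed by induction on $j$. The tools are the recursion $\bm n_j=\bm n_{j-1}+\alpha_{j-1}H^k\bm d_{j-1}$ and the definition of $\alpha_{j-1}$. We also need the link between $(\bm n_{j-1},\bm d_{j-1})_{\mathcal E}$ and $(\bm n_{j-1},\bm y_{j-1})_{\mathcal E}$. By the recursion,
\[
(\bm n_j,\bm d_{j-1})_{\mathcal E}=(\bm n_{j-1},\bm d_{j-1})_{\mathcal E}+\alpha_{j-1}(H^k\bm d_{j-1},\bm d_{j-1})_{\mathcal E}
=(\bm n_{j-1},\bm d_{j-1})_{\mathcal E}+(\bm n_{j-1},\bm y_{j-1})_{\mathcal E},
\]
using $\alpha_{j-1}=(\bm n_{j-1},\bm y_{j-1})_{\mathcal E}/(H^k\bm d_{j-1},\bm d_{j-1})_{\mathcal E}$. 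This quotient is well defined, since the iteration reached step $j$ only if the curvature was positive. So it suffices to prove the auxiliary identity
\[
(\bm n_i,\bm d_i)_{\mathcal E}=-(\bm n_i,\bm y_i)_{\mathcal E}\qquad\text{for every index } i \text{ reached}.
\]

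We prove the auxiliary identity jointly with the claim, by induction on $i$. For $i=0$ it is immediate, since $\bm d_0=-\bm y_0$. For $i\ge1$, we expand $\bm d_i=-\bm y_i+\beta_i\bm d_{i-1}$ to get
\[
(\bm n_i,\bm d_i)_{\mathcal E}=-(\bm n_i,\bm y_i)_{\mathcal E}+\beta_i(\bm n_i,\bm d_{i-1})_{\mathcal E}.
\]
The last term vanishes by the claim at index $i$. Thus the order of the induction is: the auxiliary identity at $i-1$ gives the claim at $i$, and the claim at $i$ gives the auxiliary identity at $i$. This closes the loop.

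The only delicate point is that we never need a Riesz solve or the metric $\widetilde g_{\bm x^k}$ here. The orthogonality is a purely ambient statement about $\bm n_j$, and the projection $\bm P^B_{\bm x^k}(\bm P^B_{\bm x^k})^*$ enters only through $\bm y_i$ in the quantity $(\bm n_i,\bm y_i)_{\mathcal E}$, which appears symmetrically on both sides. We do not even need $(\bm n_i,\bm y_i)_{\mathcal E}=\|\bm y_i\|^2_{\widetilde g_{\bm x^k}}$, by \eqref{eqn:projection induced residual norm}, except to know that $\beta_i$ is well defined, i.e.\ that $(\bm n_{i-1},\bm y_{i-1})_{\mathcal E}>0$. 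That holds whenever the algorithm has not terminated. Aside from this bookkeeping of which indices are actually reached before a return, I expect no genuine obstacle.
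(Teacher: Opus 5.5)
Your proposal is correct and is essentially the paper's proof. Both argue by induction, combining the update $\bm n_j=\bm n_{j-1}+\alpha_{j-1}H^k\bm d_{j-1}$, the definition of $\alpha_{j-1}$, and the identity $(\bm n_{j-1},\bm d_{j-1})_{\mathcal E}=-(\bm n_{j-1},\bm y_{j-1})_{\mathcal E}$, which follows from $\bm d_{j-1}=-\bm y_{j-1}+\beta_{j-1}\bm d_{j-2}$ and the inductive hypothesis; the only difference is that you make this identity an explicit part of a joint induction, whereas the paper derives it inline at each step.
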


\begin{proof}
    For \(j=1\), using \(\bm n_1=\bm n_0+\alpha_0H^k\bm d_0\) and \(\bm d_0=-\bm y_0\), we obtain
    \[
    (\bm n_1,\bm d_0)_{\mathcal E}=(\bm n_0,\bm d_0)_{\mathcal E}+\alpha_0(H^k\bm d_0,\bm d_0)_{\mathcal E}=-(\bm n_0,\bm y_0)_{\mathcal E}+(\bm n_0,\bm y_0)_{\mathcal E}=0.
    \]
    The induction step then follows as below. For $j\ge 2$, taking the inner product of \(\bm{n}_j = \bm{n}_{j-1} + \alpha_{j-1} H^k \bm{d}_{j-1}\) with \(\bm{d}_{j-1}\) gives
    \begin{equation}\label{eqn:ortho_expansion}
        (\bm{n}_j, \bm{d}_{j-1})_{\mathcal{E}} = (\bm{n}_{j-1}, \bm{d}_{j-1})_{\mathcal{E}} + \alpha_{j-1} (H^k \bm{d}_{j-1}, \bm{d}_{j-1})_{\mathcal{E}}.
    \end{equation}
    Using the search direction update \(\bm{d}_{j-1} = -\bm{y}_{j-1} + \beta_{j-1} \bm{d}_{j-2}\) and the inductive hypothesis \((\bm{n}_{j-1}, \bm{d}_{j-2})_{\mathcal{E}} = 0\), we have \((\bm{n}_{j-1}, \bm{d}_{j-1})_{\mathcal{E}} = -(\bm{n}_{j-1}, \bm{y}_{j-1})_{\mathcal{E}}\). Substituting this and the step size definition \(\alpha_{j-1} = \frac{(\bm{n}_{j-1}, \bm{y}_{j-1})_{\mathcal{E}}}{(H^k \bm{d}_{j-1}, \bm{d}_{j-1})_{\mathcal{E}}}\) into \eqref{eqn:ortho_expansion} yields
    \[
        (\bm{n}_j, \bm{d}_{j-1})_{\mathcal{E}} = -(\bm{n}_{j-1}, \bm{y}_{j-1})_{\mathcal{E}} + (\bm{n}_{j-1}, \bm{y}_{j-1})_{\mathcal{E}} = 0.
    \]
    This completes the proof.
\end{proof}

\noindent With the exact model expansion and the orthogonality property in hand, we are now positioned to prove that the local quadratic model strictly decreases at each iteration before termination.

\begin{proposition}\label{prop:cg_strict_decrease}
    Let \(\{\bm{p}_j\}\) be the sequence of iterates generated by Algorithm \ref{alg:projection-induced-CG--Steihaug}. For any iteration \(j\) where the algorithm does not terminate or truncate (that is, \((H^k \bm{d}_j, \bm{d}_j)_{\mathcal{E}} > 0\), the trial step satisfies \(\|\bm p_{j+1}\|_{\bm{x}^k}<\Delta^k\), and the residual stopping test has not terminated the inner iteration), the local quadratic model \(m_k\) strictly decreases, {\it i.e.}, we have
    \begin{equation}\label{eqn:picg-decrease}
        m_k(\bm{p}_{j+1}) < m_k(\bm{p}_j).
    \end{equation}
\end{proposition}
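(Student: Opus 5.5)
The argument rests on the exact expansion of Lemma~\ref{lem:model_expansion} and the orthogonality of Lemma~\ref{lem:cg_orthogonality}. By Lemma~\ref{lem:model_expansion},
\[
m_k(\bm p_{j+1})-m_k(\bm p_j)=\alpha_j(\bm n_j,\bm d_j)_{\mathcal E}+\tfrac12\alpha_j^2(H^k\bm d_j,\bm d_j)_{\mathcal E},
\]
so it suffices to evaluate $(\bm n_j,\bm d_j)_{\mathcal E}$ and then substitute the step size $\alpha_j$.

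\textbf{Step 1: the linear term.} For $j=0$ we have $\bm d_0=-\bm y_0$, hence $(\bm n_0,\bm d_0)_{\mathcal E}=-(\bm n_0,\bm y_0)_{\mathcal E}$. For $j\ge1$ we use $\bm d_j=-\bm y_j+\beta_j\bm d_{j-1}$ together with $(\bm n_j,\bm d_{j-1})_{\mathcal E}=0$ from Lemma~\ref{lem:cg_orthogonality}. In both cases
\[
(\bm n_j,\bm d_j)_{\mathcal E}=-(\bm n_j,\bm y_j)_{\mathcal E}.
\]

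\textbf{Step 2: substitute $\alpha_j$.} Since the curvature test passed, $(H^k\bm d_j,\bm d_j)_{\mathcal E}>0$, and $\alpha_j=(\bm n_j,\bm y_j)_{\mathcal E}/(H^k\bm d_j,\bm d_j)_{\mathcal E}$. Plugging this into the expansion gives
\[
m_k(\bm p_{j+1})-m_k(\bm p_j)=-\frac{(\bm n_j,\bm y_j)_{\mathcal E}^2}{2(H^k\bm d_j,\bm d_j)_{\mathcal E}}.
\]

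\textbf{Step 3: strict positivity of $(\bm n_j,\bm y_j)_{\mathcal E}$.} By \eqref{eqn:projection induced residual norm}, $(\bm n_j,\bm y_j)_{\mathcal E}=\|\bm y_j\|^2_{\widetilde g_{\bm x^k}}$. It remains to show $\bm y_j\neq0$.
\begin{itemize}
\item For $j=0$, this holds because the initial test $\sqrt{(\bm n_0,\bm y_0)_{\mathcal E}}<\mathrm{tol}_1$ was not triggered, so $(\bm n_0,\bm y_0)_{\mathcal E}\ge\mathrm{tol}_1^2>0$.
\item For $j\ge1$, the residual stopping test at the previous step did not fire, so $\sqrt{(\bm n_j,\bm y_j)_{\mathcal E}}>\mathrm{tol}_2\sqrt{(\bm n_0,\bm y_0)_{\mathcal E}}>0$.
\end{itemize}
Since $\widetilde g_{\bm x^k}$ is an inner product (Proposition~\ref{prop:projection induced metric gradient}), this quantity is strictly positive. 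The right-hand side in Step 2 is therefore strictly negative, which proves \eqref{eqn:picg-decrease}.

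The main point of care is Step 1: the orthogonality lemma controls $\bm n_j$ only against $\bm d_{j-1}$, and it is exactly this that cancels the $\beta_j$ term. One also has to confirm that the nontermination hypotheses supply $\bm y_j\ne0$ at index $j$ itself, not only at $j+1$. This follows because $j$ is reached only after the test at step $j-1$ was passed.
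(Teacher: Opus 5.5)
Your proposal is correct and follows the paper's proof essentially step for step. You use the expansion from Lemma~\ref{lem:model_expansion}, derive $(\bm n_j,\bm d_j)_{\mathcal E}=-(\bm n_j,\bm y_j)_{\mathcal E}$ from $\bm d_0=-\bm y_0$ and Lemma~\ref{lem:cg_orthogonality}, substitute $\alpha_j$, and get strict positivity of $(\bm n_j,\bm y_j)_{\mathcal E}$ from the initial test ($j=0$) or the preceding residual test ($j\ge1$); the only cosmetic difference is that you write this quantity as $\|\bm y_j\|^2_{\widetilde g_{\bm x^k}}$ where the paper writes the equal quantity $\|(\bm P_{\bm x^k}^B)^*\bm n_j\|_{\mathcal E}^2$.
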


\begin{proof}
    By Lemma \ref{lem:model_expansion}, the change in the quadratic model is given by
    \begin{equation}\label{eqn:quadratic model difference}
         m_k(\bm{p}_{j+1}) - m_k(\bm{p}_j) = \alpha_j (\bm{n}_j, \bm{d}_j)_{\mathcal{E}} + \frac{1}{2} \alpha_j^2 (H^k \bm{d}_j, \bm{d}_j)_{\mathcal{E}}.       
    \end{equation}
    For \(j=0\), \(\bm d_0=-\bm y_0\), while for \(j\ge1\), \(\bm d_j=-\bm y_j+\beta_j\bm d_{j-1}\) and Lemma~\ref{lem:cg_orthogonality} give
    \begin{equation}\label{eqn:nj=yj}
        (\bm n_j,\bm d_j)_{\mathcal E}
        =
        -(\bm n_j,\bm y_j)_{\mathcal E}.
    \end{equation}
    Moreover, it holds that
    \begin{equation*}
        (\bm n_j,\bm y_j)_{\mathcal E}
        =
        \left\|\left(\bm P_{\bm{x}^k}^B\right)^*\bm n_j\right\|_{\mathcal E}^2
        \ge0.
    \end{equation*}
    If this quantity were zero, the corresponding residual norm in Algorithm~\ref{alg:projection-induced-CG--Steihaug} would be zero and the algorithm would have stopped either at the initial test (\(j=0\)) or at the residual test in the preceding inner iteration (\(j\ge1\)). Hence, for every nonterminated iteration under consideration,
    \((\bm n_j,\bm y_j)_{\mathcal E}>0\). Substituting the identity \eqref{eqn:nj=yj} and $\alpha_j=\frac{(\bm n_j,\bm y_j)_{\mathcal E}}{(H^k\bm d_j,\bm d_j)_{\mathcal E}}$ into the model expansion \eqref{eqn:quadratic model difference} yields
    \[
        m_k(\bm p_{j+1})-m_k(\bm p_j)
        =
        -\frac12\alpha_j(\bm n_j,\bm y_j)_{\mathcal E}<0,
    \]
    because \((H^k\bm d_j,\bm d_j)_{\mathcal E}>0\) implies \(\alpha_j>0\).
\end{proof}

\begin{proposition}\label{prop:cg_truncation_decrease}
    Suppose that the initial residual test in Algorithm~\ref{alg:projection-induced-CG--Steihaug} is not triggered. If the algorithm terminates at an inner iteration by either the nonpositive-curvature condition or the trust-region boundary condition, then the returned truncated step \(\bm p=\bm p_j+\tau\bm d_j\) satisfies
    \[
    m_k(\bm{p}) < m_k(\bm{p}_j).
    \]
\end{proposition}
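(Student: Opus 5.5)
The plan is to study the scalar function $\phi(\tau):=m_k(\bm p_j+\tau\bm d_j)$ for $\tau\ge0$. By Lemma~\ref{lem:model_expansion}, with $\alpha_j$ replaced by a general $\tau$ (the computation there is purely algebraic), we have
\[
\phi(\tau)=m_k(\bm p_j)+\tau(\bm n_j,\bm d_j)_{\mathcal E}+\tfrac12\tau^2(H^k\bm d_j,\bm d_j)_{\mathcal E}.
\]
As in the proof of Proposition~\ref{prop:cg_strict_decrease}, the identity \eqref{eqn:nj=yj} combined with Lemma~\ref{lem:cg_orthogonality} gives $(\bm n_j,\bm d_j)_{\mathcal E}=-(\bm n_j,\bm y_j)_{\mathcal E}=-\|(\bm P_{\bm x^k}^B)^*\bm n_j\|_{\mathcal E}^2$. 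This quantity is strictly positive in magnitude, because the initial test was not triggered for $j=0$ and the residual test did not stop the preceding iteration for $j\ge1$. Hence $\phi'(0)<0$, so $\bm d_j$ is a strict descent direction of the model at $\bm p_j$.

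We then treat the two truncation cases separately.

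\textbf{Nonpositive curvature.} If $(H^k\bm d_j,\bm d_j)_{\mathcal E}\le0$, then $\phi(\tau)-\phi(0)\le-\tau(\bm n_j,\bm y_j)_{\mathcal E}<0$ for every $\tau>0$. We must check that the boundary step taken has $\tau>0$. Since $\|\bm p_j\|_{\bm x^k}<\Delta^k$ (the previous step was not truncated, and $\bm p_0=\bm 0_{\mathcal E}$), the equation $\|\bm p_j+\tau\bm d_j\|_{\bm x^k}=\Delta^k$ has exactly one positive root and one negative root. The rule "minimizes \eqref{TR}" picks the root with the smaller $\phi$ value. On the negative root $\phi$ may be smaller only if the linear term works against it. However, for the positive root $\tau_+$ we have $\phi(\tau_+)<\phi(0)$, and therefore the selected $\tau$ satisfies $\phi(\tau)\le\phi(\tau_+)<\phi(0)=m_k(\bm p_j)$ in either case.

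\textbf{Boundary hit with positive curvature.} Here $\phi$ is a strictly convex quadratic with minimizer $\alpha_j>0$, and $\phi(\tau)<\phi(0)$ for all $\tau\in(0,2\alpha_j)$. The returned $\tau\ge0$ solves $\|\bm p_j+\tau\bm d_j\|_{\bm x^k}=\Delta^k$ with $\|\bm p_j\|_{\bm x^k}<\Delta^k\le\|\bm p_j+\alpha_j\bm d_j\|_{\bm x^k}$. By continuity and convexity of $\tau\mapsto\|\bm p_j+\tau\bm d_j\|_{\bm x^k}$, this gives $\tau\in(0,\alpha_j]$, whence $\phi(\tau)<\phi(0)$.

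The main obstacle is the bookkeeping that ensures $\|\bm p_j\|_{\bm x^k}<\Delta^k$ strictly, so that $\tau>0$, and that $(\bm n_j,\bm y_j)_{\mathcal E}>0$. Both follow by induction from the non-termination at earlier inner steps, as already used in Proposition~\ref{prop:cg_strict_decrease}.
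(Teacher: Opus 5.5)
Your proposal is correct and follows essentially the same route as the paper. Both arguments use the univariate quadratic $q(t)=m_k(\bm p_j+t\bm d_j)$ from Lemma~\ref{lem:model_expansion}, the strict descent $q'(0)=-(\bm n_j,\bm y_j)_{\mathcal E}<0$ via \eqref{eqn:nj=yj}, and a split into the nonpositive-curvature and boundary cases, with $0<\tau\le\alpha_j$ in the latter. Your nonpositive-curvature case is slightly more careful than the paper's: the paper simply asserts $\tau>0$, whereas you note that the root selected by the ``minimizes'' rule is at least as good as the positive root $\tau_+$, so the conclusion holds whichever root is chosen.
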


\begin{proof}
    For the current search direction, define the univariate function
    \[
    q(t) := m_k(\bm p_j + t \bm d_j), \quad t \ge 0.
    \] 
    Using Lemma~\ref{lem:model_expansion} with \(t\) in place of \(\alpha_j\), we have
        \[
        q(t)
        =
        m_k(\bm p_j)
        +
        t(\bm n_j,\bm d_j)_{\mathcal E}
        +
        \frac12t^2(H^k\bm d_j,\bm d_j)_{\mathcal E}.
    \]
    By \eqref{eqn:nj=yj}, we have
    \[
        q'(0)=(\bm n_j,\bm d_j)_{\mathcal E}=-(\bm n_j,\bm y_j)_{\mathcal E}<0 .
    \]
    If \((H^k\bm d_j,\bm d_j)_{\mathcal E}\le0\), then \(q'(t)\le q'(0)<0\) for all \(t\ge0\), so \(q\) is strictly decreasing on \([0,\infty)\). Since \(\bm p_j\) is still inside the trust region before termination, the boundary parameter satisfies \(\tau>0\), and hence \(m_k(\bm p)=q(\tau)<q(0)=m_k(\bm p_j)\).

    \noindent If the algorithm terminates because \(\|\bm p_{j+1}\|_{\bm{x}^k}\ge\Delta^k\), then \((H^k\bm d_j,\bm d_j)_{\mathcal E}>0\) and $\alpha_j=-\frac{q'(0)}{(H^k\bm d_j,\bm d_j)_{\mathcal E}}$ is the minimizer of \(q\) on the search line. The boundary intersection satisfies \(0<\tau\le\alpha_j\), because \(\bm p_j\) is feasible and \(\bm p_{j+1}=\bm p_j+\alpha_j\bm d_j\) leaves the trust region. Therefore \(q'(t)<0\) on \([0,\tau)\), and again \(m_k(\bm p)=q(\tau)<q(0)=m_k(\bm p_j)\).
\end{proof}

\begin{remark}
    At any outer iteration for which the stopping criterion is not satisfied, Algorithm~\ref{alg:projection-induced-CG--Steihaug} is called with nonzero initial projection-induced residual. Its first search direction generates the Cauchy point \(\bm{p} = -\tau^k \bm{G}_k\). Moreover, by Proposition~\ref{prop:cg_strict_decrease}, each subsequent nonterminated inner iteration strictly decreases the model. Hence the trust-region step \(\bm p^k\) returned by Algorithm~\ref{alg:projection-induced-CG--Steihaug} satisfies \(m_k(\bm p^k)\le m_k(\bm p)\). Consequently, it holds that
    \begin{multline*}
        m_k(\bm{0}_{\mathcal{E}})-m_k(\bm{p}^k)\ge m_k(\bm{0}_{\mathcal{E}})-m_k(\bm{p})\\
        \ge\frac{1}{2K_B^2}\norm{\left(\bm{P}_{\bm{x}^k}^B\right)^*\grad J(\bm{x}^k)}_{\mE}\min\left(\Delta^k,\frac{\norm{\left(\bm{P}_{\bm{x}^k}^B\right)^*\grad J(\bm{x}^k)}_{\mathcal{E}}}{\norm{\hess J^B_{\bm{x}^k}(\bm{0}_{\mathcal{E}})}}\right).
    \end{multline*}
    Thus, Algorithm~\ref{alg:projection-induced-CG--Steihaug} returns a step satisfying the Cauchy decrease condition with \(c_1=\frac{1}{2K_B^2}\).
\end{remark}

\subsection{Convergence Analysis for Algorithm \ref{alg:trust-region}}
\label{subsec:tr-convergence}

\noindent In this section, we establish the convergence of Algorithm \ref{alg:trust-region}. Let $\bm{x}^0\in \mathcal{M}^B$ denote the initial point. We define the sublevel set $X$ as
\begin{equation*}
	X := \left\{\bm{x} \in \mathcal{M}^B \mid J^B(\bm{x}) \le J^B(\bm{x}^0)\right\}\subset \mathcal{M}^B\subset\mathcal{M}.
\end{equation*}
Given the coercivity of $J$, it follows that $X$ is bounded. Consequently, without loss of generality, we identify this set $X$ with the same one discussed in Section \ref{sec:properties}. Furthermore, we note that $J^B$ is bounded from below on $\mathcal{M}^B$.

\noindent By construction, the sequence $\{\bm{x}^k\}_{k\ge 0}$ generated by Algorithm \ref{alg:trust-region} satisfies the monotonicity property $J^B(\bm{x}^{k+1}) \le J^B(\bm{x}^k)$. Consequently, we conclude that $\{\bm{x}^k\}_{k\ge 0} \subset X$. Based on the properties established in Section \ref{sec:properties}, we derive the following convergence result:

\begin{theorem}
	Let \(\{\bm{x}^k\}_{k\ge 0}\) be a sequence of iterates generated by Algorithm \ref{alg:trust-region} with \(\tilde{r} \in \Big[0, \frac{1}{4}\Big)\). Suppose that all $\bm{p}^k$'s obtained in Algorithm \ref{alg:trust-region} satisfy the Cauchy decrease inequality 
	\begin{equation*}
	    {m}_{k}(\bm{0}_{\mathcal{E}})-{m}_{k}(\bm{p}^k)\ge c_1\norm{\grad J^B(\bm{x}^k)}_{\mE}\min\left(\Delta^k,\frac{\norm{\grad J^B(\bm{x}^k)}_{\mathcal{E}}}{\norm{\hess J^B_{\bm{x}^k}(\bm{0}_{\mathcal{E}})}}\right)
	\end{equation*}
	for some positive $c_1$. We then have
	\[
	\liminf_{k\to\infty}\norm{\grad J^B(\bm{x}^k)}_{\mathcal{E}}=0.
	\]
\end{theorem}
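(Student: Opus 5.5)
The plan follows the classical global convergence argument for trust-region methods (Absil et al., Theorem 7.4.2), adapted to $\mathcal{M}^B$ through the uniform estimates of Section~\ref{sec:properties} on the bounded sublevel set $X$. All iterates lie in $X$, so three uniform bounds are available. Theorem~\ref{thm:boundedness of DJ and D^2J} gives $\|\hess J^B_{\bm{x}^k}(\bm{0}_{\mathcal{E}})\|\le\beta_H$. Proposition~\ref{prop:LC1 second order property}, together with the lower estimate obtained from the same radial Lipschitz argument in \eqref{eqn:LC1 second order property part}, gives
\[
\bigl|J^B_{\bm{x}^k}(\bm p)-J^B(\bm{x}^k)-(\grad J^B(\bm{x}^k),\bm p)_{\bm{x}^k}\bigr|\le\tfrac{\beta_{RL}}{2}\|\bm p\|^2
\]
whenever $\|\bm p\|\le\Delta_{RL}$. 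Since $\Delta^k\le\bar\Delta\le\Delta_{RL}$, this estimate holds for every trial step.

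The first step is to bound the discrepancy between the actual and predicted reduction. The constant and linear terms of $m_k$ cancel against the corresponding terms of $J^B_{\bm{x}^k}$. Combining the estimate above with the uniform Hessian bound, I would obtain
\[
|J^B_{\bm{x}^k}(\bm p^k)-m_k(\bm p^k)|\le\tfrac12(\beta_{RL}+\beta_H)\|\bm p^k\|^2\le\tfrac12(\beta_{RL}+\beta_H)(\Delta^k)^2.
\]
It follows that
\[
|\rho_k-1|\le\frac{(\beta_{RL}+\beta_H)(\Delta^k)^2}{2\,(m_k(0)-m_k(\bm p^k))}.
\]

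The main argument is by contradiction. Suppose there are $\epsilon>0$ and $K$ with $\|\grad J^B(\bm{x}^k)\|\ge\epsilon$ for all $k\ge K$. The Cauchy decrease hypothesis then gives
\[
m_k(0)-m_k(\bm p^k)\ge c_1\epsilon\min\{\Delta^k,\epsilon/\beta_H\}.
\]
Whenever $\Delta^k\le\hat\Delta:=\min\{\epsilon/\beta_H,\;c_1\epsilon/(\beta_{RL}+\beta_H)\}$, the two bounds combine to give $|\rho_k-1|\le 1/2$, so $\rho_k\ge 1/2>1/4$ and the radius is not reduced. Because the radius is only ever reduced by the factor $1/4$, this shows $\Delta^k\ge\min\{\Delta^K,\hat\Delta/4\}$ for all $k\ge K$.

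Next, there must be infinitely many successful iterations with $\rho_k\ge1/4>\tilde r$. If there were only finitely many, then eventually $\rho_k<1/4$ at every step and $\Delta^k\to0$, contradicting the lower bound just established. On each such successful iteration,
\[
J^B(\bm{x}^k)-J^B(\bm{x}^{k+1})\ge\tfrac14 c_1\epsilon\min\{\Delta^k,\epsilon/\beta_H\},
\]
and the right-hand side is bounded below by a positive constant. Summing over infinitely many such iterations contradicts the lower boundedness of $J^B$ together with the monotone decrease of $J^B(\bm{x}^k)$. Hence $\liminf_{k\to\infty}\|\grad J^B(\bm{x}^k)\|=0$.

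The main obstacle I expect is the first step, specifically the two-sided radial estimate. The paper states Proposition~\ref{prop:LC1 second order property} only as an upper bound, but the proof in \eqref{eqn:LC1 second order property part} actually establishes the absolute value, so I would cite that. A further point to handle is the degenerate case $\hess J^B_{\bm{x}^k}(\bm{0}_{\mathcal{E}})=0$, where the Cauchy decrease bound should be read with the ratio $\|\grad J^B(\bm{x}^k)\|/\|\hess J^B_{\bm{x}^k}(\bm{0}_{\mathcal{E}})\|$ interpreted as $+\infty$. Replacing the Hessian norm by $\beta_H$ handles this case safely.
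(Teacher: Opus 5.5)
Your proposal is correct and takes the same route as the paper. The paper also combines the radial Lipschitz estimate with the uniform Hessian bound $\beta_H$ on the sublevel set $X$ to get $|m_k(\bm p^k)-J^B_{\bm x^k}(\bm p^k)|\le\frac12(\beta_H+\beta_{RL})\|\bm p^k\|_{\bm x^k}^2$ whenever $\|\bm p^k\|_{\bm x^k}\le\bar\Delta\le\Delta_{RL}$, and it then invokes the proof of Absil et al., Theorem~7.4.2, verbatim; your contradiction argument (the radius lower bound $\min\{\Delta^K,\hat\Delta/4\}$, infinitely many successful steps, and the summed decrease against the lower bound on $J^B$) is exactly that cited argument written out, and you are right that the absolute-value form in \eqref{eqn:LC1 second order property part}, not just the one-sided statement of the proposition, is what the model-accuracy bound needs.
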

\begin{proof}
By construction of Algorithm~\ref{alg:trust-region}, the sequence \(\{J^B(\bm{x}^k)\}\) is nonincreasing. Hence, all iterates remain in the bounded sublevel set \(X\), and the uniform estimates established in Section~\ref{sec:properties} apply.

\noindent The Cauchy decrease condition is assumed in the statement. By Theorem~\ref{thm:radially lipschitz}, \(J_{\bm{x}}^B\) is radially Lipschitz-\(C^1\) on \(X\) with constants \(\beta_{RL}\) and \(\Delta_{RL}\). Moreover, Theorem~\ref{thm:boundedness of DJ and D^2J} implies that, for some \(\beta_H>0\), we have
\[
    \left|
    \left(
        \hess J_{\bm{x}}^B(\bm{0}_{\mathcal E})[\bm v],\bm v
    \right)_{\mathcal E}
    \right|
    \le
    \beta_H\|\bm v\|_{\mathcal E}^2,
    \qquad
    \forall\,\bm{x}\in X .
\]
Therefore, setting \(\varrho:=\max\{\beta_H,\beta_{RL}\}\), for every \(\|\bm p^k\|_{\bm{x}^k}\le\Delta^k\le\bar\Delta\le\Delta_{RL}\), we obtain
\begin{equation}\label{eqn:mk-Jk}
    \left|
        m_k(\bm p^k)-J_{\bm{x}^k}^B(\bm p^k)
    \right|
    \le
    \frac12(\beta_H+\beta_{RL})\|\bm p^k\|_{\bm{x}^k}^2
    \le
    \varrho\|\bm p^k\|_{\bm{x}^k}^2 .
\end{equation}
Thus, the model-accuracy estimate required in the standard trust-region analysis holds uniformly.

\noindent Consequently, all hypotheses used in the proof of \cite[Theorem~7.4.2]{absil2008optimization} are satisfied, with \(f=J^B\), \(R=\bm R^B\), and \(m_k\) as in \eqref{eqn:qudratic model mk}. Since the remaining argument relies only on the estimates above and the trust-region radius update rules, it applies verbatim in the present setting. The remaining radius argument is unchanged and therefore gives the asserted liminf convergence. This completes the proof.
\end{proof}

\section{Conclusions}

\noindent In this paper, we established theoretical foundations for the optimization problem on an affine-transversal submanifold in a Hilbert space. The central viewpoint was to treat the coupled feasible set as a single geometric object rather than handling the nonlinear manifold constraint and the affine equality constraint separately. We established the Hilbert-submanifold structure of the feasible set, characterized its tangent spaces, and constructed the associated projection, retraction, and derivative tools. These results provide the theoretical basis for feasibility-preserving algorithmic design directly in function spaces and enable the systematic use of classical finite-dimensional optimization techniques for optimization problems on affine-transversal submanifolds in Hilbert spaces.

\noindent A noteworthy aspect of our work is the variable-metric interpretation of the projection-induced gradient, which enables the use of the computationally convenient non-orthogonal projection in the algorithms, without requiring any special structure or complicated procedure to construct an orthogonal projection. Under the projection-induced metric introduced in Section~\ref{sec:projection-induced gradient}, the vector \(\bm G_{\bm x}=\bm{P}_{\bm{x}}^B(\bm{P}_{\bm{x}}^B)^*\grad J(\bm{x})\) is the exact Riemannian gradient of \(J^B\). The uniform equivalence of the induced norms ensures that \(\bm G_{\bm x}\) is uniformly comparable in norm to the ambient-induced Riemannian gradient \(\grad J^B(\bm x)\), and their ambient inner product admits a uniform positive lower bound in terms of the product of their norms. The projection-induced CG--Steihaug step can also be implemented using explicit projection-induced gradient and residual formulas, without an additional tangent-space Riesz solve, while still satisfying the Cauchy decrease property needed for convergence.

\bibliographystyle{siamplainmc}
\bibliography{reference.bib}

\end{document}